\definecolor{Gray}{gray}{0.9}
\begin{document}
\selectfont
\define\div{\big|}
\define\ndiv{\not\div}
\define\HG{\operatorname{Hol}(G)}
\define\NHG{\operatorname{NHol}(G)}
\define\QHG{\operatorname{QHol}(G)}
\define\XHG{\operatorname{XHol}(G)}
\define\QHN{\operatorname{QHol}(N)}
\define\QHC{\operatorname{QHol}(C_{p^n})}
\define\lG{\lambda(G)}
\define\lg{\lambda(g)}
\define\rG{\rho(G)}
\define\lx{\lambda(x)}
\define\lxi{\lambda(x)^{-1}}
\define\lt{\lambda(t)}
\define\lti{\lambda(t)^{-1}}
\define\cg{\cellcolor{Gray}}
\define\conj[#1][#2]{#2#1#2^{-1}}
\define\xconj[#1][#2]{#2^{-1}#1#2}
\define\zetabar{\overline{\zeta}}
\define\half{\mathfrak{h}}
\define\ds{\displaystyle}
\def\keywords#1{\def\@keywords{#1}}
\parskip=0.125in
\title{Mutually Normalizing Regular Permutation Groups and Zappa-Sz\'ep Extensions of the Holomorph}
\date{\today}
\author{Timothy Kohl\\
Department of Mathematics and Statistics\\
Boston University\\
Boston, MA 02215\\
tkohl@math.bu.edu}
\maketitle
\begin{abstract}
For a group $G$, embedded in its group of permutations $B=Perm(G)$ via the left regular representation $\lambda:G\rightarrow B$, the normalizer of $\lambda(G)$ in $B$ is $\operatorname{Hol}(G)$, the holomorph of $G$. The set $\mathcal{H}(G)$ of those regular $N\leq \operatorname{Hol}(G)$ such that $N\cong G$ and $\operatorname{Norm}_B(N)=\operatorname{Hol}(G)$ is keyed to the structure of the so-called multiple holomorph of $G$, $N\!Hol(G)=\operatorname{Norm}_B(\operatorname{Hol}(G))$, in that $\mathcal{H}(G)$ is the set of conjugates of $\lambda(G)$ by $N\!Hol(G)$. We wish to generalize this by considering a certain set $\mathcal{Q}(G)$ consisting of regular subgroups $M\leq \operatorname{Hol}(G)$, where $M\cong G$, that contains $\mathcal{H}(G)$ with the property that its members mutually normalize each other. This set will generally give rise to a group $Q\!\operatorname{Hol}(G)$ which we will call the quasi-holomorph of $G$, where the orbit of $\lambda(G)$ under $Q\!\operatorname{Hol}(G)$ is $\mathcal{Q}(G)$. The multiple holomorph is a group extension of $\operatorname{Hol}(G)$ and the quasi-holomorph will contain $N\!Hol(G)$, but, when larger than $N\!Hol(G)$, is frequently a Zappa-Sz\'ep product with the holomorph.
\end{abstract}
\noindent {\it Key words:} regular subgroup, holomorph, multiple holomorph, Zappa-Sz\'ep product\par
\noindent {\it MSC:} 20B35, 20N05\par
\renewcommand{\thefootnote}{}
\section{Introduction}
Throughout this discussion, all groups will be assumed to be finite. For a group $G$, the holomorph $\operatorname{Hol}(G)$ is a classical object that can be defined in two different ways. The first is as the semi-direct product
$$
G\rtimes \operatorname{Aut}(G)
$$
where $(g,\alpha)(h,\beta)=(g\alpha(h),\alpha\beta)$ with $G$ embedded as the normal subgroup $\hat{G}=\{(g,I)\ |\ g\in G\}$. The other formulation involves the left regular representation of $G$. If $B=Perm(G)$ then $\lambda:G\rightarrow B$ given by $\lambda(g)(h)=gh$ is an embedding and one defines the holomorph of $G$ to be the normalizer $\operatorname{Norm}_B(\lambda(G))$. This normalizer is isomorphic to the aforementioned semi-direct product, by virtue of the following result \cite[Theorem 6.3.2]{Hall1959} which can be found in Hall's book which we paraphrase here.
\begin{theorem}
For $G$ embedded in $B=Perm(G)$ as $\lambda(G)$
$$
\operatorname{Hol}(G)=\operatorname{Norm}_B(\lambda(G))=\rho(G)A(G)
$$
where $\rho:G\rightarrow B$ is given by $\rho(g)(h)=hg^{-1}$ (the right regular representation) and $A(G)=\{\pi\in \operatorname{Hol}(G)\ |\ \pi(id_G)=id_G\}=\operatorname{Aut}(G)$ namely those elements of the normalizer which fix the identity element of $G$.
\end{theorem}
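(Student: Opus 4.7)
The plan is to prove $\operatorname{Norm}_B(\lambda(G)) = \rho(G) A(G)$ by mutual inclusion, and then separately identify $A(G)$ with $\operatorname{Aut}(G)$.

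For the inclusion $\rho(G) A(G) \subseteq \operatorname{Norm}_B(\lambda(G))$, I would first exploit the elementary fact that left and right multiplications commute on any group: $\lambda(g)\rho(h)(x) = gxh^{-1} = \rho(h)\lambda(g)(x)$. So $\rho(G)$ in fact \emph{centralizes} $\lambda(G)$, hence certainly normalizes it. Then, for any $\alpha \in \operatorname{Aut}(G)$, a one-line calculation shows $\alpha \lambda(g) \alpha^{-1}(x) = \alpha(g\alpha^{-1}(x)) = \alpha(g)x = \lambda(\alpha(g))(x)$, so $\alpha \lambda(g)\alpha^{-1} = \lambda(\alpha(g)) \in \lambda(G)$, and $\operatorname{Aut}(G)$ sits in the normalizer as well.

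For the reverse inclusion, I would use a translation trick. Given $\pi \in \operatorname{Norm}_B(\lambda(G))$, let $x = \pi(\operatorname{id}_G)$; since $\rho(x)(x) = x\cdot x^{-1} = \operatorname{id}_G$, the composite $\rho(x)\pi$ fixes $\operatorname{id}_G$, and it still normalizes $\lambda(G)$ because $\rho(x)$ does. This places $\rho(x)\pi$ in $A(G)$ and factors $\pi = \rho(x^{-1}) \cdot (\rho(x)\pi) \in \rho(G) A(G)$. To finish, I would show $A(G) = \operatorname{Aut}(G)$: for $\alpha \in A(G)$, normalization produces a bijection $f:G\to G$ with $\alpha\lambda(g)\alpha^{-1} = \lambda(f(g))$; evaluating both sides at $\operatorname{id}_G$ and using $\alpha(\operatorname{id}_G)=\operatorname{id}_G$ forces $f(g) = \alpha(g)$, and then $\alpha(gh) = \alpha\lambda(g)(h) = \lambda(\alpha(g))\alpha(h) = \alpha(g)\alpha(h)$ proves $\alpha$ is a homomorphism. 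The reverse containment $\operatorname{Aut}(G) \subseteq A(G)$ is automatic from the previous paragraph.

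Nothing here is deep, but the one step requiring real care is the identification $A(G) = \operatorname{Aut}(G)$, because a priori an element of $A(G)$ is only a set-theoretic permutation of $G$, and one must squeeze the group-homomorphism property out of just two hypotheses: that it fixes the identity and that it conjugates $\lambda(G)$ to itself. Once that bookkeeping is arranged, everything else is direct manipulation of left- and right-multiplication operators.
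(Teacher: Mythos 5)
Your proof is correct and complete; the paper itself offers no proof of this statement, simply citing it as Theorem 6.3.2 of Hall's book, and your argument is essentially the standard one found there. The decomposition $\pi=\rho(x^{-1})\cdot(\rho(x)\pi)$ with $x=\pi(\operatorname{id}_G)$, together with the extraction of the homomorphism property of an identity-fixing normalizing permutation from $\alpha\lambda(g)\alpha^{-1}=\lambda(\alpha(g))$, is exactly the right bookkeeping, and all the individual computations check out.
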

There are a number of important observations to make. The first is that both $\lambda(G)$ and $\rho(G)$ are embedded in $B$ as {\it regular} subgroups, namely they act transitively and the stabilizer of any $g\in G$ is the identity. Also, $\rho(G)$ is clearly normalized by $\operatorname{Aut}(G)$. Since the stabilizer of any element is trivial, in the above formulation, $\operatorname{Hol}(G)$ is very naturally a split extension of $\rho(G)$ by $A(G)$ since $\rho(G)\cap A(G)=\{id_B\}$. One should also note that $\lambda(G)$ and $\rho(G)$ clearly centralize each other. 
Moreover, if we have the inner automorphism $c_g$ for $g\in G$ then $c_g[x]=\lambda(g)\rho(g)[x]$. As such, if $\alpha\in \operatorname{Aut}(G)$ then $$\rho(g)\circ\alpha[x]=\lambda(g^{-1})\circ(c_g\circ\alpha)[x]$$ for each $x\in G$. The end result is that $\operatorname{Hol}(G)=\rho(G)\operatorname{Aut}(G)=\lambda(G)\operatorname{Aut}(G)$ which means that $\operatorname{Hol}(G)$ could equally well be defined to be the normalizer of $\rho(G)$. Going further, since $\lambda$ and $\rho$ are injective then clearly $\lambda(G)\cong\rho(G)$, although they are not necessarily equal, unless $G$ is abelian. Indeed, it is not difficult to verify that $\lambda(G)\cap\rho(G)\cong Z(G)$.\par
Beyond $\lambda(G)$ and $\rho(G)$ one can consider what other regular subgroups of $B$, isomorphic to $G$, have the same normalizer. 
\begin{definition}
$$
\mathcal{H}(G)=\{N\leq B\ |N\text{ is regular}, N\cong G\text{, and }\operatorname{Norm}_B(N)=\operatorname{Hol}(G)\}$$
\end{definition}
We note that if $N\in\mathcal{H}(G)$ then one must have $N\triangleleft\operatorname{Hol}(G)$, so, in particular $N$ is a subgroup of $\operatorname{Hol}(G)$.\par
\noindent The key fact we shall use, here, and throughout, is the following fact about isomorphic regular subgroups.
\begin{proposition}\cite[p. 427]{Dixon1971} 
\label{magic}
If $N,N'$ are regular subgroups of $S_n$ which are isomorphic (as abstract groups) then they are, in fact, conjugate as subgroups of $S_n$.
\end{proposition}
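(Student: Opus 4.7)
The plan is to exploit the fact that any regular subgroup of $S_n$ is permutation-isomorphic, in a very canonical way, to the left regular representation of itself, so that an abstract isomorphism $N \cong N'$ can be promoted to a conjugating element of $S_n$.

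First, I would fix the set on which $S_n$ acts, say $X = \{1,\dots,n\}$, and pick a base point $x_0 \in X$. For any regular subgroup $N \leq S_n$, the evaluation map $\phi_N : N \to X$ given by $\phi_N(g) = g(x_0)$ is a bijection: surjectivity is transitivity and injectivity is triviality of point-stabilizers. Moreover this bijection intertwines the action of $N$ on $X$ with the left regular action of $N$ on itself, since for $g,h \in N$ we have $g \cdot \phi_N(h) = g(h(x_0)) = (gh)(x_0) = \phi_N(gh)$. The same construction applied to $N'$ yields a bijection $\phi_{N'} : N' \to X$ with the analogous intertwining property.

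Next, given an abstract isomorphism $\psi : N \to N'$, I would form the permutation
\[
\sigma \;=\; \phi_{N'} \circ \psi \circ \phi_N^{-1} \;\in\; \operatorname{Sym}(X) = S_n.
\]
The claim is then that $\sigma N \sigma^{-1} = N'$, and in fact conjugation by $\sigma$ realizes $\psi$ pointwise: $\sigma g \sigma^{-1} = \psi(g)$ for every $g \in N$. To verify this, I would take an arbitrary $x \in X$, write $x = \phi_{N'}(h')$ for a unique $h' \in N'$, and compute both sides using the intertwining property from the previous step together with the fact that $\psi$ is a homomorphism; the key identity is $\psi(g)\cdot h' = \psi(g \cdot \psi^{-1}(h'))$, after which things telescope.

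I do not expect a serious obstacle: once the canonical identification of a regular subgroup with its left regular representation is in hand, the whole argument is bookkeeping. The only thing one must be careful about is the order of composition in the definition of $\sigma$, so that conjugation by $\sigma$ (rather than by $\sigma^{-1}$) carries $N$ to $N'$; reversing $\psi$ merely swaps the roles of $N$ and $N'$, so either convention works.
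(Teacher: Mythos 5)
Your argument is correct and complete: the evaluation bijections $\phi_N,\phi_{N'}$ at a base point identify each regular subgroup with its own left regular representation, and the computation $\sigma g\sigma^{-1}=\psi(g)$ goes through exactly as you sketch. The paper offers no proof of its own here --- it simply cites Dixon --- and the argument you give is precisely the standard one found in that reference (equivalence of regular permutation representations of isomorphic groups), so there is nothing to reconcile.
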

If now $N\leq B$ is regular and isomorphic to $G$ where $\operatorname{Norm}_B(N)=\operatorname{Hol}(G)$, then there must exist $\beta\in B$ such that $\beta\lambda(G)\beta^{-1}=N$. What this implies is that 
$$\beta \operatorname{Norm}_B(\lambda(G))\beta^{-1}=\operatorname{Norm}_B(\beta\lambda(G)\beta^{-1})=\operatorname{Norm}_B(N)$$
since the conjugate of the normalizer is the normalizer of the conjugate.  But since $\operatorname{Norm}_B(N)=\operatorname{Norm}_B(\lambda(G))$ by assumption, then $\beta$ actually normalizes $\operatorname{Hol}(G)$ itself as a subgroup of $B$. This leads to the following.
\begin{definition}
For $G$ a group, the {\it multiple holomorph} of $G$ is 
$$\NHG=\operatorname{Norm}_B(\operatorname{Hol}(G))$$
the normalizer of the holomorph.
\end{definition}
The multiple holomorph allows one to determine those regular subgroups of $\operatorname{Hol}(G)$, isomorphic to $G$, whose normalizer is $\operatorname{Hol}(G)$. In particular if $\beta$ normalizes $\operatorname{Hol}(G)$ then the fact that $\beta Norm_B(\lambda(G))\beta^{-1}=Norm_B(\beta\lambda(G)\beta^{-1})$ yields the following, which Miller established in \cite{Miller1908}.
\begin{theorem}
\label{NHG}
For $G$ a group, $T(G)=\NHG/\operatorname{Hol}(G)$ acts as a regular permutation group on $\mathcal{H}(G)$.
\end{theorem}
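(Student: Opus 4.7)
The plan is to exhibit the natural action of $\NHG$ on $\mathcal{H}(G)$ by conjugation and then verify three things in turn: it is well-defined on the coset space $T(G)=\NHG/\HG$, it is transitive, and the stabilizer in $T(G)$ of any point is trivial. Taken together these give a regular action.

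First I would check that conjugation by any $\beta\in\NHG$ sends $\mathcal{H}(G)$ to itself. If $N\in\mathcal{H}(G)$ then $\beta N\beta^{-1}$ is clearly regular and isomorphic to $G$, and its normalizer in $B$ is
$$
\operatorname{Norm}_B(\beta N\beta^{-1}) = \beta\operatorname{Norm}_B(N)\beta^{-1} = \beta\HG\beta^{-1} = \HG,
$$
where the last equality uses $\beta\in\NHG$. Hence $\beta N\beta^{-1}\in\mathcal{H}(G)$. To see that this action descends to $T(G)$, I would invoke the remark made right after the definition of $\mathcal{H}(G)$: every $N\in\mathcal{H}(G)$ is normal in $\HG$, so conjugation by any element of $\HG$ fixes each element of $\mathcal{H}(G)$ pointwise.

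Transitivity would come directly from Proposition \ref{magic} together with the calculation already rehearsed just before the definition of $\NHG$. Given any $N\in\mathcal{H}(G)$, since $N$ and $\lambda(G)$ are both regular subgroups of $B$ isomorphic to $G$, there exists $\beta\in B$ with $\beta\lambda(G)\beta^{-1}=N$; then $\beta\HG\beta^{-1}=\operatorname{Norm}_B(N)=\HG$ forces $\beta\in\NHG$, so the orbit of $\lambda(G)$ under $\NHG$ is all of $\mathcal{H}(G)$.

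Finally, for freeness, the stabilizer in $\NHG$ of $\lambda(G)$ is exactly $\NHG\cap\operatorname{Norm}_B(\lambda(G)) = \NHG\cap\HG = \HG$, so the induced stabilizer in $T(G)$ is trivial and the action is regular. The only substantive ingredient is Proposition \ref{magic}; everything else is just tracking the definitions of $\mathcal{H}(G)$ and $\NHG$ together with the tautology $\beta\operatorname{Norm}_B(X)\beta^{-1}=\operatorname{Norm}_B(\beta X\beta^{-1})$, so I do not anticipate any serious obstacle.
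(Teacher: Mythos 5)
Your proposal is correct and follows essentially the same route the paper takes: the paper gives no formal proof (citing Miller), but the surrounding discussion uses exactly your ingredients — Proposition \ref{magic} to get transitivity, the identity $\beta\operatorname{Norm}_B(X)\beta^{-1}=\operatorname{Norm}_B(\beta X\beta^{-1})$ to see that the conjugator lies in $\NHG$, and normality of each $N\in\mathcal{H}(G)$ in $\operatorname{Hol}(G)$ to descend to $T(G)$ with trivial stabilizer. Your write-up just makes the well-definedness, transitivity, and freeness steps explicit.
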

\noindent What this implies is that 
\begin{align*}
\mathcal{H}(G)&=\{N\leq B\ |N\text{ is regular}, N\cong G\text{, and }\operatorname{Norm}_B(N)=\operatorname{Hol}(G)\}\\
              &=\{N\triangleleft \operatorname{Hol}(G)\ |\ N\text{ regular }, N\cong G\}\\
              &=\{\beta_i\lambda(G)\beta_i^{-1}\ |\ \beta_i\operatorname{Hol}(G)\in T(G)\} \\
\end{align*}
so that $\mathcal{H}(G)$ is the orbit $\lG$ under conjugation, with $T(G)$ acting regularly. There are instances when $\mathcal{H}(G)$ is exactly $\{\lambda(G),\rho(G)\}$ such as when $G$ is a non-abelian simple group, as shown in \cite[Theorem 4]{CarnahanChilds1999}. Also, for abelian groups, Miller \cite{Miller1908} showed that $\mathcal{H}(G)=\{\lambda(G)\}$ if $8\ndiv |G|$ which implies that $\NHG=\operatorname{Hol}(G)$. This was also established by Dalla Volta and Caranti in \cite{Caranti2017} using modern methods. Beyond these, $T(G)$ and $\mathcal{H}(G)$ have been computed for various classes of groups, such as finitely generated abelian groups in \cite{Mills1951}, dihedral and quaternionic in \cite{Kohl-Multiple-2014}, finite class 2 $p$-groups in \cite{Caranti-P-Group}, and almost simple groups in \cite{Tsang-Multiple-NYJM}, and others.\par
However, we are interested in a class of groups that contains $\mathcal{H}(G)$ but is frequently larger, and to represent this class as the orbit of $\lG$ under the action of a certain group, which contains $\operatorname{Hol}(G)$ as the stabilizer subgroup. The condition we will be exploring that defines this class is the property of being mutually normalizing.
\section{Normalizing vs Being Normalized}
For a given $G$ we shall start by considering two fundamental classes of regular permutation groups.
\begin{definition}
\begin{align*}
\mathcal{S}(G)&=\{N\leq B\ |\ N\text{ regular }, N\cong G\text{ and }N\leq \operatorname{Hol}(G)\}\\
\mathcal{R}(G)&=\{N\leq B\ |\ N\text{ regular }, N\cong G\text{ and }\lambda(G)\leq \operatorname{Norm}_B(N)\}\\
\end{align*}
\end{definition}
Of particular interest is the intersection $\mathcal{S}(G)\cap\mathcal{R}(G)$ in that it consists of those regular subgroups of $\operatorname{Hol}(G)$ which normalize and are normalized by $\lG$. One immediate consequence of this definition is the following basic observation.
\begin{lemma}
\label{SRcontainsH}
For $\mathcal{S}(G)$, $\mathcal{R}(G)$ as defined above, one has that $\mathcal{H}(G)\subseteq\mathcal{S}(G)\cap\mathcal{R}(G)$
\end{lemma}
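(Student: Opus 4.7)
The statement is essentially a tautological unpacking of the three definitions, so the plan is simply to verify the two inclusion conditions that define $\mathcal{S}(G)\cap\mathcal{R}(G)$ directly from the defining property of $\mathcal{H}(G)$. Fix $N\in\mathcal{H}(G)$; by definition $N$ is regular and isomorphic to $G$, so the only content is to establish (i) $N\leq \operatorname{Hol}(G)$ and (ii) $\lambda(G)\leq \operatorname{Norm}_B(N)$.

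For (i), I would note that any subgroup is contained in its own normalizer, so $N\leq \operatorname{Norm}_B(N)$, and by the hypothesis that $N\in\mathcal{H}(G)$ the right-hand side equals $\operatorname{Hol}(G)$. This puts $N$ inside $\operatorname{Hol}(G)$ and verifies membership in $\mathcal{S}(G)$. (The earlier remark in the paper that ``$N\in\mathcal{H}(G)$ implies $N\triangleleft\operatorname{Hol}(G)$'' is actually the stronger observation of which this is the weaker consequence.)

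For (ii), the symmetric observation works: $\lambda(G)\leq\operatorname{Norm}_B(\lambda(G))=\operatorname{Hol}(G)$ by definition of the holomorph, and $\operatorname{Hol}(G)=\operatorname{Norm}_B(N)$ by assumption, so chaining these gives $\lambda(G)\leq \operatorname{Norm}_B(N)$, which is precisely membership in $\mathcal{R}(G)$.

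There is no real obstacle here; the lemma is a bookkeeping statement meant to position $\mathcal{H}(G)$ inside the larger framework $\mathcal{S}(G)\cap\mathcal{R}(G)$ that the paper intends to study. The only thing worth emphasizing in the write-up is that both inclusions follow from the single identity $\operatorname{Norm}_B(N)=\operatorname{Hol}(G)=\operatorname{Norm}_B(\lambda(G))$, one applied to $N$ (to get $N\leq \operatorname{Norm}_B(N)=\operatorname{Hol}(G)$) and the other applied to $\lambda(G)$ (to get $\lambda(G)\leq \operatorname{Hol}(G)=\operatorname{Norm}_B(N)$).
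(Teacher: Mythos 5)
Your proof is correct and follows essentially the same route as the paper: both arguments hinge on the single identity $\operatorname{Norm}_B(N)=\operatorname{Hol}(G)=\operatorname{Norm}_B(\lambda(G))$, using $N\leq\operatorname{Norm}_B(N)$ to place $N$ in $\mathcal{S}(G)$ and $\lambda(G)\leq\operatorname{Hol}(G)$ to place $N$ in $\mathcal{R}(G)$. No gaps; nothing further is needed.
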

\begin{proof}
We observe that if $N\leq \operatorname{Hol}(G)$ has normalizer equal to $\operatorname{Hol}(G)$ then $N\triangleleft \operatorname{Hol}(G)$ so that $\lG\leq \operatorname{Hol}(G)$ normalizes $N$ obviously, but $N\leq \operatorname{Hol}(G)$ means $N$ normalizes $\lG$.
\end{proof}
As reviewed in the previous section, the group $\NHG$ acts transitively on $\mathcal{H}(G)$ where the stabilizer subgroup of $\lG$ is $\operatorname{Hol}(G)$. Moreover, the quotient group $T(G)$ acts regularly on $\mathcal{H}(G)$, and in particular, each coset representative corresponds, of course, to a member of $\mathcal{H}(G)$. We note here, and will explore further later on, $\NHG$ is not necessarily a split extension of $\operatorname{Hol}(G)$ and so there may not exist any set of coset representatives which forms a subgroup of $\NHG$. However, there are many examples where this {\it is} indeed the case. Nonetheless, any set of coset representatives obviously correspond to the conjugates of $\lG$ that comprise $\mathcal{H}(G)$.\par
What we (initially) seek is a subgroup of $B$ analogous to $\NHG$, which acts transitively on $\mathcal{S}(G)\cap\mathcal{R}(G)$, where $\operatorname{Hol}(G)$ is exactly the stabilizer subgroup for $\lG$. However, we will frequently have to restrict our attention to some subset $\mathcal{X}(G)\subseteq\mathcal{S}(G)\cap\mathcal{R}(G)$. This is due to the fact that $\mathcal{S}(G)\cap\mathcal{R}(G)$ need not, as it turns out, be the orbit of $\lG$ under the action of a group containing $\operatorname{Hol}(G)$.\par
To prepare for this, we need to explore the symmetry between $\mathcal{S}(G)$ and $\mathcal{R}(G)$. The one interesting thing to observe is that the $N\in\mathcal{S}(G)$ are all subgroups of $\operatorname{Hol}(G)$ and are therefore readily enumerated if one has enough information about the structure of $\operatorname{Hol}(G)$. In contrast, the set $\mathcal{R}(G)$ is a bit more mysterious in that it consists of groups which are not necessarily subgroups of $\operatorname{Hol}(G)$. However, the 'isomorphic implies conjugate' property of regular subgroups in \ref{magic} will allow us to, at least in principle, enumerate $\mathcal{R}(G)$ given $\mathcal{S}(G)$.\par
\section{Classes of Regular Subgroups}
Although we are focusing on the sets $\mathcal{S}(G)$ and $\mathcal{R}(G)$ and their intersection, we shall prove a somewhat more general statement about classes of regular subgroups of a given degree. In particular, we will consider regular subgroups $N\leq Perm(X)$ for an arbitrary finite set $X$. Later on we shall return to the case where $X=G$.\par
For $X$ a finite set, we can take an arbitrary regular subgroup $G\leq Perm(X)$ and form the normalizer $\operatorname{Norm}_B(G)$ which by a slight abuse of notation, we shall denote $\operatorname{Hol}(G)$, which is canonically isomorphic to the classic holomorph of $G$.\par
\noindent For any pair of such regular subgroups $G_j,G_i\leq B$ we can define
\begin{align*}
S(G_j,[G_i])&=\{M\leq \operatorname{Hol}(G_j)\ |\ M\text{ is regular and $M\cong G_i$} \}\\
R(G_i,[G_j])&=\{N\leq B\ |\ N\text{ is regular and $G_i\leq \operatorname{Hol}(N)$ and $N\cong G_j$} \}\\
\end{align*}
which are two {\it complementary} sets of regular subgroups of $B$, one of which is contained within a fixed subgroup of $B$, while the other consists of subgroups of $B$ which may be widely dispersed within $B$. The class $R(G_i,[G_j])$ is of interest as, by the main theorem in \cite{GreitherPareigis1987}, it corresponds bijectively to the Hopf-Galois structures on a Galois extension $L/K$ given by a $K$-Hopf algebra $H$ where $Gal(L/K)\cong G_i$ and $H=(L[N])^{Gal(L/K)}$ where $N\cong G_j$. Another point of interest, is that, by \cite[Prop.\ A.3]{SVskewbraces}, every $N\in S(G_j,[G_i])$ corresponds to a skew brace with additive group isomorphic to $G_j$ and circle group isomorphic to $G_i$. However, unlike the case of Hopf-Galois structures and groups in $R(G_i,[G_j])$, the correspondence between the set of braces and the groups in $S(G_j,[G_i])$ is not one-to-one.\par
The enumeration of $R(G_i,[G_j])$ has been the subject of many papers in Hopf-Galois theory. In particular the fundamental relationship between $S(G_j,[G_i])$ and $R(G_i,[G_j])$ has been explored in \cite[Prop. 1]{Childs1989} by Childs, in \cite[Prop. 1]{Byott1996} by Byott, and by the author in \cite[p.540]{Kohl1998}. We present the following re-capitulation of all these ideas by showing that both $S(G_j,[G_i])$ (resp. $R(G_i,[G_j])$) give rise to a set of cosets with respect to $\operatorname{Hol}(G_i)$ (resp. $\operatorname{Hol}(G_j)$) where the unions of each collection of cosets are in bijective correspondence. This bijection we shall refer to as the reflection principle.
\begin{proposition}
\label{reflection}
If $B=Perm(X)$ and for $G_i$ and $G_j$ arbitrary regular subgroups of $B$ then
$$
|S(G_j,[G_i])|\cdot|\operatorname{Hol}(G_i)|=|R(G_i,[G_j])|\cdot|\operatorname{Hol}(G_j)|.
$$
\end{proposition}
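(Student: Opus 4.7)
The plan is to double-count a single set of conjugators in $B$. By Proposition \ref{magic}, every regular subgroup of $B$ isomorphic to $G_i$ has the form $\beta G_i \beta^{-1}$ for some $\beta \in B$, and two such conjugates coincide precisely when the conjugators lie in the same left coset of $\operatorname{Norm}_B(G_i) = \operatorname{Hol}(G_i)$. I would therefore introduce
$$
A \;=\; \{\beta \in B \ |\ \beta G_i \beta^{-1} \in S(G_j,[G_i])\}
$$
and observe that $A$ is a disjoint union of $|S(G_j,[G_i])|$ left cosets of $\operatorname{Hol}(G_i)$, giving $|A| = |S(G_j,[G_i])|\cdot|\operatorname{Hol}(G_i)|$. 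Symmetrically, the set
$$
A' \;=\; \{\gamma \in B \ |\ \gamma G_j \gamma^{-1} \in R(G_i,[G_j])\}
$$
satisfies $|A'| = |R(G_i,[G_j])|\cdot|\operatorname{Hol}(G_j)|$. The goal then becomes to exhibit a bijection between $A$ and $A'$.

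The key step is the ``reflection'' identity that makes these two conditions mirror one another. For any $\beta \in B$, the containment $\beta G_i \beta^{-1} \leq \operatorname{Norm}_B(G_j) = \operatorname{Hol}(G_j)$ simply says that $\beta G_i \beta^{-1}$ normalizes $G_j$; conjugating this relation by $\beta^{-1}$ shows it is equivalent to $G_i$ normalizing $\beta^{-1} G_j \beta$, i.e.\ $G_i \leq \operatorname{Hol}(\beta^{-1} G_j \beta)$. Since conjugation preserves both regularity and abstract isomorphism type, $\beta^{-1} G_j \beta$ is a regular subgroup of $B$ isomorphic to $G_j$, and similarly $\beta G_i \beta^{-1}$ is regular and isomorphic to $G_i$. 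Consequently $\beta \in A$ if and only if $\beta^{-1} G_j \beta \in R(G_i,[G_j])$, which is the case if and only if $\beta^{-1} \in A'$.

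The inversion map $\beta \mapsto \beta^{-1}$ on $B$ therefore restricts to a bijection $A \to A'$, and equating $|A| = |A'|$ yields the stated identity. I expect the only point that requires real care is the verification of the reflection identity itself, including the routine but necessary check that regularity and isomorphism type are preserved under the simultaneous conjugations on each side; everything else is coset bookkeeping driven by Proposition \ref{magic}.
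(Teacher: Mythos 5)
Your proposal is correct and is essentially the paper's own argument: your sets $A$ and $A'$ are exactly the unions of cosets $\bigcup_k \beta_k\operatorname{Hol}(G_i)$ and $\bigcup_l \alpha_l\operatorname{Hol}(G_j)$ used in the paper, and your inversion bijection is the paper's map $\Phi(\beta_k h)=(\beta_k h)^{-1}$, justified by the same reflection identity $\beta G_i\beta^{-1}\leq\operatorname{Hol}(G_j)\iff G_i\leq\operatorname{Hol}(\beta^{-1}G_j\beta)$. Your phrasing of the count as $|A|=|S(G_j,[G_i])|\cdot|\operatorname{Hol}(G_i)|$ via distinct left cosets is just a cleaner packaging of the same coset bookkeeping.
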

\begin{proof}
 If $M\in S(G_j,[G_i])$ then $M\leq \operatorname{Hol}(G_j)$ and $M\cong G_i$ which implies that there exists $\beta\in B$ such that $M=\beta G_i\beta^{-1}$. We observe further that $\beta G_i\beta^{-1}\leq \operatorname{Hol}(G_j)$ if and only if $G_i\leq \operatorname{Hol}(\beta^{-1}G_j\beta)$ and therefore that $\beta^{-1}G_j\beta\in R(G_i,[G_j])$. Moreover, if we replace $\beta$ by $\beta h$ for any $h\in \operatorname{Hol}(G_i)$ then $\beta h G_i(\beta h)^{-1}=\beta G_i\beta^{-1}=M$ but that $(\beta h)^{-1}G_j\beta h$ are all (not necessarily distinct) elements of $R(G_i,[G_j])$. In parallel, any $N\in R(G_i,[G_j])$ is equal to $\alpha G_j\alpha^{-1}$ for some $\alpha$ and that replacing $\alpha$ by $\alpha k$ for any $k\in \operatorname{Hol}(G_j)$ yields the same $N$. Moreover $\alpha^{-1}G_i\alpha$ lies in $S(G_j,[G_i])$ and likewise $(\alpha k)^{-1}G_i(\alpha k)$. Note that $\beta_1G_i\beta_1^{-1}=\beta_2G_i\beta_2^{-1}$ if and only if $\beta_1\operatorname{Hol}(G_i)=\beta_2 \operatorname{Hol}(G_i)$. As such we can parameterize the elements of $S(G_j,[G_i])$ by a set of distinct cosets
$$\beta_1\operatorname{Hol}(G_i),\dots,\beta_s \operatorname{Hol}(G_i)$$
and $R(G_i,[G_j])$ by distinct cosets
$$\alpha_1 \operatorname{Hol}(G_j),\dots,\alpha_r \operatorname{Hol}(G_j)$$
The bijection we seek is as follows:
$$
\dsize\Phi:\bigcup_{k=1}^{s} \beta_k\operatorname{Hol}(G_i) \rightarrow \bigcup_{l=1}^{r} \alpha_l \operatorname{Hol}(G_j)
$$
defined by $\Phi(\beta_k h) = (\beta_k h)^{-1}$. That $\Phi(\beta_k h)$ lies in the union on the right hand side is due to the analysis given above, and it is clear that $\Phi$ is bijective and therefore the result follows.
\end{proof}
\noindent Since $|\operatorname{Hol}(G)|=|G|\cdot|\operatorname{Aut}(G)|$ we have the following, which agrees with \cite[Cor. to Prop. 1]{Byott1996}.
\begin{corollary}
$$
|S(G_j,[G_i])|\cdot|\operatorname{Aut}(G_i)|=|R(G_i,[G_j])|\cdot|\operatorname{Aut}(G_j)|.
$$
and if $G_i=G=G_j$ is a fixed regular subgroup then
$$
|S(G,[G])|=|R(G,[G])|
$$
that is, the set of regular subgroups of $\operatorname{Hol}(G)$ that are isomorphic to $G$ has the same cardinality as the set of those regular subgroups of $B$ which are isomorphic to $G$ and normalized by $G$.
\end{corollary}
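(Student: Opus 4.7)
The plan is to derive both statements as direct consequences of Proposition \ref{reflection}, using nothing more than the standard factorization $|\operatorname{Hol}(G)|=|G|\cdot|\operatorname{Aut}(G)|$ together with the fact that any two regular subgroups of $B=\operatorname{Perm}(X)$ have the same cardinality, namely $|X|$.

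First I would recall that Proposition \ref{reflection} gives
\begin{equation*}
|S(G_j,[G_i])|\cdot|\operatorname{Hol}(G_i)|=|R(G_i,[G_j])|\cdot|\operatorname{Hol}(G_j)|.
\end{equation*}
Since $G_i$ and $G_j$ are both regular subgroups of $B=\operatorname{Perm}(X)$, their orders coincide with $|X|$, and in particular $|G_i|=|G_j|$. Substituting $|\operatorname{Hol}(G_i)|=|G_i|\cdot|\operatorname{Aut}(G_i)|$ and $|\operatorname{Hol}(G_j)|=|G_j|\cdot|\operatorname{Aut}(G_j)|$ into the above identity and cancelling the common factor $|G_i|=|G_j|$ from both sides yields
\begin{equation*}
|S(G_j,[G_i])|\cdot|\operatorname{Aut}(G_i)|=|R(G_i,[G_j])|\cdot|\operatorname{Aut}(G_j)|,
\end{equation*}
which is the first claimed identity.

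For the second statement, I would specialize to the case $G_i=G=G_j$. Then $\operatorname{Aut}(G_i)=\operatorname{Aut}(G)=\operatorname{Aut}(G_j)$, so the automorphism-group factors on both sides of the first identity are equal and nonzero, and can be cancelled to give $|S(G,[G])|=|R(G,[G])|$.

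There is essentially no obstacle here: the whole content of the corollary is already contained in Proposition \ref{reflection}, and the only observation needed beyond that is the order factorization of $\operatorname{Hol}$ and the equality $|G_i|=|G_j|$ forced by regularity on a common set $X$. The only minor point to flag explicitly is that $|G_i|=|G_j|=|X|$, which is what permits the cancellation; without the shared ambient set this step would fail.
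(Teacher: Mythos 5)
Your proposal is correct and follows exactly the route the paper intends: the corollary is stated immediately after the remark that $|\operatorname{Hol}(G)|=|G|\cdot|\operatorname{Aut}(G)|$, and the only content is substituting this into Proposition \ref{reflection} and cancelling $|G_i|=|G_j|=|X|$, which you do. Your explicit flag that regularity on the common set $X$ is what licenses the cancellation is a worthwhile clarification the paper leaves implicit.
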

We note briefly that for $X=G$, our $\mathcal{S}(G)=S(\lG,[\lG])$ and $\mathcal{R}(G)=R(\lG,[\lG])$, which we shall return to in the next section.\par
\begin{definition}
\label{param}
As seen in \ref{reflection}, for the collections $S(G_j,[G_j])$ and $R(G_i,[G_j])$, we have associated sets of cosets
\begin{align*}
C(S(G_j,[G_j]))&=\{\beta_1\operatorname{Hol}(G_i),\dots,\beta_s\operatorname{Hol}(G_i)\}\\
C(R(G_i,[G_j]))&=\{\alpha_1\operatorname{Hol}(G_i),\dots,\alpha_r \operatorname{Hol}(G_j)\}\\
\end{align*}
and we say $\{\beta_u\}$ parameterizes $S(G_j,[G_i])$ since $S(G_j,[G_i])=\{\beta_uG_i{\beta_u}^{-1}\}$ and $\{\alpha_v\}$ parameterizes $R(G_i,[G_j])$ since  $R(G_i,[G_j])=\{\alpha_vG_j{\alpha_v}^{-1}\}$.
\end{definition}
We also observe that any other set of coset representatives parameterizes the same set of subgroups. We should also point out that none of these sets of coset representatives are necessarily a complete set of coset representatives for some group containing $\operatorname{Hol}(G_i)$ or $\operatorname{Hol}(G_j)$.\par
If $|\operatorname{Aut}(G_i)|=|\operatorname{Aut}(G_j)|$ then $s=r$, and as we saw above, for a given $\beta_u$, $\Phi(\beta_u)=\beta_u^{-1}\in\alpha_v\operatorname{Hol}(G_j)$ for some $\alpha_v$. What we would like is to choose a set $\{\beta_1,\dots,\beta_s\}$ such that $\Phi(\{\beta_1,\dots,\beta_s\})$ parameterizes $R(G_i,[G_j])$. This is equivalent to finding a transversal of $C(S(G_j,[G_i]))$ which maps to a transversal of $C(R(G_i,[G_j]))$. The usual argument for this is some form of Hall's Marriage Theorem \cite{Hall35}, as is used to prove that for a subgroup of a group, one may find a set which is simultaneously a left and right transversal. However, again, the union of these cosets is not necessarily a group containing $\operatorname{Hol}(G)$ as a subgroup. Instead, we can more easily derive this from the theorem of K{\"o}nig in \cite{Konig} for bijections between sets partitioned into equal numbers of subsets.
\begin{lemma}
\label{pSpR}
If $|Aut(G_j)|=|Aut(G_j)|$ then it is possible to choose a set $P$ which parameterizes $S(G_j,[G_i])$ for which $\Phi(P)$ parameterizes $R(G_i,[G_j])$.
\end{lemma}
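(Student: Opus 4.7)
My plan is to reduce the statement to a combinatorial fact about bipartite graphs and then invoke König's $1$-factor theorem for regular bipartite graphs. The first step is to observe that since $G_i$ and $G_j$ are both regular subgroups of $B = Perm(X)$, we have $|G_i|=|G_j|=|X|$, and together with the hypothesis $|\operatorname{Aut}(G_i)|=|\operatorname{Aut}(G_j)|$ this yields $|\operatorname{Hol}(G_i)|=|\operatorname{Hol}(G_j)|=:m$. Proposition \ref{reflection} then forces $s=r=:n$ and gives a bijection $\Phi:U\to V$, where $U=\bigcup_k \beta_k\operatorname{Hol}(G_i)$ and $V=\bigcup_l \alpha_l\operatorname{Hol}(G_j)$ are disjoint unions, each partitioned into $n$ blocks of size $m$.

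Next I would build a bipartite multigraph $\Gamma$ whose left vertices are the $n$ cosets $\beta_k\operatorname{Hol}(G_i)$ and whose right vertices are the $n$ cosets $\alpha_l\operatorname{Hol}(G_j)$. For every element $u\in U$, insert an edge from the unique $U$-block containing $u$ to the unique $V$-block containing $\Phi(u)$. The degree of a left vertex $\beta_k\operatorname{Hol}(G_i)$ is exactly the size of that coset, namely $m$, and because $\Phi$ is a bijection the degree of a right vertex $\alpha_l\operatorname{Hol}(G_j)$ is $|\Phi^{-1}(\alpha_l\operatorname{Hol}(G_j))|=m$ as well. Thus $\Gamma$ is $m$-regular.

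Now I would apply König's theorem, which states that every regular bipartite (multi)graph admits a perfect matching (in fact decomposes into $m$ disjoint perfect matchings). Select one perfect matching $\mathcal{M}$ of $\Gamma$. Each edge of $\mathcal{M}$ is witnessed by a specific element $u\in U$ with $u\in \beta_k\operatorname{Hol}(G_i)$ and $\Phi(u)\in\alpha_l\operatorname{Hol}(G_j)$. Since $\mathcal{M}$ is a perfect matching, the collection $P$ of these witnesses meets each left coset $\beta_k\operatorname{Hol}(G_i)$ exactly once, while $\Phi(P)$ meets each right coset $\alpha_l\operatorname{Hol}(G_j)$ exactly once. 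By Definition \ref{param}, this means $P$ parameterizes $S(G_j,[G_i])$ and $\Phi(P)$ parameterizes $R(G_i,[G_j])$, which is the desired conclusion.

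There is no real obstacle here beyond bookkeeping: once the bipartite graph is defined correctly, the application of König's theorem is immediate. The only point worth verifying carefully is that $\Phi$ really does restrict to a bijection between the whole of $U$ and the whole of $V$ (rather than just mapping each individual $S$-coset into \emph{some} $R$-coset), and this is exactly what the proof of Proposition \ref{reflection} establishes.
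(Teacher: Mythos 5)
Your proof is correct and follows essentially the same route as the paper, which simply invokes K{\"o}nig's theorem for bijections between sets partitioned into equal numbers of equal-sized blocks without writing out the details. Your bipartite-multigraph construction and extraction of a perfect matching is exactly the standard way to make that citation precise, and your closing remark about verifying that $\Phi$ carries all of $U$ onto all of $V$ correctly identifies the one point that rests on Proposition \ref{reflection}.
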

This result will be useful in the analysis of $\mathcal{S}(G)$ and $\mathcal{R}(G)$ to come later.
\section{Mutually Normalizing Subgroups}
%\section{Loops and Zappa-Sz\'ep Extensions}
%\subsection{Mutually Normalizing Subgroups}
For the case $G_j=G_i=G$ for $X=G$ for $G$ a given group, we consider
\begin{align*}
\mathcal{S}=\mathcal{S}(G)&=S(\lG,[\lG])\\
\mathcal{R}=\mathcal{R}(G)&=R(\lG,[\lG])\\
\end{align*}
as introduced in section 2. As seen in the proof of \ref{reflection}, the elements conjugating $\lG$ to a given member of $\mathcal{S}(G)$ or $\mathcal{R}(G)$ are {\it not} unique, but they {\it do} give rise to unique cosets of $\operatorname{Hol}(G)$. And since $G_i=G_j=\lG$ then $|Aut(G_i)|=|Aut(G_j)|$ obviously and so, $\mathcal{S}(G)$ is parameterized by $\{\beta_1,\dots,\beta_s\}$ and $\mathcal{R}(G)$ is parameterized by $\{\alpha_1,\dots,\alpha_s\}$ where, by \ref{pSpR}, we may order the $\{\beta_i\}$ and $\{\alpha_i\}$ so that $\Phi(\beta_i)\in \alpha_i\HG$.\par
By re-ordering $\{\beta_1,\dots,\beta_s\}$ as needed, the members of $\mathcal{S}(G)\cap\mathcal{R}(G)$ are therefore parameterized by a subset $\{\beta_1,\dots,\beta_{t}\}\subseteq \{\beta_1,\dots,\beta_s\}$ and as such $\{\beta_1^{-1},\dots,\beta_t^{-1}\}$ also parameterizes $\mathcal{S}(G)\cap \mathcal{R}(G)$. 
\begin{lemma}
\label{invclosure}
$\mathcal{S}(G)\cap\mathcal{R}(G)$ is parameterized by a set of left coset representatives $\hat{P}=\{\beta_1,\dots,\beta_t\}$ where, for some $\sigma\in S_t$ one has $\beta_i^{-1}\lG\beta_i=\beta_{\sigma(i)}\lG\beta_{\sigma(i)}^{-1}$. Moreover $\{\beta_1,\dots,\beta_t\}$ is also a set of right coset representatives, in that $\beta_1\HG\cup\dots\cup \beta_t\HG=\HG\beta_1\cup\dots\cup \HG\beta_t$.
\end{lemma}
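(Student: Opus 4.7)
The crux of the argument is a symmetry under $\beta\leftrightarrow\beta^{-1}$: for any $\beta\in B$,
$$\beta\lG\beta^{-1}\in\mathcal{S}(G)\cap\mathcal{R}(G)\iff \beta^{-1}\lG\beta\in\mathcal{S}(G)\cap\mathcal{R}(G).$$
This follows because $\beta\lG\beta^{-1}\in\mathcal{S}(G)$ is just $\beta\lG\beta^{-1}\leq\HG$, while $\beta\lG\beta^{-1}\in\mathcal{R}(G)$ says $\lG\leq\operatorname{Norm}_B(\beta\lG\beta^{-1})=\beta\HG\beta^{-1}$; conjugating by $\beta^{-1}$, the latter becomes $\beta^{-1}\lG\beta\leq\HG$, i.e.\ $\beta^{-1}\lG\beta\in\mathcal{S}(G)$. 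Swapping the roles of $\beta$ and $\beta^{-1}$ gives each converse, so membership in $\mathcal{S}$ under one sign is the same as membership in $\mathcal{R}$ under the other, and the intersection is symmetric.

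Granting this, for each $i\in\{1,\dots,t\}$ the subgroup $\beta_i^{-1}\lG\beta_i$ lies in $\mathcal{S}(G)\cap\mathcal{R}(G)$, so via the $\mathcal{S}$-parameterization $\{\beta_j\lG\beta_j^{-1}\}_{j=1}^{t}$ it equals $\beta_{\sigma(i)}\lG\beta_{\sigma(i)}^{-1}$ for a unique $\sigma(i)\in\{1,\dots,t\}$; this defines $\sigma\colon\{1,\dots,t\}\to\{1,\dots,t\}$. To see that $\sigma\in S_t$, I would invoke the observation immediately preceding the lemma that $\{\beta_1^{-1},\dots,\beta_t^{-1}\}$ also parameterizes $\mathcal{S}(G)\cap\mathcal{R}(G)$ via the $\mathcal{R}$-parameterization supplied by Lemma~\ref{pSpR}: the $\beta_i^{-1}$ are then distinct left coset representatives of $\HG$, which is equivalent to the $\beta_i$ being distinct right coset representatives of $\HG$. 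If $\sigma(i)=\sigma(i')$, then $\beta_i^{-1}\lG\beta_i=\beta_{i'}^{-1}\lG\beta_{i'}$ gives $\beta_{i'}\beta_i^{-1}\in\HG$, so $\HG\beta_i=\HG\beta_{i'}$, and right-coset distinctness forces $i=i'$.

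For the coset-union equality, set $U=\bigcup_{i=1}^{t}\beta_i\HG$. A direct check shows $U=\{\beta\in B:\beta\lG\beta^{-1}\in\mathcal{S}(G)\cap\mathcal{R}(G)\}$, and the symmetry from the first paragraph identifies this with $U^{-1}=\bigcup_{i=1}^{t}\HG\beta_i^{-1}$, exhibiting $U$ as a disjoint union of $t$ distinct right cosets of $\HG$. Since each $\beta_j\in U$, the right coset $\HG\beta_j$ is contained in $U$; but by the distinctness just established, the $\HG\beta_j$ are themselves $t$ distinct right cosets, so a count against $|U|=t|\HG|$ forces them to exhaust the right-coset decomposition of $U$, giving $\bigcup_{i=1}^{t}\beta_i\HG = U = \bigcup_{i=1}^{t}\HG\beta_i$. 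I expect the only real subtlety to be recognizing the $\beta\leftrightarrow\beta^{-1}$ symmetry and carefully tracking that the $\mathcal{R}$-parameterization from Lemma~\ref{pSpR} is what supplies the right-coset distinctness; after that, everything reduces to counting inside $U$.
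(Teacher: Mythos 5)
Your proof is correct and follows essentially the same route as the paper's: both rely on Lemma~\ref{pSpR} to choose a parameterizing set whose image under $\Phi$ also parameterizes $\mathcal{S}(G)\cap\mathcal{R}(G)$, derive $\sigma$ from the resulting two descriptions of that set, and obtain the right-coset statement from the distinctness of the $\operatorname{Hol}(G)\beta_i$ together with the stability of $\bigcup_i\beta_i\operatorname{Hol}(G)$ under left multiplication by $\operatorname{Hol}(G)$. Your explicit verification of the $\beta\leftrightarrow\beta^{-1}$ symmetry makes precise a step the paper leaves to the preceding discussion, but it is the same argument.
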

\begin{proof}
By \ref{pSpR} there exists a set $P$ parameterizing $\mathcal{S}(G)$ such that $\Phi(P)$ parameterizes $\mathcal{R}(G)$. And since $\mathcal{S}(G)\cap\mathcal{R}(G)$ is a subset of both $\mathcal{S}(G)$ and $\mathcal{R}(G)$ obviously there exists $\hat{P}=\{\beta_1,\dots,\beta_t\}$ parameterizing $\mathcal{S}(G)\cap\mathcal{R}(G)$ which means $\Phi(\hat{P})=\{\beta_1^{-1},\dots,\beta_t^{-1}\}$ where
\begin{align*}
\mathcal{S}(G)\cap\mathcal{R}(G)&=\{\beta_1\lG\beta_1^{-1},\dots,\beta_t\lG\beta_t^{-1}\}\\
         &=\{\beta_1^{-1}\lG\beta_1,\dots,\beta_t^{-1}\lG\beta_t\}\\
\end{align*}
and so there is some $\sigma\in S_t$ such that$ \beta_i^{-1}\lG\beta_i^{-1}=\beta_{\sigma(i)}\lG\beta_{\sigma(i)}^{-1}$.\par
As to the second assertion, if we had $\HG\beta_i=\HG\beta_j$ then, in particular, $\beta_i=h\beta_j$ for some $h\in H$ which means $\beta_i^{-1}=\beta_j^{-1}h^{-1}$. This would imply therefore that $\beta_i^{-1}\lG\beta_i=(\beta_j^{-1}h^{-1})\lG(h\beta_j)=\beta_j^{-1}\lG\beta_j$, which implies $\beta_i^{-1}=\beta_j^{-1}$. We note that since $\beta_i\lG\beta_i^{-1}\in\mathcal{S}(G)\cap\mathcal{R}(G)$, the same is true for any $h\beta_i\in \HG\beta_i$.
\end{proof}
We again emphasize that this is not the simultaneous left/right transversal that exists for a subgroup of a given group, as a consequence of Hall's marriage theorem, but rather a consequence of the bijection $\Phi$ given above. Moreover, as a simultaneous left/right transversal, $\cup_{i=1}^{t} \beta_i\HG=\cup_{i=1}^{t}\HG\beta_i$ but one definitely cannot expect any of the left cosets to equal any right coset. In contrast however, $\cup_{i=1}^{t}\beta_i^{-1}\HG=\cup_{i=1}^{t}\beta_{\sigma(i)}\HG$ by virtue of $\beta_i^{-1}\HG=\beta_{\sigma(i)}\HG$ for each $i$. Also, as observed earlier, since $\lG\in\mathcal{S}(G)\cap\mathcal{R}(G)$ we may assume $id_B\in \{\beta_1,\dots,\beta_t\}$.\par
 One tantalizing possibility is whether there exists a set of parameters $\hat{P}=\{\beta_1,\dots,\beta_{t}\}$ parameterizing $\mathcal{S}(G)\cap\mathcal{R}(G)$ such that $\Phi(\hat{P})=\hat{P}$? Alas, the answer is no.\par
Our counterexample was found using the SmallGroups Library \cite{SmallGroups} and the group in question is the third group of order $40$ 
$$
G=\langle a,b\ |\ a^5=b^8=1;bab^{-1}=a^2\rangle
$$
which is a semidirect product $C_5\rtimes C_8$ given by the squaring map in $C_5$. For this group, $\mathcal{S}(G)\cap\mathcal{R}(G)$ has 14 members, parameterized by coset representatives $\{\beta_1,\dots,\beta_{14}\}$. To infer that {\it no} set of coset representatives of $\{\beta_1\HG,\dots,\beta_{14}\HG\}$ is closed under inversion, we begin by noting that for one of the non-trivial cosets $\beta \HG$ one has $\Phi(\beta\HG)=\beta \HG$. Without loss of generality, assume this is $\beta_1\HG$. We also find that this coset contains no elements of order 2. So in any set $\hat{P}$ of coset representatives, if $\widetilde{\beta}\in \hat{P}$ is that representing $\beta_1\HG$ then $\Phi(\hat{P})=\hat{P}$ would imply that $\Phi(\widetilde{\beta})=\widetilde{\beta}$, i.e. $\widetilde{\beta}^2=1$, which is impossible. We note, for later reference, that this $\beta_1 \HG$ which contains no elements of order $2$ is actually a coset in $T(G)=\NHG/\HG$. And we therefore infer that $\NHG$ is not a split extension of $\HG$.\par
So even though we cannot assume that a given set of coset representatives $\hat{P}=\{\beta_1,\dots,\beta_t\}$ parameterizing $\mathcal{S}(G)\cap\mathcal{R}(G)$ is closed under $\Phi$, \ref{invclosure} does give a type of inverse operation on some sets of parameters for $\mathcal{S}(G)\cap\mathcal{R}(G)$. Moreover, as we also observed, we may assume that such a parameter set contains $id_B$. However the counterexample just seen shows that there are cases where no $\{\beta_1,\dots,\beta_t\}$ parameterizing $\mathcal{S}(G)\cap\mathcal{R}(G)$ forms a group. So, in particular, $\mathcal{S}(G)\cap\mathcal{R}(G)$ is not the orbit of $\lG$ under a group of order equal to $|\mathcal{S}(G)\cap\mathcal{R}(G)|$.\par
\noindent We therefore ask a related question.
\begin{itemize}
\item Does $\cup_{i=1}^{t} \beta_i\operatorname{Hol}(G)$ form a group, and does this group act transitively on $\mathcal{S}(G)\cap\mathcal{R}(G)$?
\end{itemize}
In this way, we are looking for analogues of $\NHG$ and $T(G)$ which act on $\mathcal{H}(G)$. With the example of $\mathcal{H}(G)$ in mind, where $\NHG$ acts transitively and $T(G)=\NHG/\operatorname{Hol}(G)$ regularly, we again note that there need not exist any set of coset representatives which is itself a subgroup since $\NHG$ need not be a split extension of $\operatorname{Hol}(G)$. Moreover, if $\mathcal{S}(G)\cap\mathcal{R}(G)$ properly contains $\mathcal{H}(G)$ then such a group for which the orbit of $\lG$ is $\mathcal{S}(G)\cap\mathcal{R}(G)$ would contain $\NHG$ and therefore would not be a group extension of $\HG$.\par
Now, the set $\cup_{i=1}^{t} \beta_i\operatorname{Hol}(G)$ contains the identity since $\lG\in\mathcal{S}(G)\cap\mathcal{R}(G)$ but in order for this union of cosets to form a group, we need the following:
\begin{itemize}
\item For each $\beta_i h_1\in\beta_i\operatorname{Hol}(G)$,$\beta_j h_2\in\beta_j\operatorname{Hol}(G)$ there exists $\beta_k h_3\in\beta_k\operatorname{Hol}(G)$ such that $\beta_i h_1\beta_j h_2=\beta_kh_3$
\end{itemize}
Of course, basic group theory guarantees that if $\cup_{i=1}^{t} \beta_i\operatorname{Hol}(G)$ is closed in this way then it obviously contains inverses of all its elements. However, verifying this closure under multiplication is occasionally subtle. We note that, by \ref{invclosure}, we may choose a set of coset representatives $\{\beta_1,\dots,\beta_t\}$ such that
\begin{itemize}
\item if $\beta h\in \cup_{i=1}^{t} \beta_i\operatorname{Hol}(G)$ then $(\beta h)^{-1}\in\cup_{i=1}^{t} \beta_i\operatorname{Hol}(G)$
\end{itemize}
which, by itself, does not imply that $\cup_{i=1}^{t} \beta_i\operatorname{Hol}(G)$ is a group of course. Also by \ref{invclosure}, since this set of left coset representatives is also a set of right coset representatives, the closure of  $\cup_{i=1}^{t} \beta_i\operatorname{Hol}(G)$ is equivalent to the statement that for $\beta_i,\beta_j$, there exists $\beta_k$ such that $\beta_i\beta_j\lG\beta_j^{-1}\beta_i^{-1}=\beta_k\lG\beta_k^{-1}$. \par
\noindent However, there is an essential issue that must be addressed, namely that, in fact, it need not be the case that $\beta_i\beta_j\lG\beta_j^{-1}\beta_i^{-1}\in\mathcal{S}(G)\cap\mathcal{R}(G)$. Here is an example. Let $G=C_4\times C_2$ which can be represented as the regular permutation group $\langle (1,2)(3,4)(5,6)(7,8), (1,3,5,7)(2,4,6,8)\rangle\leq S_8$. One may compute $\mathcal{S}(G)\cap\mathcal{R}(G)$ which has 8 members, and one may compute a set of coset representatives which parameterize these 8 groups
$$
\{(), (2,5)(4,7),(5,6)(7,8), (4,8), (2,3)(6,7), (2,4)(3,7)(6,8), (2,3)(4,8)(6,7), (2,4)(6,8)\}
$$
but one may show that $(4,8)(2,5)(4,7)=(2,5)(4,7,8)$ conjugates $G$ to a group that is {\it not} normalized by $\lG$, ergo not in $\mathcal{R}(G)$ so obviously not in $\mathcal{S}(G)\cap\mathcal{R}(G)$. (Note, the regular permutation group is the group $G$ given above as opposed to the left regular representation of an abstract group.) This counterexample would seem to render this whole enterprise pointless, but instead, we shall examine {\it why} this closure property fails, and more importantly, what is necessary for it to hold. \par
In particular we shall look more broadly at subsets $\mathcal{X}(G)\subseteq\mathcal{S}(G)\cap\mathcal{R}(G)$ (containing $\lG$) and consider the coset representatives of $\HG$, $\{\beta_1,\dots,\beta_m\}$, which parameterize $\mathcal{X}(G)$, and look at necessary and sufficient conditions for the resulting union of cosets to form a group (containing $\HG$) for which the orbit of $\lG$ under this group is $\mathcal{X}(G)$.\par
\noindent {\bf Notation:} Going forward, if $\mathcal{X}(G)\subseteq\mathcal{S}(G)\cap\mathcal{R}(G)$, (where we assume $\lG\in\mathcal{X}(G)$) we shall use the notation $\pi(\mathcal{X}(G))$ to denote a set of coset representatives with respect to $\HG$ which conjugate $\lG$ to the elements of $\mathcal{X}(G)$. Whereas the choice of coset representatives is not unique, the cosets of $\HG$ are, of course, uniquely determined by $\mathcal{X}(G)$.\par
So for $\pi(\mathcal{X}(G))=\{\beta_1,\dots,\beta_m\}$ we wish to consider conditions which make $\cup_{i=1}^m\beta_i\HG$ a group under which the orbit of $\lG$ is $\mathcal{X}(G)$. One thing to note is that in the passage from $\mathcal{S}(G)\cap\mathcal{R}(G)$ to a given $\mathcal{X}(G)\subseteq \mathcal{S}(G)\cap\mathcal{R}(G)$ it is not altogether obvious whether there exists a $\pi(\mathcal{X}(G))$ for which a version of \ref{invclosure} holds. That is, whether the map $\Phi$ restricts to a bijection of $\cup_{i=1}^m\beta_i\HG$ to itself. \par
In order to streamline the notation and the subsequent analysis, we define the following.
\begin{definition}
A subset $A\subseteq B=Perm(G)$ is 'conj-closed' if for each $\alpha,\beta\in A$ there exists $\gamma\in A$ such that $\alpha\beta\lG\beta^{-1}\alpha^{-1}=\gamma\lG\gamma^{-1}$. If so then we can define a binary operation $*$ on $A$ by $\alpha*\beta=\gamma$.\par
And we say $A$ is 'inv-closed' if for each $\beta\in A$, there exists an $\alpha\in A$ such that $\beta^{-1}\lG\beta=\alpha\lG\alpha^{-1}$.
\end{definition}
One immediate observation to make is that by \ref{invclosure} there exists a $\pi(\mathcal{S}(G)\cap\mathcal{R}(G))$ that is inv-closed. And in general, for $\mathcal{X}(G)$ (containing $\lG$) parameterized by $\pi(\mathcal{X}(G))=\{\beta_1,\dots,\beta_m\}$, one has that $\cup_{i=1}^m\beta_i\HG$ is closed under multiplication (and therefore a group) if and only if $\pi(\mathcal{X}(G))$ is conj-closed. Of course, since $G$ is finite, conj-closure implies inv-closure. What the next result shows is a necessary condition on $\mathcal{X}(G)\subseteq\mathcal{S}(G)\cap\mathcal{R}(G)$ (where again $\lG\in\mathcal{X}(G)$) in order that a given $\pi(\mathcal{X}(G))=\{\beta_1,\dots,\beta_m\}$ be conj-closed, i.e. that $\cup_{i=1}^{t}\beta_iHol(G)$ be a group.
\begin{proposition}
\label{conjclosenorm}
If $\mathcal{X}(G)\subseteq\mathcal{S}(G)\cap\mathcal{R}(G)$ where $\pi(\mathcal{X}(G))=\{\beta_1,\dots,\beta_m\}$ and $\pi(\mathcal{X}(G))$ is conj-closed then all $N\in\mathcal{X}(G)$ normalize each other.
\end{proposition}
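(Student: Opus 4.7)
The plan is to combine two facts: first, that conj-closure of $\pi(\mathcal{X}(G))$ makes $Q := \bigcup_{i=1}^{m}\beta_i\HG$ a group; second, that $\mathcal{X}(G)\subseteq\mathcal{R}(G)$ means $\lG$ normalizes every $N\in\mathcal{X}(G)$. For the first, conj-closure reads: if $\beta_i\beta_j\lG\beta_j^{-1}\beta_i^{-1}=\beta_k\lG\beta_k^{-1}$ then $\beta_k^{-1}\beta_i\beta_j\in\HG$, whence $\beta_i\beta_j\in\beta_k\HG\subseteq Q$; so $Q$ is a finite multiplicatively closed set containing the identity, and hence a group (this is the content of the observation made just before the proposition).

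Setting $N_i=\beta_i\lG\beta_i^{-1}$, I would observe that $N_i$ normalizes $N_j$ if and only if, after conjugating by $\beta_i^{-1}$, the group $\beta_i^{-1}N_j\beta_i$ is normalized by $\lG$, so it suffices to show $\beta_i^{-1}N_j\beta_i\in\mathcal{X}(G)$. For this, use that $Q$ is a group, so $\beta_i^{-1}\beta_j\in Q$; write $\beta_i^{-1}\beta_j=\beta_l h$ for some index $l$ and some $h\in\HG$. Then
\begin{align*}
\beta_i^{-1}N_j\beta_i &= (\beta_i^{-1}\beta_j)\lG(\beta_i^{-1}\beta_j)^{-1}\\
                      &= \beta_l\bigl(h\lG h^{-1}\bigr)\beta_l^{-1} = \beta_l\lG\beta_l^{-1} = N_l,
\end{align*}
since $h\in\HG=\operatorname{Norm}_B(\lG)$. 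As $N_l\in\mathcal{X}(G)\subseteq\mathcal{R}(G)$, $\lG$ normalizes $N_l$, and conjugating back by $\beta_i$ shows that $N_i$ normalizes $\beta_iN_l\beta_i^{-1}=N_j$.

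The real content is that once $Q$ is a group, conjugation by any element of $Q$ permutes $\mathcal{X}(G)$, so $\beta_i^{-1}N_j\beta_i$ is automatically another $N_l$ rather than something exotic outside $\mathcal{X}(G)$. Given this, the result reduces to a single application of the defining property of $\mathcal{R}(G)$, and no subtle combinatorial obstacle of the kind that blocked the inv-closed parameter analysis in Lemma \ref{invclosure} arises; the one thing to be careful about is that the calculation above uses only the multiplicative closure of $Q$, not any compatibility between $\pi(\mathcal{X}(G))$ and inversion.
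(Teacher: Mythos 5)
Your overall reduction is the same as the paper's: both arguments come down to showing that the ``mixed'' conjugate $\beta_i^{-1}N_j\beta_i=\beta_i^{-1}\beta_j\lambda(G)\beta_j^{-1}\beta_i$ again lies in $\mathcal{X}(G)$, after which membership in $\mathcal{R}(G)$ (the paper equivalently uses membership in $\mathcal{S}(G)$, i.e.\ $\beta_j^{-1}\beta_i\lambda(G)\beta_i^{-1}\beta_j\leq\operatorname{Hol}(G)$) finishes the job exactly as in your final display. The difference lies in how that intermediate fact is obtained, and this is where your write-up has a repairable gap. You route everything through the claim that $Q=\bigcup_{i}\beta_i\operatorname{Hol}(G)$ is a group, but your justification only shows that $\beta_i\beta_j\in Q$ for the chosen representatives; that does not make $Q$ multiplicatively closed, because a general product $(\beta_ih_1)(\beta_jh_2)$ contains the factor $h_1\beta_j$, and $h_1\beta_j\in\beta_l\operatorname{Hol}(G)$ for some $l$ is equivalent to $h_1N_jh_1^{-1}\in\mathcal{X}(G)$ --- that is, to $\mathcal{X}(G)$ being stable under conjugation by $\operatorname{Hol}(G)$, which conj-closure of the representatives does not obviously supply (the paper's own ``closed iff conj-closed'' remark leans on the simultaneous left/right transversal property of Lemma \ref{invclosure}, proved only for $\mathcal{S}(G)\cap\mathcal{R}(G)$). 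Fortunately the only consequence you actually use is $\beta_i^{-1}\beta_j\in Q$, and that follows from conj-closure alone by the paper's finiteness device: by induction on word length, any word in the $\beta$'s conjugates $\lambda(G)$ into $\mathcal{X}(G)$ (peel off one letter at a time and apply conj-closure to the pair consisting of that letter and the representative of the shorter word's conjugate), so in particular $\beta_i^{-1}\beta_j=\beta_i^{|\beta_i|-1}\beta_j$ lands in some $\beta_l\operatorname{Hol}(G)$, which is exactly $\beta_i^{-1}N_j\beta_i=N_l\in\mathcal{X}(G)$. With that substitution your argument is correct and is, in substance, the paper's proof.
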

\begin{proof}
If for each $\beta_i,\beta_j$ there is a $\beta_k$ such that $\beta_i\beta_j\lG\beta_j^{-1}\beta_i^{-1}=\beta_k\lG\beta_k^{-1}$ then, in particular, if $|\beta_i|=a$ then $\beta_i^{a-1}\lG\beta_i^{-(a-1)}=\beta_i^{-1}\lG\beta_i=\beta_k\lG\beta_k^{-1}$ namely that $\beta_i^{-1}\operatorname{Norm_B}(G)=\beta_k\operatorname{Norm_B}(G)$. As such, for each $\beta_i$ we have that $\conj[\lG][\beta_i]$ and $\xconj[\lG][\beta_i]$ lie in $\mathcal{X}(G)$.\par
So we have that all conjugates of $\lG$ with respect to products, inverses, and combinations thereof from $\pi(\mathcal{X}(G))$ lie in $\mathcal{X}(G)$. So $\conj[\lG][\beta_i]\leq\operatorname{Norm_B}(\conj[\lG][\beta_j])$ if and only if
\begin{align*}
\beta_j^{-1}\beta_i\lG\beta_i^{-1}\beta_j&\leq Norm_B(\lG)=\operatorname{Hol}(G)\\
\lG&\leq\operatorname{Norm_B}(\beta_i^{-1}\beta_j\lG\beta_j^{-1}\beta_i)\\
\end{align*}
and symmetrically, $\conj[\lG][\beta_j]\leq\operatorname{Norm_B}(\conj[\lG][\beta_i])$ if and only if 
\begin{align*}
\beta_i\beta_j^{-1}\lG\beta_j\beta_i^{-1}&\leq\operatorname{Hol}(G)\\
\lG&\leq Norm_B(\beta_j^{-1}\beta_i\lG\beta_i^{-1}\beta_j).
\end{align*} Therefore, all elements in $\mathcal{X}(G)$ normalize each other.
\end{proof}
The point is that, if the elements of $\mathcal{X}(G)$ fail to mutually normalize each other, then it would contradict the closure property $\beta_i\beta_j\lG\beta_j^{-1}\beta_i^{-1}=\beta_k\lG\beta_k^{-1}$. The takeaway from this is that if any two elements of $\mathcal{X}(G)$ fail to normalize each other, then $\cup_{i=1}^{m}\beta_i\operatorname{Hol}(G)$ is not a group which is equivalent to having $\beta_i$,$\beta_j\in\pi(\mathcal{X}(G))$ so that $\beta_i\beta_j\lG\beta_j^{-1}\beta_i^{-1}\not\in\mathcal{X}(G)$. This is indeed the case for the counterexample we examined above, in particular $(2,5)(4,7)G(4,7)(2,5)$ normalizes $(4,8)G(4,8)$ but not the reverse.\par
This raises the question of whether the converse of the above result holds. A close examination of the proof reveals that we were able to derive that for each $\beta\in\pi(\mathcal{X}(G))$ one has that $\beta^{-1}\lG\beta\in\mathcal{X}(G)$. And this, in turn, implied that the elements of $\mathcal{X}(G)$ normalized each other. We note a partial converse of the above result when all the groups in $\mathcal{S}(G)\cap\mathcal{R}(G)$ normalize each other.
\begin{proposition}
\label{SRQ}
Let $\pi(\mathcal{S}(G)\cap\mathcal{R}(G))$ parameterize $\mathcal{S}(G)\cap\mathcal{R}(G)$. If the elements of $\mathcal{S}(G)\cap\mathcal{R}(G)$ are mutually normalizing, and if $\pi(\mathcal{S}(G)\cap\mathcal{R}(G))$ is inv-closed then $\pi(\mathcal{S}(G)\cap\mathcal{R}(G))$ is conj-closed.
\end{proposition}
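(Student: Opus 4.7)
The plan is to show directly that for every pair $\beta_i,\beta_j\in\pi(\mathcal{S}(G)\cap\mathcal{R}(G))$ the conjugate $(\beta_i\beta_j)\lG(\beta_i\beta_j)^{-1}$ again belongs to $\mathcal{X}:=\mathcal{S}(G)\cap\mathcal{R}(G)$; once this is established, the desired $\beta_k$ satisfying $\beta_i\beta_j\lG\beta_j^{-1}\beta_i^{-1}=\beta_k\lG\beta_k^{-1}$ is immediate from the parametrization of $\mathcal{X}$. Conjugation by an element of $B$ preserves regularity and isomorphism type, so only two conditions require checking: that $(\beta_i\beta_j)\lG(\beta_i\beta_j)^{-1}$ is contained in $\HG$ (membership in $\mathcal{S}(G)$), and that it is normalized by $\lG$ (membership in $\mathcal{R}(G)$).

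The bridge from the hypotheses to these two conditions is inv-closure. By hypothesis there exists $\beta_{i'}\in\pi(\mathcal{X})$ with $\beta_i^{-1}\lG\beta_i=\beta_{i'}\lG\beta_{i'}^{-1}$; write $N_{i'}$ for this common subgroup and set $N_j=\beta_j\lG\beta_j^{-1}$, so that $N_{i'},N_j\in\mathcal{X}$ and each normalizes the other by the mutual normalization hypothesis. Using the standard identity $\operatorname{Norm}_B(\gamma\lG\gamma^{-1})=\gamma\HG\gamma^{-1}$, the inclusion $(\beta_i\beta_j)\lG(\beta_i\beta_j)^{-1}\leq\HG$ rewrites as $N_j\leq\beta_i^{-1}\HG\beta_i=\operatorname{Norm}_B(N_{i'})$, which is precisely the statement that $N_j$ normalizes $N_{i'}$. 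A parallel computation --- take normalizers in $B$, push the $\HG$-factor outward, and apply the same inv-closure substitution --- rewrites the $\mathcal{R}(G)$-condition $\lG\leq\operatorname{Norm}_B((\beta_i\beta_j)\lG(\beta_i\beta_j)^{-1})$ as $N_{i'}\leq\operatorname{Norm}_B(N_j)$, i.e.\ $N_{i'}$ normalizes $N_j$. Both assertions are supplied by mutual normalization, so $(\beta_i\beta_j)\lG(\beta_i\beta_j)^{-1}\in\mathcal{X}$ and conj-closure holds.

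The main (and rather modest) obstacle is bookkeeping: the mutual normalization assumption is a statement about pairs of groups in $\mathcal{X}$, yet the quantities appearing naturally when one unwinds the conj-closure identity are conjugates of $\lG$ by $\beta_i^{-1}$ rather than by $\beta_i$, and these need not a priori lie in $\mathcal{X}$. Inv-closure is exactly the tool that repairs this, allowing one to replace $\beta_i^{-1}\lG\beta_i$ by the manifest member $N_{i'}=\beta_{i'}\lG\beta_{i'}^{-1}$ of $\mathcal{X}$ at the cost of a harmless renaming. After that single substitution no further calculation is required beyond switching between the formulations ``$X\leq\operatorname{Norm}_B(Y)$'' and ``$X$ normalizes $Y$''; this also clarifies why both halves of the hypothesis are necessary, since mutual normalization without inv-closure would leave us unable to invoke the hypothesis on the pair that actually arises.
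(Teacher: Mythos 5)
Your argument is correct and is essentially the paper's own proof: both reduce membership of $\beta_i\beta_j\lG\beta_j^{-1}\beta_i^{-1}$ in $\mathcal{S}(G)\cap\mathcal{R}(G)$ to the two conditions (containment in $\HG$ and being normalized by $\lG$), use inv-closure to replace $\beta_i^{-1}\lG\beta_i$ by a member $\beta_{i'}\lG\beta_{i'}^{-1}$ of the set, and then invoke mutual normalization. Your write-up is if anything slightly more explicit about the $\mathcal{S}(G)$-side condition, which the paper dispatches with ``a similar argument.''
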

\begin{proof}
For $\pi(\mathcal{S}(G)\cap\mathcal{R}(G))=\{\beta_1,\dots,\beta_t\}$ being inv-closed we have that for each $\beta_i$, one has $\beta_i\lG\beta_i^{-1}\in\mathcal{S}(G)\cap\mathcal{R}(G)$ if and only if $\beta_i^{-1}\lG\beta_i\in\mathcal{S}(G)\cap\mathcal{R}(G)$. And $\mathcal{S}(G)\cap\mathcal{R}(G)$ being mutually normalizing implies that for all $\beta_i,\beta_j$ we have $\beta_i\lG\beta_i^{-1}\leq Hol(\beta_j\lG\beta_j^{-1})$. So for a given pair $\beta_i,\beta_j$ we find that $\beta_i\beta_j\lG\beta_j^{-1}\beta_i^{-1}\in \mathcal{S}(G)\cap\mathcal{R}(G))$ if and only if 
\begin{align*}
\lG &\leq Hol(\beta_i\beta_j\lG\beta_j^{-1}\beta_i^{-1})\\
\beta_i\beta_j\lG\beta_j^{-1}\beta_i^{-1}&\leq Hol(\lG)\text{ i.e. $Hol(G)$}
\end{align*}
but the first condition is equivalent to $\beta_i^{-1}\lG\beta_i\leq Hol(\beta_j\lG\beta_j^{-1})$ which holds by inv-closure $\beta_i^{-1}\lG\beta_i\in\mathcal{S}(G)\cap\mathcal{R}(G)$ and the fact that $\mathcal{S}(G)\cap\mathcal{R}(G)$ is mutually normalizing. A similar argument shows that $\beta_i\beta_j\lG\beta_j^{-1}\beta_i^{-1}\leq Hol(\lG)$ holds as well.
\end{proof}
In the above proof, we cannot replace $\mathcal{S}(G)\cap\mathcal{R}(G)$ with an arbitrary mutually normalizing subset $\mathcal{X}(G)$ with an inv-closed $\pi(\mathcal{X}(G))$. The reason for this is that for a given $\beta_i$, $\beta_j$ in $\mathcal{X}(G)$, our proof above yields that $\beta_i\beta_j\lG\beta_j^{-1}\beta_i^{-1}$ is an element of $\mathcal{S}(G)\cap\mathcal{R}(G)$ but not necessarily $\mathcal{X}(G)$. Besides this, we have a straightforward counterexample. If we go back to the order 40 group example one more time, we may choose $\mathcal{X}(G)$ to be any 3 element subset of $\mathcal{H}(G)$ (which includes $\lG$) where the coset representatives parameterizing the other (non-$\lG$) elements of $\mathcal{X}(G)$ have order 2. All of these are mutually normalizing and $\pi(\mathcal{X}(G))$ is certainly inv-closed. However, $\pi(\mathcal{X}(G))$ cannot be conj-closed as it would imply that $T(G)$ (of order 4) has a subgroup of order $3$.\par
We have the following summary.
\begin{proposition}
\label{ScapRloop}
Let $\mathcal{X}(G)\subseteq \mathcal{S}(G)\cap \mathcal{R}(G)$ (containing $\lG$) be parameterized by $\pi(\mathcal{X}(G))=\{\beta_1,\dots,\beta_m\}$, where $\pi(\mathcal{X}(G))$ is conj-closed and let $\XHG=\cup_{i=1}^{m}\beta_i \operatorname{Hol}(G)$. The following properties hold:
\begin{description}
\item[(a)] All the groups in $\mathcal{X}(G)$ normalize each other.\par
\item[(b)] $Orb_{\XHG}(\lG)=\mathcal{X}(G)$
\item[(c)] $|\XHG|=|\mathcal{X}(G)|\cdot|\operatorname{Hol}(G)|$ 
\end{description}
\end{proposition}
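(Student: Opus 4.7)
My plan is the following. Part (a) requires nothing beyond an immediate invocation of Proposition \ref{conjclosenorm}: its hypotheses ($\mathcal{X}(G)\subseteq\mathcal{S}(G)\cap\mathcal{R}(G)$ with conj-closed $\pi(\mathcal{X}(G))$) are precisely what we have assumed here, so (a) is essentially a citation.

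For part (b), my first move is to invoke the equivalence asserted in the discussion preceding the definition of conj-closed: since $\pi(\mathcal{X}(G))$ is conj-closed, the union $\XHG=\cup_{i=1}^{m}\beta_i\HG$ is closed under multiplication, and is therefore a subgroup of $B$. Because $\lG\in\mathcal{X}(G)$, I may take $\beta_1=id_B$, so $\HG\subseteq\XHG$ and $\XHG$ acts on the regular subgroups of $B$ by conjugation. I would then take an arbitrary $x\in\XHG$, write it as $x=\beta_i h$ with $h\in\HG=\operatorname{Norm}_B(\lG)$, and compute
$$x\lG x^{-1}=\beta_i(h\lG h^{-1})\beta_i^{-1}=\beta_i\lG\beta_i^{-1}\in\mathcal{X}(G)$$
by the defining property of $\pi(\mathcal{X}(G))$. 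Conversely, every $N\in\mathcal{X}(G)$ equals $\beta_i\lG\beta_i^{-1}$ for some $\beta_i\in\pi\subseteq\XHG$, which places $N$ in the orbit. This yields $Orb_{\XHG}(\lG)=\mathcal{X}(G)$.

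Part (c) then reduces to pure counting: since the $\beta_i$ parameterize distinct elements of $\mathcal{X}(G)$, the $m$ cosets $\beta_i\HG$ are pairwise distinct, hence disjoint, and each has exactly $|\HG|$ elements, giving $|\XHG|=m\cdot|\HG|=|\mathcal{X}(G)|\cdot|\HG|$.

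The hard part, to my mind, is the justification that conj-closure of $\pi(\mathcal{X}(G))$ actually forces $\XHG$ to be closed under multiplication---the non-trivial direction of the equivalence asserted (without detailed argument) in the passage preceding the definition. A self-contained verification would require showing that an arbitrary product $\beta_i h_1\beta_j h_2$ can be rewritten as $\beta_k h_3\in\XHG$, ultimately leveraging the conj-closure identity $\beta_i\beta_j\lG\beta_j^{-1}\beta_i^{-1}=\beta_k\lG\beta_k^{-1}$ to shepherd the ``$\beta$ part'' of the product into a single coset $\beta_k\HG$. Once this group structure is secured, the three parts become essentially bookkeeping.
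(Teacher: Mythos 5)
Your proposal is correct and follows essentially the same route as the paper, which simply cites Proposition \ref{conjclosenorm} for (a) and invokes the orbit-stabilizer theorem (with $\operatorname{Hol}(G)$ as the stabilizer of $\lG$) for (b) and (c); your computation of $x\lG x^{-1}=\beta_i\lG\beta_i^{-1}$ and the coset count are just that argument made explicit. The step you flag as the ``hard part'' --- that conj-closure of $\pi(\mathcal{X}(G))$ makes $\XHG$ a group --- is likewise taken as already established in the paper (from the discussion preceding the definition of conj-closure), so deferring to it puts you on the same footing as the paper's own proof.
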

\begin{proof}
We have seen that condition (a) is consequence of conj-closure already, and conditions (b) and (c) are the orbit stabilizer theorem, where $Hol(G)$ is the stabilizer subgroup of $\lG$. 
\end{proof}
Another way to view the mutually normalizing property is as follows. If we think of the groups in $\mathcal{S}(G)\cap\mathcal{R}(G)$ as the nodes in a directed graph, where there is an 'arrow' $N_1\rightarrow N_2$ if $N_1$ normalizes $N_2$ then there are arrows $\lG\rightarrow N$ for all $N\in\mathcal{S}(G)\cap\mathcal{R}(G)$ and if two groups $N_1,N_2\in\mathcal{S}(G)\cap\mathcal{R}(G)$ mutually normalize each other then we may view $N_1$ and $N_2$ as being connected by simply an 'edge' $N_1\leftrightarrow N_2$. So any $\mathcal{X}(G)$ satisfying \ref{ScapRloop} would be a clique (complete subgraph) within this digraph.\par
We observe that $\mathcal{X}(G)=\mathcal{H}(G)$ clearly satisfies the conditions of \ref{ScapRloop} and is a prototype example. Indeed, for $\mathcal{X}(G)=\mathcal{H}(G)$ one has that $\XHG=\NHG$. As our goal is to generalize the multiple holomorph, we will consider a very specific subset of $\mathcal{S}(G)\cap\mathcal{R}(G)$ which will contain $\mathcal{H}(G)$ which will be defined for all groups $G$, and will satisfy the necessary property of being mutually normalizing. And as this is a set that is defined universally for any group $G$, we will use a different notation than $\mathcal{X}(G)$.\par
\begin{definition}
Let $\mathcal{Q}(G)=\underset{N\in\mathcal{S}(G)\cap\mathcal{R}(G)}{\bigcap} \{M\in \mathcal{S}(G)\cap\mathcal{R}(G)\ |\ N\text{ normalizes }M\}$.
\end{definition}
Another way to view this is as follows. For a given $M\in\mathcal{S}(G)\cap\mathcal{R}(G)$, since $M$ is regular, it makes sense to define $$\mathcal{S}(M)=\{\text{regular }A\leq \operatorname{Norm}_B(M)\ |\ A\cong M\}$$ which is the direct analogue of $\mathcal{S}(G)=\mathcal{S}(\lG)=S(\lG,[\lG])$ so here, $\mathcal{S}(M)=S(M,[M])$, and so
$$
\mathcal{Q}(G)=\underset{N\in\mathcal{S}(G)\cap\mathcal{R}(G)}{\bigcap}\{M\in\mathcal{S}(G)\cap\mathcal{R}(G)\ |\ N\in\mathcal{S}(M)\}
$$
which is a subset of $\mathcal{S}(G)\cap\mathcal{R}(G)$ which we shall show has the properties we want. The first most important observation is the following (nearly obvious) fact.
\begin{lemma}
\label{Qmutual}
The members of $\mathcal{Q}(G)$ mutually normalize each other.
\end{lemma}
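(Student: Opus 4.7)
The plan is to unwind the definition of $\mathcal{Q}(G)$ directly; no machinery beyond the definition and the built-in symmetry of the indexed intersection is needed. First I would observe that membership of $M$ in $\mathcal{Q}(G)$ encodes two facts: (i) $M$ itself lies in $\mathcal{S}(G)\cap\mathcal{R}(G)$ (since the intersection is taken over a family of subsets of $\mathcal{S}(G)\cap\mathcal{R}(G)$), and (ii) for every $N\in\mathcal{S}(G)\cap\mathcal{R}(G)$, the group $N$ normalizes $M$.

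Given this reading, the proof is almost a restatement of the definition. Taking arbitrary $M_1,M_2\in\mathcal{Q}(G)$, I would apply (i) to $M_1$ to place $M_1$ inside $\mathcal{S}(G)\cap\mathcal{R}(G)$, then apply (ii) to $M_2$ with the choice $N=M_1$ to conclude that $M_1$ normalizes $M_2$. Swapping the roles of $M_1$ and $M_2$ gives the symmetric conclusion, and the mutual normalizing property follows.

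There is essentially no obstacle here; the lemma is the shadow of the way $\mathcal{Q}(G)$ is defined as a $\bigcap$ over $\mathcal{S}(G)\cap\mathcal{R}(G)$. The substantive issues come later: showing that $\mathcal{Q}(G)$ is nonempty (it manifestly contains $\lG$, since $\lG\in\mathcal{S}(G)\cap\mathcal{R}(G)$ and every member of $\mathcal{S}(G)$ already sits inside $\operatorname{Hol}(G)=\operatorname{Norm}_B(\lG)$, hence normalizes $\lG$), that $\mathcal{H}(G)\subseteq\mathcal{Q}(G)$, and most importantly that one can choose a parameter set $\pi(\mathcal{Q}(G))$ which is conj-closed, so that \ref{ScapRloop} applies and produces the quasi-holomorph $\QHG$ as a genuine group. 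The present lemma is the preparatory observation that puts the mutually-normalizing hypothesis of \ref{ScapRloop}(a) within reach for this canonical choice of $\mathcal{X}(G)$.
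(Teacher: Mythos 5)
Your proof is correct and is essentially identical to the paper's: both unwind the definition of $\mathcal{Q}(G)$ as an intersection, note that a member of $\mathcal{Q}(G)$ lies in $\mathcal{S}(G)\cap\mathcal{R}(G)$ and is therefore an admissible choice of the index $N$, and then specialize $N$ to each of the two groups in turn. No gap; the extra remarks about nonemptiness and conj-closure are accurate context but not needed for the lemma itself.
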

\begin{proof}
If $A,B\in\mathcal{Q}(G)$ then for all $N\in\mathcal{S}(G)\cap\mathcal{R}(G)$ one has that $N$ normalizes $A$ and $N$ normalizes $B$, so in particular, if $N=A$ then we have that $A$ normalizes $B$, and symmetrically, for $N=B$ we have that $B$ normalizes $A$.
\end{proof}
And as we wish to have a generalization of the multiple holomorph, we observe the following.
\begin{lemma}
\label{HinQ}
For $\mathcal{Q}(G)$ defined above, one has $\mathcal{H}(G)\subseteq\mathcal{Q}(G)$.
\end{lemma}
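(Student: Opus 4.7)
The plan is to unwind the two set-theoretic definitions and exploit the defining property of $\mathcal{H}(G)$, namely that every $M\in\mathcal{H}(G)$ satisfies $\operatorname{Norm}_B(M)=\operatorname{Hol}(G)$. Since $\mathcal{Q}(G)$ is defined as an intersection, I would fix an arbitrary $M\in\mathcal{H}(G)$ and verify that $M$ belongs to each of the sets being intersected.

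First I would invoke Lemma \ref{SRcontainsH} to note that $M\in\mathcal{H}(G)\subseteq\mathcal{S}(G)\cap\mathcal{R}(G)$, so $M$ is at least a candidate for membership in $\mathcal{Q}(G)$. The substantive step is then to show, for every $N\in\mathcal{S}(G)\cap\mathcal{R}(G)$, that $N$ normalizes $M$. But the membership $N\in\mathcal{S}(G)$ already guarantees $N\leq\operatorname{Hol}(G)$, and the hypothesis $M\in\mathcal{H}(G)$ gives $\operatorname{Norm}_B(M)=\operatorname{Hol}(G)$. Combining these, $N\leq\operatorname{Hol}(G)=\operatorname{Norm}_B(M)$, so $N$ normalizes $M$. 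As $N$ was arbitrary in $\mathcal{S}(G)\cap\mathcal{R}(G)$, this places $M$ in the intersection defining $\mathcal{Q}(G)$.

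There is essentially no obstacle here; the proof is a direct translation of the definitions, with the key ingredient being that $\operatorname{Hol}(G)$ is the \emph{full} normalizer of each $M\in\mathcal{H}(G)$, which upgrades "$N\leq\operatorname{Hol}(G)$" to "$N$ normalizes $M$." Note that symmetry (that $M$ normalizes $N$) is not needed for this containment — only the one-sided normalizing condition appearing in the definition of $\mathcal{Q}(G)$. The proof is therefore short, no more than a few lines, and requires no appeal to the reflection principle or to the inv-closed/conj-closed machinery.
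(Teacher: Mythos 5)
Your proof is correct and is essentially identical to the paper's: both arguments hinge on the single observation that $N\in\mathcal{S}(G)$ forces $N\leq\operatorname{Hol}(G)=\operatorname{Norm}_B(M)$ whenever $\operatorname{Norm}_B(M)=\operatorname{Hol}(G)$. Your explicit appeal to Lemma \ref{SRcontainsH} to confirm $M\in\mathcal{S}(G)\cap\mathcal{R}(G)$ is a small tidiness the paper leaves implicit, but the argument is the same.
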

\begin{proof}
If $M\in\mathcal{H}(G)$ then $\operatorname{Norm}_B(M)=\operatorname{Norm}_B(\lG)=\operatorname{Hol}(G)$, so for any $N\in\mathcal{S}(G)\cap\mathcal{R}(G)$ one has $N\leq \operatorname{Hol}(G)=\operatorname{Norm}_B(M)$ for all $N\in\mathcal{S}(G)\cap\mathcal{R}(G)$. That is, $N\in\mathcal{S}(M)$ for all $N\in\mathcal{S}(G)\cap\mathcal{R}(G)$.
\end{proof}
%
\begin{comment}
As to whether $\pi(\mathcal{Q}(G))$ is conj-closed we first note the following.
\begin{proposition}
Let $\pi(\mathcal{Q}(G))=\{\beta_1,\dots,\beta_m\}$ which is assumed to be a subset of a $\pi(\mathcal{S}(G)\cap\mathcal{R}(G))$ which is inv-closed. If $\beta_i,\beta_j\in\pi(\mathcal{Q}(G))$  then $\beta_i\beta_j\lG\beta_j^{-1}\beta_i^{-1}\in\mathcal{R}(G)$.
\end{proposition}
\begin{proof}
As $\beta_i$ lies in $\pi(\mathcal{Q}(G))\subset\pi(\mathcal{S}(G)\cap\mathcal{R}(G))$ where the latter is assumed to be inv-closed, then $\beta_i^{-1}\lG\beta_i\in\mathcal{S}(G)\cap\mathcal{R}(G)$. And by definition of $\mathcal{Q}(G)$ we have that $\beta_i^{-1}\lG\beta_i\leq Hol(\beta_j\lG\beta_j^{-1})$ but this means $\lG\leq Hol(\beta_i\beta_j\lG\beta_j^{-1}\beta_i^{-1})$.
\end{proof}
\end{comment}
The set $\mathcal{H}(G)$ is maximal with respect to the property of having the same normalizer as $\lG$. This prompts the question of whether $\mathcal{Q}(G)$ is maximal with respect to the property of being mutually normalizing. For abelian groups there are counterexamples. If $G=C_2\times C_4$ then $\mathcal{H}(G)$ contains two members, and using GAP \cite{GAP4}, one may show that $|\mathcal{S}(G)\cap\mathcal{R}(G)|$ has 8 members, and that $\mathcal{Q}(G)=\mathcal{H}(G)$. However, there are subsets of $\mathcal{S}(G)\cap\mathcal{R}(G)$ (properly containing $\mathcal{Q}(G)$) which are mutually normalizing. For non-abelian groups, the computational evidence suggests that $\mathcal{Q}(G)$ {\it is} the maximal subset of $\mathcal{S}(G)\cap\mathcal{R}(G)$ that contains $\mathcal{H}(G)$ and is mutually normalizing.\par
One further observation that we can make is about the size of $\mathcal{Q}(G)$ compared to that of $\mathcal{H}(G)$.
\begin{proposition}
For any group $G$, one has $|\mathcal{H}(G)| \big\| |\mathcal{Q}(G)|$.
\end{proposition}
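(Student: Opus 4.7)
The plan is to apply orbit--stabilizer to the conjugation action of $\NHG$ on $\mathcal{Q}(G)$. Classically (Theorem~\ref{NHG}) $\NHG$ acts transitively on $\mathcal{H}(G)$ with $\HG$ as the stabilizer of $\lG$, so $|\mathcal{H}(G)|=[\NHG:\HG]$. Assuming this action extends to all of $\mathcal{Q}(G)$, the stabilizer of $M\in\mathcal{Q}(G)$ is $\NHG\cap\operatorname{Norm}_B(M)$, a subgroup of $\operatorname{Norm}_B(M)$. Since $M$ is regular with $M\cong G$, we have $|\operatorname{Norm}_B(M)|=|\HG|$, so by Lagrange the stabilizer's order divides $|\HG|$, and the orbit of $M$ has size $|\NHG|/|\operatorname{Stab}|$, which is a multiple of $|\NHG|/|\HG|=|\mathcal{H}(G)|$. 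Because $\mathcal{Q}(G)$ partitions into these orbits (one being $\mathcal{H}(G)$ itself, of size exactly $|\mathcal{H}(G)|$), $|\mathcal{H}(G)|$ divides $|\mathcal{Q}(G)|$.

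The substantive content is verifying invariance: for $\beta\in\NHG$ and $M\in\mathcal{Q}(G)$, I must show $\beta M\beta^{-1}\in\mathcal{Q}(G)$. First, since $\beta$ normalizes $\HG$, $\beta M\beta^{-1}\le\HG$, placing it in $\mathcal{S}(G)$. Second, $\beta^{-1}\lG\beta\in\mathcal{H}(G)\subseteq\mathcal{S}(G)\cap\mathcal{R}(G)$, so by the defining property of $\mathcal{Q}(G)$ this element normalizes $M$; conjugating by $\beta$ places $\lG$ inside $\operatorname{Norm}_B(\beta M\beta^{-1})$, so $\beta M\beta^{-1}\in\mathcal{R}(G)$. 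Third, for every $N\in\mathcal{S}(G)\cap\mathcal{R}(G)$ one needs $N\le\operatorname{Norm}_B(\beta M\beta^{-1})$, equivalently $\beta^{-1}N\beta\le\operatorname{Norm}_B(M)$; a natural sufficient condition is that $\beta^{-1}N\beta$ again lies in $\mathcal{S}(G)\cap\mathcal{R}(G)$, so that the defining property of $\mathcal{Q}(G)$ can be applied.

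The main obstacle is this last sub-step, which reduces to showing that $\beta\lG\beta^{-1}\in\mathcal{H}(G)$ normalizes $N$, and more broadly that every member of $\mathcal{H}(G)$ normalizes every member of $\mathcal{S}(G)\cap\mathcal{R}(G)$. The definition of $\mathcal{Q}(G)$ only records the opposite direction (that $\mathcal{S}(G)\cap\mathcal{R}(G)$ normalizes its members), so the required symmetry is not automatic. I would attack it by writing $\tilde M=\gamma\lG\gamma^{-1}$ for some $\gamma\in\NHG$ and combining the normality $\tilde M\triangleleft\HG$ with the mutual-normalization property of $\mathcal{Q}(G)\supseteq\mathcal{H}(G)$ from Lemma~\ref{Qmutual}, and possibly invoking Proposition~\ref{SRQ} to lift the mutual-normalization of $\mathcal{Q}(G)$ into the needed symmetry with $\mathcal{S}(G)\cap\mathcal{R}(G)$. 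Once this invariance is secured, the orbit--stabilizer count outlined above closes the argument, with the divisibility appearing as the orbit-decomposition of $\mathcal{Q}(G)$ under $\NHG$.
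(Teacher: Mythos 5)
Your counting framework (orbit--stabilizer for $\NHG$ acting on $\mathcal{Q}(G)$ by conjugation, with each orbit length a multiple of $[\NHG:\HG]=|\mathcal{H}(G)|$) would indeed yield the divisibility, but everything rests on the invariance step that you yourself flag as the main obstacle, and the repair you sketch cannot work. The sufficient condition you propose --- that every member of $\mathcal{H}(G)$ normalizes every member of $\mathcal{S}(G)\cap\mathcal{R}(G)$ --- is false in general. The paper itself exhibits counterexamples: by Proposition \ref{QX0}, for $D_n$ with $n>4$ even, $\rho(D_n)\in\mathcal{H}(D_n)$ fails to normalize the members of $\mathcal{S}(D_n)\cap\mathcal{R}(D_n)$ of block type $W(X_i,Y_i)$, $i=1,2$, and such members exist (for $n=6$ one has $|\mathcal{S}(D_6)\cap\mathcal{R}(D_6)|=8>2=|\mathcal{H}(D_6)|$); indeed this failure is precisely why those $N$ are excluded from $\mathcal{Q}(D_n)$. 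Concretely, for $\beta\in\NHG$ the conjugate $\beta^{-1}N\beta$ of an $N\in\mathcal{S}(G)\cap\mathcal{R}(G)$ stays in $\mathcal{S}(G)$ but need not stay in $\mathcal{R}(G)$, so you cannot invoke the defining property of $\mathcal{Q}(G)$ to place $\beta^{-1}N\beta$ inside $\operatorname{Norm}_B(M)$. (Your argument does go through for $\beta\in\HG$, since then $\beta\lG\beta^{-1}=\lG$; the problem is exactly the nontrivial cosets of $\HG$ in $\NHG$.) Whether $\NHG$ even stabilizes $\mathcal{Q}(G)$ setwise is left unresolved by your proposal.

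The paper's proof avoids this by modifying $M$ on the other side. Writing $M=\gamma\lG\gamma^{-1}$, it considers not $\beta\gamma\lG\gamma^{-1}\beta^{-1}$ but $\gamma\tau\lG\tau^{-1}\gamma^{-1}$ for $\tau$ running over representatives of $T(G)$, i.e.\ the class $\mathcal{H}(M)$ of regular subgroups sharing the normalizer $\operatorname{Norm}_B(M)=\gamma\HG\gamma^{-1}$. The membership criterion for $\mathcal{Q}(G)$, namely $\gamma^{-1}\beta\lG\beta^{-1}\gamma\leq\HG$ for all $\beta\in\pi(\mathcal{S}(G)\cap\mathcal{R}(G))$, is preserved under this modification because the required containment becomes $\gamma^{-1}\beta\lG\beta^{-1}\gamma\leq\tau\HG\tau^{-1}=\HG$: the twist by $\tau$ lands on the $\HG$ side of the containment, where it is absorbed, rather than on the $N$ side, where it is not. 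Hence $M\in\mathcal{Q}(G)$ implies $\mathcal{H}(M)\subseteq\mathcal{Q}(G)$, so $\mathcal{Q}(G)$ is partitioned into same-normalizer classes each of size exactly $|T(G)|=|\mathcal{H}(G)|$, and the divisibility follows. To salvage your orbit-theoretic framing you would first have to prove that $\NHG$ preserves $\mathcal{Q}(G)$ under conjugation, which is a genuinely stronger, and here unproved, assertion than what the paper actually uses.
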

\begin{proof}
If $M\in\mathcal{Q}(G)$ then $M=\gamma\lG\gamma^{-1}$ for some $\gamma$. Now, if $\tau\in T(G)$ then $\HG=\tau\HG\tau^{-1}=\tau\operatorname{Norm}_B(\lG)\tau^{-1}=\operatorname{Norm}_{B}(\tau\lG\tau^{-1})$ and so 
\begin{align*}
\operatorname{Norm}_B(M)&=\operatorname{Norm}_B(\gamma\lG\gamma^{-1})=\gamma\operatorname{Norm}_B(\lG)\gamma^{-1}\\
&=\gamma\operatorname{Norm}_B(\tau\lG\tau^{-1})\gamma^{-1}=\operatorname{Norm}_B(\gamma\tau\lG\tau^{-1}\gamma^{-1})
\end{align*}
which means that $\mathcal{H}(M)=\{\gamma\tau\lG\tau^{-1}\gamma^{-1}\ |\ \tau\in T(G)\}$.\par
So $M=\gamma\lG\gamma^{-1}\in\mathcal{Q}(G)$ means that $\beta\lG\beta\leq\operatorname{Norm}_B(\gamma\lG\gamma^{-1})$ for all $\beta\in\pi(\mathcal{S}(G)\cap\mathcal{R}(G))$, which is equivalent to 
$$
\gamma^{-1}\beta\lG\beta^{-1}\gamma\leq Hol(G)
$$
for all $\beta\in \pi(\mathcal{S}(G)\cap\mathcal{R}(G))$. So we can ask whether $M\in\mathcal{Q}(G)$ implies $\mathcal{H}(M)\subseteq\mathcal{Q}(G)$. But this is equivalent to 
$$
\beta\lG\beta^{-1}\leq\operatorname{Norm}_B(\gamma\tau\lG\tau^{-1}\gamma^{-1})\text{\ \ \ {\bf (*)}}
$$
for all $\beta\in \pi(\mathcal{S}(G)\cap\mathcal{R}(G))$ and all $\tau\in T(G)$, which is equivalent to $\tau^{-1}\gamma^{-1}\beta \lG\beta^{-1}\gamma\tau\leq\operatorname{Norm}_B(\lG)=\HG$, but this is equivalent to $\gamma^{-1}\beta \lG\beta^{-1}\gamma\leq \tau\HG\tau^{-1}$, but $\tau\HG\tau^{-1}=\HG$ of course so {\bf (*)} does hold. So since $M\in\mathcal{Q}(G)$ implies $\mathcal{H}(M)\subseteq\mathcal{Q}(G)$ then $\mathcal{Q}(G)$ can be partitioned into distinct classes, all of which have the same holomorph (normalizer), and the result follows.
\end{proof}
And as one would expect, like LaGrange's theorem, divisibility results like this have many consequences, such as this one.
\begin{corollary}
If $G$ is non-abelian and $|\mathcal{S}(G)\cap\mathcal{R}(G)|$ is odd then $\mathcal{Q}(G)$ is a proper subset of $\mathcal{S}(G)\cap\mathcal{R}(G)$.
\end{corollary}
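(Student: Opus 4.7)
The plan is to combine the divisibility result $|\mathcal{H}(G)| \big\| |\mathcal{Q}(G)|$ from the preceding proposition with a parity argument: I will show that $|\mathcal{H}(G)|$ is \emph{even} whenever $G$ is non-abelian, which forces $|\mathcal{Q}(G)|$ to be even, and hence strictly smaller than the odd $|\mathcal{S}(G)\cap\mathcal{R}(G)|$.

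To show $|\mathcal{H}(G)|$ is even for non-abelian $G$, the key trick is to use the inversion map $\iota \in B=\operatorname{Perm}(G)$ defined by $\iota(g)=g^{-1}$. A direct computation gives $\iota\lambda(g)\iota^{-1}=\rho(g)$ and $\iota\rho(g)\iota^{-1}=\lambda(g)$, while for $\alpha\in\operatorname{Aut}(G)$ one checks $\iota\alpha\iota^{-1}=\alpha$ (using that $\alpha(h^{-1})=\alpha(h)^{-1}$). Since $\operatorname{Hol}(G)=\rho(G)\operatorname{Aut}(G)$, this shows $\iota\in\operatorname{NHol}(G)$. Moreover, $\iota$ fixes $\operatorname{id}_G$, so if $\iota$ lay in $\operatorname{Hol}(G)$ it would lie in $A(G)=\operatorname{Aut}(G)$; but $\iota$ is an automorphism iff $G$ is abelian. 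So for non-abelian $G$, $\iota\notin\operatorname{Hol}(G)$, yet $\iota^2=\operatorname{id}_B$, producing an element of order $2$ in $T(G)=\operatorname{NHol}(G)/\operatorname{Hol}(G)$. By Cauchy, $|T(G)|=|\mathcal{H}(G)|$ is then even.

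From the preceding proposition $|\mathcal{H}(G)|$ divides $|\mathcal{Q}(G)|$, so $|\mathcal{Q}(G)|$ is even as well. But $\mathcal{Q}(G)\subseteq\mathcal{S}(G)\cap\mathcal{R}(G)$ and $|\mathcal{S}(G)\cap\mathcal{R}(G)|$ is odd by hypothesis, so the two sets cannot coincide, giving the desired proper containment.

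The main obstacle, if any, is verifying that $\iota$ actually normalizes $\operatorname{Hol}(G)$ but fails to lie inside it in the non-abelian case; everything else is immediate from the already-proved divisibility statement and the identification $|\mathcal{H}(G)|=|T(G)|$ from Theorem \ref{NHG}. Since this verification is the standard computation underpinning the classical fact that $\lambda(G)$ and $\rho(G)$ are always members of $\mathcal{H}(G)$, the proof should be quite short.
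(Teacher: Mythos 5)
Your argument is correct and is essentially the intended one: the paper states this corollary without proof immediately after the divisibility proposition, and the evident route is exactly yours --- the inversion map $\iota(g)=g^{-1}$ normalizes $\operatorname{Hol}(G)$, fixes $id_G$, and fails to be an automorphism when $G$ is non-abelian, so $\iota\operatorname{Hol}(G)$ is an involution in $T(G)$, making $|\mathcal{H}(G)|=|T(G)|$ even, whence $|\mathcal{Q}(G)|$ is even by the divisibility and cannot equal the odd $|\mathcal{S}(G)\cap\mathcal{R}(G)|$. One small correction: the evenness of $|T(G)|$ follows from Lagrange's theorem (the order of the element $\iota\operatorname{Hol}(G)$ divides $|T(G)|$), not Cauchy's theorem, which is the converse implication.
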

There is one other implication of \ref{ScapRloop} to mention which is fairly interesting. Recall that we defined $\mathcal{S}(G)=S(\lG,[\lG])$, the collection of regular subgroups of $\operatorname{Norm}_B(\lG)$ that are isomorphic to $\lG$, and $\mathcal{R}(G)=R(\lG,[\lG])$ the collection of regular subgroups of $B$, isomorphic to $\lG$ and normalized by $\lG$. For a given $G$ with associated $\mathcal{Q}(G)=\mathcal{S}(G)\cap\mathcal{R}(G)$, since each $N\in\mathcal{Q}(G)$ is regular, and isomorphic to $\lG$ then it makes sense to consider $\mathcal{Q}(N)=S(N,[N])\cap R(N,[N])$, namely the regular subgroups $N$ of $\operatorname{Norm}_B(N)$, isomorphic to $N$, that are normalized by $N$. With this in mind, we have the following consequence of \ref{ScapRloop}.\par
\begin{proposition}
\label{equalQ}
If $G$ is a group and $N\in\mathcal{Q}(G)$, one has $\mathcal{Q}(N)=\mathcal{Q}(G)$.
\end{proposition}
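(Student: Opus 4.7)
My approach is to exhibit conjugation by a suitable element as an explicit bijection which both identifies $\mathcal{S}(G)\cap\mathcal{R}(G)$ with $\mathcal{S}(N)\cap\mathcal{R}(N)$ and fixes $\mathcal{Q}(G)$ setwise. Since $N \in \mathcal{Q}(G)$, applying \ref{ScapRloop} with $\mathcal{X}(G) = \mathcal{Q}(G)$ yields some $\gamma \in \QHG$ with $N = \gamma\lG\gamma^{-1}$; let $\phi_\gamma$ denote conjugation by $\gamma$, so that $\phi_\gamma(\HG) = \operatorname{Hol}(N)$.

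First I would verify that $\phi_\gamma$ restricts to a bijection $\mathcal{S}(G)\cap\mathcal{R}(G) \to \mathcal{S}(N)\cap\mathcal{R}(N)$: for $M \in \mathcal{S}(G) \cap \mathcal{R}(G)$, the containment $M \leq \HG$ gives $\gamma M \gamma^{-1} \leq \operatorname{Hol}(N)$, while $\lG \leq \operatorname{Norm}_B(M)$ gives $N \leq \operatorname{Norm}_B(\gamma M \gamma^{-1})$; the symmetric argument with $\phi_{\gamma^{-1}}$ supplies the inverse. Since $\phi_\gamma$ is an automorphism of $B$ it preserves the normalization relation, so transporting the defining intersection of $\mathcal{Q}(G)$ through this bijection gives $\phi_\gamma(\mathcal{Q}(G)) = \mathcal{Q}(N)$.

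To finish I would show that $\phi_\gamma$ also stabilizes $\mathcal{Q}(G)$ as a set. By \ref{ScapRloop}(b), $\mathcal{Q}(G)$ is the $\QHG$-orbit of $\lG$, so for any $M = \delta\lG\delta^{-1}$ with $\delta \in \QHG$ one has $\gamma M \gamma^{-1} = (\gamma\delta)\lG(\gamma\delta)^{-1}\in \mathcal{Q}(G)$, using $\gamma\delta \in \QHG$; thus $\gamma\mathcal{Q}(G)\gamma^{-1} \subseteq \mathcal{Q}(G)$, with equality by cardinality. Combining the two identities yields $\mathcal{Q}(N) = \phi_\gamma(\mathcal{Q}(G)) = \mathcal{Q}(G)$.

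The substantive hypothesis buried in the argument is that $\pi(\mathcal{Q}(G))$ is conj-closed, so that $\QHG$ is genuinely a group and $\gamma\delta$ stays inside it; this is what licenses the invocation of \ref{ScapRloop}(b). Granting conj-closure, the remainder is straightforward orbit-stabilizer bookkeeping together with the invariance of the defining conditions for $\mathcal{S}$, $\mathcal{R}$, and $\mathcal{Q}$ under conjugation.
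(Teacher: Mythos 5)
Your argument is correct, but it takes a genuinely different route from the paper's. The paper's proof is a containment-plus-counting argument: by Lemma \ref{Qmutual} every $M\in\mathcal{Q}(G)$ normalizes and is normalized by $N$ and is isomorphic to it, so $\mathcal{Q}(G)\subseteq S(N,[N])\cap R(N,[N])=\mathcal{Q}(N)$ (the surrounding passage works under the identification $\mathcal{Q}(N)=S(N,[N])\cap R(N,[N])$); then, since isomorphic regular subgroups are conjugate by \ref{magic}, $\operatorname{Norm}_B(N)$ is conjugate to $\HG$, whence $|\mathcal{Q}(N)|=|\mathcal{Q}(G)|$ and the containment is an equality. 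That route needs neither the orbit description of $\mathcal{Q}(G)$ nor any group structure on $\QHG$, since mutual normalization holds unconditionally. Your transport argument instead establishes $\phi_\gamma(\mathcal{S}(G)\cap\mathcal{R}(G))=\mathcal{S}(N)\cap\mathcal{R}(N)$ and hence $\phi_\gamma(\mathcal{Q}(G))=\mathcal{Q}(N)$ for the full intersection definition of $\mathcal{Q}$ --- which is cleaner and more general on that side --- but your closing step, $\gamma\mathcal{Q}(G)\gamma^{-1}=\mathcal{Q}(G)$, is precisely the conj-closure of $\pi(\mathcal{Q}(G))$, which the paper only conjectures to hold for arbitrary $G$. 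In effect you trade the paper's standing hypothesis for the conjectural group structure on $\QHG$; you flag this honestly, but if the proposition is to be used without that assumption, the one-sided containment from \ref{Qmutual} followed by the cardinality argument via \ref{magic} is the safer finish, and it would slot into your framework by replacing your final stabilization step.
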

\begin{proof}
This follows directly from \ref{ScapRloop} since if $N\in\mathcal{Q}(G)$ then $N$ normalizes, and is normalized by, all other elements of $\mathcal{Q}$. But since all such $N$ are isomorphic regular subgroups then $\operatorname{Norm}_B(N)\cong \operatorname{Norm}_B(\lG)$ so $|\mathcal{Q}(N)|=|\mathcal{Q}(G)|$, so $\mathcal{Q}(N)=\mathcal{Q}(G)$.
\end{proof}
We now define the following.
\begin{definition}
For any $\pi(\mathcal{Q}(G))=\{\beta_1,\dots,\beta_m\}$ which parameterizes $\mathcal{Q}(G)$ let the {\it quasi-holomorph} of $G$ be $\QHG=\cup_{i=1}^{m}\beta_i \operatorname{Hol}(G)$.
\end{definition}
Before going any further, there is a potential obstruction to this whole development which must be dealt with right now. Specifically we need to determine whether $\QHG$ is closed, namely is a group, i.e. that a $\pi(\mathcal{Q}(G))$ exists which satisfies \ref{ScapRloop}. If $\mathcal{Q}(G)=\mathcal{S}(G)\cap\mathcal{R}(G)$ or $\mathcal{Q}(G)=\mathcal{H}(G)$ then this is automatic by \ref{SRQ}. But, as we shall see, there are $G$ for which each of the containments 
$$
\mathcal{H}(G)\subseteq\mathcal{Q}(G)\subseteq\mathcal{S}(G)\cap\mathcal{R}(G)
$$
is proper. It is not clear to the author that in this case, there necessarily exists a $\pi(\mathcal{Q}(G))$ which is conj-closed, i.e. that $\QHG$ must be a group. It is our conjecture, however, that $\QHG$ {\it is} a group for all $G$, but at the present time no proof is apparent. We shall show that $\QHG$ is a group for cyclic and dihedral $G$, and in the final section we shall exhibit calculations exhibiting the sizes of the sets $\mathcal{S}(G)\cap\mathcal{R}(G)$, $\mathcal{H}(G)$, and $\mathcal{Q}(G)$ and that for all those explored, $\QHG$ is a group.
\par
\subsection{Loops and Zappa-Sz\'ep Extensions}
The usage of the term 'quasi' is meant to be suggestive of the notions of quasigroup and loop. We remind the reader of some basic definitions, but the terminology in this area is far from consistent throughout the literature, although a standard reference for these systems is \cite{Bruck} for example.
\begin{definition}
A set with a binary operation $(T,*)$ is a left (resp. right) {\it quasigroup} if the equation $a*x=b$ (resp. $y*a=b$) has a unique solution for every $a,b\in T$. And $(T,*)$ is a quasigroup if it is simultaneously a left and right quasigroup.\par
If, in addition, a left (resp. right) quasigroup $(T,*)$ has right (resp. left) identity, namely an element $1\in T$ such that $a*1=a$ (resp. $1*a=a$) for all $a\in T$ then it is a left (resp. right) {\it loop}. And it is a loop if it is quasigroup with a simultaneous left and right identity.
\end{definition}
Note, the pair $(T,*)$ being a quasigroup (two sided) is equivalent to the Cayley table for it being a Latin square, and if it has an identity then it contains a column and row which are identically ordered. Unlike a group, however, a quasigroup/loop need not be associative, but an associative loop is a group.\par
One of the simplest examples of a left quasigroup $(T,*)$ is when $T$ is a left transversal for a subgroup $H\leq G$ where $a*b=c$ if $ab\in cH$. And if $h\in T\cap H$ then clearly $a*h=a$, so, in fact, $(T,*)$ is a left loop. What one derives from \ref{ScapRloop} is that one may define the binary operation on $\pi(\mathcal{Q}(G))$ in terms of $\mathcal{Q}(G)$. Specifically, if one defines $\beta_i*\beta_j=\beta_k$ if $\beta_i\beta_j\lG\beta_j^{-1}\beta_i^{-1}=\beta_k\lG\beta_k^{-1}$ then this is the same as the transversal structure. In addition, we pointed out that any $\pi(\mathcal{Q}(G))$ contains exactly one element of $\operatorname{Hol}(G)$, which corresponds to $\lG\in\mathcal{Q}(G)$, and this element acts as a right identity. As such, if $\pi(\mathcal{Q}(G)$ is conj-closed then, with respect to this '$*$' operation, it is a loop.\par
We saw above that $\mathcal{H}(G)$ is the orbit of $\lG$ under the action of $\NHG$, where, since $\operatorname{Hol}(G)$ is the stabilizer subgroup for this action, $|\mathcal{H}(G)|=|T(G)|=|\NHG/\operatorname{Hol}(G)|$. For $\mathcal{H}(G)$ one has that $\NHG$ is an extension of $\operatorname{Hol}(G)$, namely
$$
\operatorname{Hol}(G)\hookrightarrow \NHG\twoheadrightarrow T(G)
$$
and indeed it is the maximal subgroup of $Perm(G)$ which is an extension of $\operatorname{Hol}(G)$. As $\mathcal{H}(G)\subseteq\mathcal{Q}(G)$ the quasi-holomorph $\QHG$ is potentially {\it larger} than $\NHG$ and therefore not an extension of $\operatorname{Hol}(G)$ in the usual sense. As we observed earlier, when $\mathcal{Q}(G)=\mathcal{S}(G)\cap\mathcal{R}(G)$ we may assume that $\pi(\mathcal{Q}(G))$ contains the identity and is closed under the taking of inverses. This is suggestive of the possibility that $\QHG$ is a Zappa-Sz\'ep extension of $\operatorname{Hol}(G)$, namely that $\QHG=\pi(\mathcal{Q}(G))\operatorname{Hol}(G)$ an internal product (also called the knit-product or Zappa-Sz\'ep product) of $\operatorname{Hol}(G)$ and some $\pi(\mathcal{Q}(G))$ if $\pi(\mathcal{Q}(G))$ happens to form a subgroup of $\QHG$.
\par
The term 'Zappa-Sz\'ep Extension' appears in \cite{Araujo2011} for the category of semi-groups, and the view of such internal products as extensions analogous to traditional split extensions appears in \cite{Ates2009}. Early references to such internal products (of subgroups of a given group) appears, for example, in Sz\'ep's original paper \cite{Szep1950} on the subject, but was studied by others as well, around the same time.\par
Looking back to the case of $\mathcal{H}(G)$ and $\NHG$ for a moment, we note that, although $\NHG$ is an extension of $\operatorname{Hol}(G)$, it need not necessarily be a split extension in that $\NHG=\operatorname{Hol}(G)M$ for some $M\leq\NHG$ where $Orb_{M}(\lG)=\mathcal{H}(G)$ and $M\cap \operatorname{Hol}(G)=\{id_B\}$. Indeed, using GAP \cite{GAP4} and the SmallGroups library \cite{SmallGroups} we can exhibit specific examples. For two groups of order 40, specifically $G\cong \langle a,b\ |\ a^5=b^8=1,bab^{-1}=a^2\rangle$ and $G\cong \operatorname{Hol}(C_{5})\times C_2$, $T(G)$ is isomorphic to the Klein-4 group, but for both, $\NHG$ contains no subgroups $M$ isomorphic to $V$ such that $M$ parameterizes $\mathcal{H}(G)$ and $\NHG=\operatorname{Hol}(G)M$.\par
However, upon examination of other low order groups, it seems that it's generally the case that $\NHG$ {\it is} a split extension of $\operatorname{Hol}(G)$. For example, in \cite[Thm 2.11]{Kohl-Multiple-2014}, for the dihedral groups $D_{n}=\langle x,t\ |\ x^n=1,t^2=1,xt=tx^{-1}\rangle$ we have:
\begin{theorem}\cite[2.11]{Kohl-Multiple-2014} 
For the $n$-th dihedral group $D_n$, of order $2n$ we have:
$$
\mathcal{H}(D_n)=\{\langle \rho(x)\phi_{u+1,1},\rho(t)\phi_{(0,u)}\rangle\ |\ u\in\Upsilon_n\}
$$
where $T(D_n)\cong \Upsilon_n$, and $\phi_{(i,j)}(t^ax^b)=t^ax^{ia+jb}\in \operatorname{Aut}(D_n)$.
\end{theorem}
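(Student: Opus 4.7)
The plan is to enumerate $\mathcal{H}(D_n)$ directly, using the semidirect decomposition $\operatorname{Hol}(D_n) = \rho(D_n)\operatorname{Aut}(D_n)$ together with the explicit description of $\operatorname{Aut}(D_n)$ by the maps $\phi_{(i,j)}(t^a x^b) = t^a x^{ia+jb}$ with $i\in\mathbb{Z}/n\mathbb{Z}$ and $j\in(\mathbb{Z}/n\mathbb{Z})^\times$. Since $\operatorname{Aut}(D_n)$ is the stabilizer of $\operatorname{id}_{D_n}$ inside $\operatorname{Hol}(D_n)$, every element factors uniquely as $\rho(g)\phi_{(i,j)}$, and the commutation rule $\phi_{(i,j)}\rho(h) = \rho(\phi_{(i,j)}(h))\phi_{(i,j)}$ lets one compute all products in closed form.

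First I would write a general candidate $N = \langle X,T\rangle$ with $X = \rho(t^a x^b)\phi_{(i_1,j_1)}$ and $T = \rho(t^c x^d)\phi_{(i_2,j_2)}$ and translate the dihedral relations $|X|=n$, $|T|=2$, $TXT = X^{-1}$ into a finite system of congruences in the eight parameters. Regularity of $N$ then reduces to $N\cap\operatorname{Aut}(D_n)=\{1\}$, which follows from $|N|=2n=[\operatorname{Hol}(D_n):\operatorname{Aut}(D_n)]$ by orbit-stabilizer applied to the action of $\operatorname{Hol}(D_n)$ on $D_n$.

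Next I would impose $N\triangleleft \operatorname{Hol}(D_n)$ by requiring that conjugation of $X$ and $T$ by each of $\rho(x)$, $\rho(t)$, and a generating set of $\operatorname{Aut}(D_n)$ returns an element of $\langle X,T\rangle$. These conditions, combined with the previous ones, collapse the eight parameters to one: the tuples $(a,b,i_1,j_1)$ and $(c,d,i_2,j_2)$ are forced into the rigid normal form $(0,1,u+1,1)$ and $(1,0,0,u)$, respectively, for a single unit $u\in(\mathbb{Z}/n\mathbb{Z})^\times$ satisfying the compatibility congruence that defines $\Upsilon_n$. Conversely, for every such $u$ one verifies directly that $N_u := \langle \rho(x)\phi_{(u+1,1)},\, \rho(t)\phi_{(0,u)}\rangle$ is regular, isomorphic to $D_n$, and normal in $\operatorname{Hol}(D_n)$, which gives the stated parametrization.

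Finally, the identification $T(D_n)\cong \Upsilon_n$ as groups (not just sets) follows by producing explicit conjugators $\beta_u\in\operatorname{NHol}(D_n)$ satisfying $\beta_u\lambda(D_n)\beta_u^{-1}=N_u$ (whose existence is guaranteed by Proposition \ref{magic}) and checking that the composition $\beta_u\beta_v$ realizes the natural product on $\Upsilon_n$; the regularity of the $T(D_n)$-action on $\mathcal{H}(D_n)$ from Theorem \ref{NHG} then upgrades the set bijection to a group isomorphism. The main obstacle will be the normality step: many pairs $(X,T)$ satisfy the order and braid relations alone, and only the simultaneous requirement that $\rho(x)$, $\rho(t)$, and the generators of $\operatorname{Aut}(D_n)$ all normalize $\langle X,T\rangle$ pins the parameters down. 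A small amount of case analysis for parity of $n$ (and degenerate small $n$) is also needed, since both $\operatorname{Aut}(D_n)$ and the involutions in $(\mathbb{Z}/n\mathbb{Z})^\times$ depend on the prime factorization of $n$.
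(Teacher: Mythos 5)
The paper does not prove this statement; it is quoted from \cite{Kohl-Multiple-2014}, Theorem 2.11, so there is no internal proof to compare against. Your strategy --- writing the generators of a candidate $N$ in the factored form $\rho(g)\phi_{(i,j)}$, imposing the dihedral presentation, regularity, and normality in $\operatorname{Hol}(D_n)$, and showing the parameters collapse --- is the natural approach and is essentially that of the cited reference. Your reduction of the defining condition $\operatorname{Norm}_B(N)=\operatorname{Hol}(D_n)$ to $N\triangleleft\operatorname{Hol}(D_n)$ is legitimate: by Proposition \ref{magic}, $\operatorname{Norm}_B(N)$ is a conjugate of $\operatorname{Hol}(D_n)$ and hence has the same order, so the containment forced by normality is an equality; this is exactly the displayed characterization of $\mathcal{H}(G)$ in the introduction.

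Two points need repair. First, the regularity step is stated backwards: $N\cap\operatorname{Aut}(D_n)=\{1\}$ does \emph{not} follow from $|N|=2n$ (the point stabilizer $\operatorname{Aut}(D_n)$ has order $n\varphi(n)$ and can easily contain subgroups of order $2n$); trivial intersection is an additional condition that must be imposed, and only then does $|N|=2n$ together with orbit--stabilizer yield transitivity and hence regularity. Second, and more seriously, the entire content of the theorem is the claim that the normality conditions collapse the eight parameters to the normal form $(0,1,u+1,1)$, $(1,0,0,u)$ with $u^2\equiv 1 \pmod n$; your sketch asserts this collapse but does not carry it out, nor does it address why the $\rho$-part of the first generator may be normalized to $\rho(x)$ rather than a general $\rho(t^ax^b)$ --- one must quotient by the freedom to choose different generating pairs of the same subgroup $N$, and it is in that step that the restriction from all of $U_n$ down to $\Upsilon_n=\{u: u^2=1\}$ actually emerges. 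Until that computation is performed the parametrization is not established. The final step, upgrading the bijection $\mathcal{H}(D_n)\leftrightarrow\Upsilon_n$ to a group isomorphism $T(D_n)\cong\Upsilon_n$, is fine in outline but genuinely requires the explicit conjugators: regularity of the $T(D_n)$-action from Theorem \ref{NHG} gives only $|T(D_n)|=|\Upsilon_n|$, and since $\Upsilon_n$ is elementary abelian you must still verify that every nontrivial coset of $\operatorname{Hol}(D_n)$ in $\operatorname{NHol}(D_n)$ squares into $\operatorname{Hol}(D_n)$.
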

 As $\QHG$ is not an extension in the usual sense, split or otherwise, the analog is precisely the representation of $\QHG$ as a Zappa-Sz\'ep extension of $\operatorname{Hol}(G)$, if possible. When $\NHG$ is split, and $\QHG$ is a Zappa-Sz\'ep product, then we have a natural consequence of \ref{SRcontainsH} which, even if there are many different isomorphism classes of $\pi(\mathcal{Q}(G))$, places some restrictions. 
\begin{proposition}
\label{TandQ}
If $\NHG\cong \operatorname{Hol}(G)\rtimes M$ is a split extension and $\QHG=\pi(\mathcal{Q}(G))\operatorname{Hol}(G)$ is a Zappa-Sz\'ep product for a $\pi(\mathcal{Q}(G))$ which is a group, then $\pi(\mathcal{Q}(G))$ contains a subgroup isomorphic to $M$.
\end{proposition}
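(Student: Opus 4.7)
The plan is to build an explicit injective homomorphism $\psi : M \to \pi(\mathcal{Q}(G))$ whose image is the subgroup we want. The crucial input is Lemma \ref{HinQ}, which says $\mathcal{H}(G) \subseteq \mathcal{Q}(G)$: this forces a natural map between the parameter sets of the two orbits. Because $\NHG = \HG \rtimes M$ is a split extension, every $m \in M$ represents a distinct coset $m\HG \in T(G)$, and $m \mapsto m\lG m^{-1}$ is a bijection from $M$ to $\mathcal{H}(G)$. Since $\pi(\mathcal{Q}(G))$ parameterizes $\mathcal{Q}(G)$ and the Zappa-Sz\'ep hypothesis gives $\pi(\mathcal{Q}(G)) \cap \HG = \{id_B\}$, each coset of $\HG$ in $\QHG$ meets $\pi(\mathcal{Q}(G))$ in exactly one element; in particular there is a unique $\beta_m \in \pi(\mathcal{Q}(G))$ with $\beta_m \HG = m\HG$, and I would set $\psi(m) = \beta_m$.

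Next I would verify $\psi$ is a homomorphism. Write $\beta_{m_i} = m_i h_i$ with $h_i \in \HG$. Since $\HG$ is normal in $\NHG$ (this is where the split-extension hypothesis bites), $h_1 m_2 = m_2 h_1'$ for some $h_1' \in \HG$, so
\[
\beta_{m_1}\beta_{m_2} = m_1 m_2 h_1' h_2 \in m_1 m_2 \HG = \beta_{m_1 m_2}\HG.
\]
Because $\pi(\mathcal{Q}(G))$ is assumed to be a subgroup, the product $\beta_{m_1}\beta_{m_2}$ also lies in $\pi(\mathcal{Q}(G))$. Applying again the fact that each coset $\beta\HG$ meets $\pi(\mathcal{Q}(G))$ in at most one point, we conclude $\beta_{m_1}\beta_{m_2} = \beta_{m_1 m_2}$, i.e.\ $\psi(m_1)\psi(m_2) = \psi(m_1 m_2)$.

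Finally, injectivity is immediate: $\psi(m)=\psi(m')$ forces $m\HG = m'\HG$, hence $m^{-1}m' \in M \cap \HG = \{id_B\}$ by the split hypothesis, so $m = m'$. Thus $\psi(M) \leq \pi(\mathcal{Q}(G))$ is a subgroup isomorphic to $M$.

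I do not expect any serious obstacle; the argument is essentially a dictionary translation once one notices that both $M$ and $\pi(\mathcal{Q}(G))$ function as systems of coset representatives for $\HG$ (in $\NHG$ and $\QHG$ respectively) and that $\mathcal{H}(G) \subseteq \mathcal{Q}(G)$ pairs $M$ with a distinguished subset of $\pi(\mathcal{Q}(G))$. The only subtlety is keeping straight that the multiplicativity of $\psi$ relies on the \emph{normality} of $\HG$ inside $\NHG$ (from the split extension), while the disjointness $\pi(\mathcal{Q}(G)) \cap \HG = \{id_B\}$ is what makes $\beta_m$ well-defined and makes the two expressions for $\beta_{m_1}\beta_{m_2}$ collapse to a single element.
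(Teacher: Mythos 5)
Your proposal is correct and follows essentially the same route as the paper: both arguments isolate the sub-transversal of $\pi(\mathcal{Q}(G))$ lying over the cosets of $\operatorname{Hol}(G)$ in $\NHG$ (using $\mathcal{H}(G)\subseteq\mathcal{Q}(G)$ and the fact that each coset meets $\pi(\mathcal{Q}(G))$ in exactly one point) and then use closure of $\pi(\mathcal{Q}(G))$ under multiplication to force that sub-transversal to be a group isomorphic to $M$. The only cosmetic difference is that you package this as an explicit injective homomorphism $\psi:M\to\pi(\mathcal{Q}(G))$, whereas the paper shows directly that the subset $\widetilde M$ parameterizing $\mathcal{H}(G)$ is closed and then identifies it with $M$ as a complement of $\operatorname{Hol}(G)$ in $\NHG$.
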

\begin{proof}
If $\pi(\mathcal{Q}(G))$ is a subgroup of $\QHG$ then, since it parameterizes $\mathcal{Q}(G)$ then, by regularity of the action on $\mathcal{Q}(G)$, it intersects $\operatorname{Hol}(G)$ trivially and contains a unique subset $\widetilde M$ which parameterizes $\mathcal{H}(G)$. Since $\pi(\mathcal{Q}(G))$ is a group, then $1\in\widetilde M$ and if $\beta\widetilde M$ parameterizes $N\in\mathcal{H}(G)$ then so does $\beta^{-1}$ and so $\beta^{-1}h\in\widetilde M$ for some $h\in \operatorname{Hol}(G)$ but then $\beta\beta^{-1}h=h\in\pi(\mathcal{Q}(G))$ which implies $h=1$. So the only check that needs to be made is that $\widetilde M$ is closed. However, this follows from \ref{ScapRloop} since if $\alpha,\beta\in\widetilde M$ then by definition of $\mathcal{H}(G)$, $\alpha\beta=\gamma h$ for some $\gamma\in\widetilde M$ and $h\in H$. However this would imply $\gamma^{-1}\alpha\beta=h\in\pi(\mathcal{Q}(G))$ implying that, again, $h=1$. Thus $\widetilde M\cong M$ where, by assumption, $\NHG$ contains a subgroup isomorphic to $M$. Note that $M\cong T(G)$ of course.
\end{proof}
As mentioned earlier, Miller \cite{Miller1908} showed that for $G=C_{p^n}$ for $p$ an odd prime, one has that $\mathcal{H}(G)=\{\lG\}$ only, i.e. $T(G)$ is trivial. As we shall see in the next section, the situation is quite different for $\mathcal{Q}(C_{p^n})$ and $Q\!\operatorname{Hol}(C_{p^n})$. 
\section{Mutually Normalizing Subgroups of $\operatorname{Hol}(C_{p^n})$}
The subgroups of the holomorph of a cyclic group of odd prime power order were studied in \cite{Kohl1998} as part of the enumeration of the Hopf-Galois structures on radical extensions of degree $p^n$. We will utilize some of the technical information therein, in particular to establish \ref{gen-cyc-SR} and \ref{cyc-mutual}. We shall also (temporarily) use the notational conventions in \cite{Kohl1998}.\par 
\noindent Let $\langle\sigma\rangle$ be the cyclic group of order $p^n$ and let $\langle\delta\rangle$ be its automorphism group where $\sigma^{\delta}=\sigma^{\pi}$ where $\pi$ is a primitive root mod $p^n$. As such, $\operatorname{Hol}(\langle\sigma\rangle)$ consists of order pairs of the form $(\sigma^r,\delta^s)$ where, of course, $(\sigma^{r_1},\delta^{s_1})(\sigma^{r_2},\delta^{s_2})=(\sigma^{r_1+\pi^{s_1}r_2},\delta^{s_1+s_2})$. 
\begin{lemma}
\label{gen-cyc-SR}
For $\langle\sigma\rangle$ as given above, an element of $\mathcal{S}\cap\mathcal{R}$ is generated by $(\sigma^i,\delta^{p^k(p-1)})$ for $i\in U_{p^{n-1-k}}$ where $n\leq 2k+2$ i.e. $k=m\!-\!1..n\!-\!1$ where $m=[\frac{n}{2}]$.
\end{lemma}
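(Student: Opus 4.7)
The plan is to parameterize the regular cyclic subgroups $N \le \operatorname{Hol}(\langle\sigma\rangle)$ of order $p^n$ by their generators $(\sigma^i,\delta^j)$ and to convert the condition that $\lambda(\langle\sigma\rangle) = \langle(\sigma,1)\rangle$ normalize $N$ into a $p$-adic congruence in $i$ and $j$.

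First I would pin down the form of the generator. From the semidirect-product law one derives
$$
(\sigma^a,\delta^b)^m \;=\; \bigl(\sigma^{a\,S_m(\pi^b)},\delta^{mb}\bigr),\qquad S_m(x) := 1+x+\cdots+x^{m-1}.
$$
Requiring the order to equal $p^n$ forces $(p-1)\mid b$ via the second coordinate; writing $b = p^k(p-1)c$ with $\gcd(c,p)=1$ and replacing the generator by its $c^{-1}$-power (a coprime power, hence the same cyclic subgroup) normalizes to $b = p^k(p-1)$ with $0 \le k \le n-1$. Lifting-the-exponent then gives $v_p(\pi^{p^k(p-1)}-1) = k+1$, and the first-coordinate order condition collapses to $\gcd(i,p)=1$.

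Next I would impose the normalizer condition. A direct computation in the semidirect product yields
$$
(\sigma,1)(\sigma^i,\delta^{p^k(p-1)})(\sigma,1)^{-1} \;=\; (\sigma^{1+i-q},\delta^{p^k(p-1)}),\qquad q := \pi^{p^k(p-1)}.
$$
For this to equal $(\sigma^i,\delta^{p^k(p-1)})^m$, matching the second coordinate forces $m \equiv 1 \pmod{p^{n-1-k}}$, say $m = 1 + tp^{n-1-k}$, and matching the first yields the key congruence
$$
1 - q \;\equiv\; i\bigl(S_m(q) - 1\bigr) \pmod{p^n}.
$$

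The main obstacle is evaluating $S_m(q)$ modulo $p^n$. Writing $q = 1 + p^{k+1}u$ with $u \in U_p$, expanding $q^j$ binomially, and applying the hockey-stick identity $\sum_{j=0}^{m-1}\binom{j}{r} = \binom{m}{r+1}$ gives
$$
S_m(q) \;=\; m + p^{k+1}u\binom{m}{2} + p^{2(k+1)}u^2\binom{m}{3} + \cdots,
$$
and using $m-1 = tp^{n-1-k}$ together with $(r+1)\binom{m}{r+1} = m\binom{m-1}{r}$, every summand past $m$ has $p$-valuation at least $n$, so $S_m(q) \equiv m \pmod{p^n}$. The congruence therefore reduces to $-p^{k+1}u \equiv it\,p^{n-1-k} \pmod{p^n}$: if $n > 2k+2$, the left side has valuation exactly $k+1$ while the right has valuation at least $n-1-k > k+1$, forcing $u \equiv 0 \pmod p$, impossible; if $n \le 2k+2$---equivalently $k \ge [n/2]-1$---the congruence is solvable for $t$ modulo $p^{k+1}$ for every unit $i$. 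The same identity $S_m(q) \equiv m$ shows that two generators at a common $k$ determine the same $N$ precisely when their $i$'s agree modulo $p^{n-1-k}$, so the distinct $N$'s at level $k$ are parametrized by $i \in U_{p^{n-1-k}}$.
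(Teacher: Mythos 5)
Your argument is correct, and it is worth saying up front that the paper does not actually prove this lemma---its ``proof'' is a one-line citation of Lemma 3.2 of \cite{Kohl1998}---so what you have written supplies the computation that the paper outsources. All the steps check: the power formula $(\sigma^a,\delta^b)^m=(\sigma^{aS_m(\pi^b)},\delta^{mb})$ is right; forcing the order to be $p^n$ does give $b=p^k(p-1)$ after passing to a coprime power, together with $\gcd(i,p)=1$; lifting-the-exponent gives $v_p(q-1)=k+1$ for $q=\pi^{p^k(p-1)}$; the second coordinate of the conjugate does force $m\equiv 1\pmod{p^{n-1-k}}$; and the crux, $S_m(q)\equiv m\pmod{p^n}$ for such $m$, is justified by your hockey-stick expansion (note the $r=1$ term $(q-1)\binom{m}{2}$ has valuation $(k+1)+v_p(m-1)\ge n$ only because $v_p(2)=0$, i.e.\ because $p$ is odd, which is the setting of this section). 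The resulting congruence $-p^{k+1}u\equiv itp^{n-1-k}\pmod{p^n}$ is solvable in $t$ exactly when $n\le 2k+2$, and the same identity $S_m(q)\equiv m$ gives the parameterization by $i\in U_{p^{n-1-k}}$; summing $\phi(p^{n-1-k})$ over the admissible $k$ gives $p^{[n/2]}$, consistent with \ref{SRpn}. Two small remarks. First, your gloss ``equivalently $k\ge [n/2]-1$'' is off by one when $n$ is odd (the integer form of $n\le 2k+2$ is $k\ge\lceil n/2\rceil-1$); this slip is already present in the lemma's own phrase ``$k=m-1..n-1$'', and your operative condition $n\le 2k+2$---the one your valuation argument actually uses---is the correct one. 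Second, for the converse inclusion you should add one line observing that each $\langle(\sigma^i,\delta^{p^k(p-1)})\rangle$ of order $p^n$ is automatically regular (the stabilizer of the identity is trivial, since $v_p(iS_m(q))=v_p(m)<n$ for $0<m<p^n$), so that these subgroups genuinely lie in $\mathcal{S}$.
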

\begin{proof}A consequence of Lemma 3.2 in \cite{Kohl1998}.
\end{proof}
\noindent As such, we have the following.\par\par
\begin{proposition} 
\label{cyc-mutual}
For $G=C_{p^n}$ we have that $\mathcal{Q}(G)=\mathcal{S}(G)\cap\mathcal{R}(G)$.
\end{proposition}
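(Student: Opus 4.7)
The inclusion $\mathcal{Q}(G)\subseteq\mathcal{S}(G)\cap\mathcal{R}(G)$ is immediate from the definition of $\mathcal{Q}(G)$, so the task reduces to the reverse inclusion: I would show that any two members of $\mathcal{S}(G)\cap\mathcal{R}(G)$ normalize each other. Once this mutual-normalization is in hand, every $M\in\mathcal{S}(G)\cap\mathcal{R}(G)$ lies in $\mathcal{S}(N)$ for every $N\in\mathcal{S}(G)\cap\mathcal{R}(G)$, which is exactly the membership criterion for $\mathcal{Q}(G)$.

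The plan is to work with the explicit generators supplied by \ref{gen-cyc-SR}. Every $N\in\mathcal{S}(G)\cap\mathcal{R}(G)$ is cyclic, generated by an element of the form $g_{i,k}=(\sigma^i,\delta^{p^k(p-1)})$ with $i\in U_{p^{n-1-k}}$ and $m-1\le k\le n-1$. Take two such generators $g_{i,k}$ and $g_{j,\ell}$ producing subgroups $N_1,N_2$. Using the holomorph multiplication $(a,\alpha)(b,\beta)=(a\alpha(b),\alpha\beta)$ (with $\delta$ acting on $\sigma$ by $\sigma\mapsto\sigma^\pi$) I would compute
\[
g_{i,k}\,g_{j,\ell}\,g_{i,k}^{-1}=\bigl(\sigma^{\,i+\pi^{p^k(p-1)}j-\pi^{p^\ell(p-1)}i},\;\delta^{p^\ell(p-1)}\bigr),
\]
using that $\delta^{p^k(p-1)}$ and $\delta^{p^\ell(p-1)}$ commute. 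The automorphism coordinate already matches the generator of $N_2$, so the task reduces to showing that the $\sigma$-coordinate is congruent mod $p^n$ to the $\sigma$-coordinate of some power $g_{j,\ell}^{\,r}$ with $r\equiv 1\pmod{p^{n-1-\ell}}$ (the latter ensuring the $\delta$-coordinates agree). Computing $g_{j,\ell}^{\,r}$ via the usual telescoping in a semidirect product, its $\sigma$-exponent is $j(\pi^{rp^\ell(p-1)}-1)/(\pi^{p^\ell(p-1)}-1)$; the required $r$ then exists provided a single congruence in $\mathbb{Z}/p^n$ is solvable.

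The main obstacle is showing that this congruence is always solvable under the hypothesis $k,\ell\ge m-1$, i.e.\ $n\le 2k+2$ and $n\le 2\ell+2$. Here one invokes the $p$-adic valuations available in \cite{Kohl1998}: since $\pi$ is a primitive root mod $p^n$, the valuation of $\pi^{p^k(p-1)}-1$ is exactly $k+1$, and similarly for $\ell$. The constraints force $k+\ell+2\ge n$, which provides precisely the slack needed for the quantity $(\pi^{p^k(p-1)}-1)i-(\pi^{p^\ell(p-1)}-1)j\cdot[\text{unit}]$ to vanish modulo a sufficiently high power of $p$; that is exactly what identifies the conjugate as a power of $g_{j,\ell}$. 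Symmetrizing in $(i,k)\leftrightarrow(j,\ell)$ shows $N_1$ normalizes $N_2$ and vice versa.

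Finally, having shown every pair in $\mathcal{S}(G)\cap\mathcal{R}(G)$ mutually normalizes, for each $M\in\mathcal{S}(G)\cap\mathcal{R}(G)$ and each $N\in\mathcal{S}(G)\cap\mathcal{R}(G)$ we have $N\le\operatorname{Norm}_B(M)$, i.e.\ $N\in\mathcal{S}(M)$. This is the condition defining membership of $M$ in $\mathcal{Q}(G)$, and combined with the containment $\mathcal{Q}(G)\subseteq\mathcal{S}(G)\cap\mathcal{R}(G)$ we conclude $\mathcal{Q}(G)=\mathcal{S}(G)\cap\mathcal{R}(G)$. I would expect the verification of the $p$-adic valuation estimate to be the only delicate step; everything else is bookkeeping using the explicit parametrization from \ref{gen-cyc-SR}.
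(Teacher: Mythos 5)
Your proposal is correct and takes the same route as the paper: the paper's entire proof of this proposition is the single line ``Direct calculation using the above lemma,'' and your conjugation computation in $\operatorname{Hol}(\langle\sigma\rangle)$ together with the valuation estimate $k+\ell+2\ge 2m\ge n$ is precisely that direct calculation, carried out using the generators from \ref{gen-cyc-SR}. If anything, your write-up supplies the details the paper omits.
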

\begin{proof}
Direct calculation using the above lemma.
\end{proof}
For $G\cong C_{2^n}$ we have $\operatorname{Aut}(G)\cong\mathbb{Z}_{2^n}^{*}$ which, of course, is non-cyclic, which contrasts with the odd prime power case studied above. However, it turns out that the enumeration is nearly identical. In \cite{Byott2007}, Byott determined $\mathcal{S}(G)$ which we give here.
\begin{lemma}\cite[Lemma 7.3]{Byott2007} 
Let $G=\langle\sigma\rangle$ by cyclic of order $2^n$ and for $s\in\mathbb{Z}_{2^n}^{*}$ let $\delta_s\in \operatorname{Aut}(G)$ be that automorphism such that $\delta_s\sigma\delta_s^{-1}=\sigma^s$. Then $\operatorname{Hol}(G)$ contains exactly $2^{n-2}$ cyclic subgroups of order $2^n$, namely $\langle(\sigma,\delta_s)\rangle$ for each $s$ where $s\equiv 1(mod\ 4)$.
\end{lemma}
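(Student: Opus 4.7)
The plan is to work inside the explicit affine description $\operatorname{Hol}(G) \cong \mathbb{Z}_{2^n} \rtimes \mathbb{Z}_{2^n}^{*}$ with multiplication $(\sigma^{r_1}, \delta_{s_1})(\sigma^{r_2}, \delta_{s_2}) = (\sigma^{r_1 + s_1 r_2}, \delta_{s_1 s_2})$, and to derive by induction the power formula $(\sigma^r, \delta_s)^k = (\sigma^{r T_k(s)}, \delta_{s^k})$, where $T_k(s) = 1 + s + s^2 + \cdots + s^{k-1}$. The lemma then becomes a counting problem: count the elements of $\operatorname{Hol}(G)$ of order exactly $2^n$ and divide by $\phi(2^n) = 2^{n-1}$.

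The heart of the argument is a case analysis on $s \bmod 4$, hinging on the lifting-the-exponent identity $v_2(s^k - 1) = v_2(s - 1) + v_2(k)$, which is valid when $s \equiv 1 \pmod 4$. For such $s$ (including $s = 1$), setting $a = v_2(s - 1) \geq 2$ gives $\operatorname{ord}(s) = 2^{n - a}$ in $\mathbb{Z}_{2^n}^{*}$, and via $T_k(s)(s - 1) = s^k - 1$ one has $v_2(T_{2^{n-a}}(s)) = n - a$. Writing $k = 2^{n-a} j$ and solving $r T_k(s) \equiv 0 \pmod{2^n}$ in $j$ then shows that the order of $(\sigma^r, \delta_s)$ equals $2^n / \gcd(r, 2^a)$, which is $2^n$ precisely when $r$ is odd. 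For $s \equiv 3 \pmod 4$ LTE does not apply directly to $s$, so I would invoke the telescoping identity $T_{2^j}(s) = (1 + s)(1 + s^2)(1 + s^4) \cdots (1 + s^{2^{j-1}})$ together with $v_2(1 + s) \geq 2$ and $v_2(1 + s^{2^i}) = 1$ for $i \geq 1$ (the latter from LTE applied to $s^2 \equiv 1 \pmod 8$), producing $v_2(T_{2^{n-1}}(s)) \geq n$ and hence $(\sigma^r, \delta_s)^{2^{n-1}} = 1$ for every $r$; thus no such element reaches order $2^n$.

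Counting then yields $2^{n-1} \cdot 2^{n-2} = 2^{2n-3}$ elements of order $2^n$ (one choice of odd $r$, one of $s \equiv 1 \pmod 4$), hence $2^{2n-3}/2^{n-1} = 2^{n-2}$ cyclic subgroups of order $2^n$. To identify these subgroups as $\langle (\sigma, \delta_s) \rangle$ for $s \equiv 1 \pmod 4$, it suffices to show that distinct such $s$ yield distinct cyclic subgroups. If the cyclic subgroups $\langle s_1 \rangle, \langle s_2 \rangle$ of $\mathbb{Z}_{2^n}^{*}$ differ, then the projections of the two holomorph subgroups to $\operatorname{Aut}(G)$ already differ. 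Otherwise $s_1 = s_2^k$ for some $k$, and demanding $(\sigma, \delta_{s_2})^k = (\sigma, \delta_{s_1})$ forces $T_k(s_2) \equiv 1 \pmod{2^n}$; multiplying by $s_2 - 1$ via the basic identity yields $s_1 - 1 \equiv s_2 - 1 \pmod{2^n}$, so $s_1 = s_2$. Since we have exhibited $2^{n-2}$ distinct subgroups matching the total count, they exhaust the cyclic subgroups of order $2^n$.

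The hardest step will be the $s \equiv 3 \pmod 4$ case, because the non-cyclicity of $\mathbb{Z}_{2^n}^{*}$ rules out a direct LTE on $s$ and forces the reduction to $s^2$; one must carefully distinguish the large-valuation first factor $1 + s$ from the valuation-one tail $1 + s^{2^i}$ in the telescoping product. This asymmetry between $s \equiv 1$ and $s \equiv 3 \pmod 4$ is the delicate 2-adic phenomenon that pins the count down at $2^{n-2}$ rather than the naive $2^{n-1}$.
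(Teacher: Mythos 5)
Your proposal is correct. Note, however, that the paper offers no proof of this statement at all: it is imported verbatim as a citation to Byott's Lemma 7.3, so there is nothing in the text to compare your argument against. What you have written is a complete, self-contained elementary verification, and every step checks out: the power formula $(\sigma^r,\delta_s)^k=(\sigma^{rT_k(s)},\delta_{s^k})$ follows from the semidirect product multiplication used in the paper; the identity $(s-1)T_k(s)=s^k-1$ together with $v_2(s^k-1)=v_2(s-1)+v_2(k)$ for $s\equiv 1\pmod 4$ correctly gives order $2^n$ exactly when $r$ is odd; the telescoping factorization $T_{2^j}(s)=\prod_{i=0}^{j-1}(1+s^{2^i})$ with $v_2(1+s)\geq 2$ and $v_2(1+s^{2^i})=1$ for $i\geq 1$ correctly kills the $s\equiv 3\pmod 4$ case; and the count $2^{2n-3}/\varphi(2^n)=2^{n-2}$ plus the distinctness argument (where $T_k(s_2)\equiv 1$ forces $s_2^k\equiv s_2$) pins down the subgroups as exactly the $\langle(\sigma,\delta_s)\rangle$. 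Two trivial caveats worth a sentence in a final write-up: the statement implicitly assumes $n\geq 2$ (the paper handles $n=1,2$ separately in the surrounding discussion), and in the distinctness step the multiplication by $s_2-1$ degenerates when $s_2=1$, though that case is immediate since $T_k(1)=k$.
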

Now, for $n=1,2$ the only cyclic subgroups of order $2^n$ of $\operatorname{Hol}(G)$ are $G=\langle\sigma\rangle$ itself. And for $n\geq 3$ the set of those $s\in U_{2^n}$ for which $s\equiv 1(mod\ 4)$ lie in the cyclic subgroup generated by $5$, which has order $2^{n-2}$. That is, $\mathcal{S}(G)=\{N_s\ |\ \ s\in\langle 5\rangle\}$ where $N_s=\langle (\sigma,\delta_s)\rangle$.\par
\noindent We also note that, unlike the odd prime-power case, $T(C_{2^n})\cong C_{2}$ for $n\geq 3$.\par
\noindent As to $\mathcal{Q}=\mathcal{S}\cap\mathcal{R}$ we consider when  $N_s$ is normalized by $G=\langle(\sigma,I)\rangle$. A straightforward calculation yields the following
\begin{proposition}
\label{2Q}
For $G=\langle\sigma\rangle\cong C_{2^n}$, for $n\geq 3$, if $r=[\frac{n-3}{2}]$ then
$$
\mathcal{Q}(G)=\{N_s\ |\ s\in\langle 5^{2^{r}})\rangle\}
$$
and $\mathcal{H}(G)=\{N_1,N_{5^{2^{n-3}}}\}$.
\end{proposition}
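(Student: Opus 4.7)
The plan is to reduce the entire statement to explicit 2-adic calculations inside $\operatorname{Hol}(G)=\langle\sigma\rangle\rtimes\operatorname{Aut}(G)$, where $\operatorname{Aut}(G)\cong\langle\delta_{-1}\rangle\times\langle\delta_5\rangle\cong C_2\times C_{2^{n-2}}$. Byott's lemma already gives $\mathcal{S}(G)=\{N_s \mid s\in\langle 5\rangle\}$ with $N_s=\langle(\sigma,\delta_s)\rangle$ cyclic of order $2^n$. Since both $G$ and $\operatorname{Aut}(G)$ are abelian, the conjugation formula in $\operatorname{Hol}(G)$ collapses to $(g,\alpha)(h,\beta)(g,\alpha)^{-1}=(g\,\alpha(h)\,\beta(g^{-1}),\beta)$, preserving the second coordinate. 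Hence $N_s$ is normalized by $\gamma\in\operatorname{Hol}(G)$ iff the conjugate of the generator $(\sigma,\delta_s)$ by $\gamma$ already lies in $N_s$.

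Write $s=5^j$ with $j=2^t v$ and $v$ odd (with the convention $t=n-2$ when $s=1$), so $\delta_s$ has order $2^{n-2-t}$. The one substantive calculation is to identify the elements of $N_s$ with second coordinate $\delta_s$ as exactly the coset $\{(\sigma^a,\delta_s) \mid a\equiv 1\pmod{2^{n-2-t}}\}$; this follows by expanding the power sum $[k]_s=(s^k-1)/(s-1)=k+\binom{k}{2}(s-1)+\binom{k}{3}(s-1)^2+\cdots$ for $k\equiv 1\pmod{|\delta_s|}$ and invoking the classical identity $v_2(5^j-1)=v_2(j)+2$ to control 2-adic valuations term by term. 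With this coset description in hand, the computation $(\sigma,I)(\sigma,\delta_s)(\sigma,I)^{-1}=(\sigma^{2-s},\delta_s)$ reduces the condition $N_s\in\mathcal{S}(G)\cap\mathcal{R}(G)$ to $v_2(s-1)\geq n-2-t$, i.e.\ $t+2\geq n-2-t$, i.e.\ $t\geq r=\lfloor(n-3)/2\rfloor$, giving $\mathcal{S}(G)\cap\mathcal{R}(G)=\{N_s \mid s\in\langle 5^{2^r}\rangle\}$.

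To upgrade to $\mathcal{Q}(G)$ I would verify mutual normalization directly: the conjugate of $(\sigma,\delta_{s'})$ by $(\sigma,\delta_s)$ is $(\sigma^{1+s-s'},\delta_{s'})$, which lies in $N_{s'}$ iff $v_2(s-s')\geq n-2-t'$. Writing $s=5^{2^r a}$ and $s'=5^{2^r a'}$, the valuation identity yields $v_2(s-s')=r+2+v_2(a-a')$, and a short check using the defining inequality $2r\geq n-4$ confirms this bound always suffices, so $\mathcal{Q}(G)=\mathcal{S}(G)\cap\mathcal{R}(G)$. Finally, $N_s\in\mathcal{H}(G)$ iff $N_s\triangleleft\operatorname{Hol}(G)$, equivalently iff $N_s$ is also normalized by $(1,\delta_{-1})$ and $(1,\delta_5)$; the same coset description turns these into $v_2(-2)\geq n-2-t$ and $v_2(4)\geq n-2-t$, the binding constraint being $t\geq n-3$, which is satisfied only by $s=1$ (where $t=n-2$) and by the unique order-$2$ element $s=5^{2^{n-3}}$ of $\langle 5\rangle$. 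The main obstacle throughout is the 2-adic bookkeeping in the coset description of $N_s$; once that is pinned down, everything else reduces to comparing valuations.
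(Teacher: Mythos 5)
Your argument is correct and is essentially the calculation the paper has in mind: the paper offers no details beyond ``a straightforward calculation'' following Byott's enumeration $\mathcal{S}(G)=\{N_s=\langle(\sigma,\delta_s)\rangle\}$, and your 2-adic bookkeeping (the coset description of the fibre of $N_s$ over $\delta_s$, which also follows at once from the fact that $N_s\cap\lambda(G)$ is the unique subgroup of $\langle\sigma\rangle$ of order $2^{t+2}$, together with $v_2(5^j-1)=v_2(j)+2$) is exactly what that calculation amounts to. You in fact do slightly more than the paper makes explicit, by verifying the mutual normalization $(\sigma,\delta_s)(\sigma,\delta_{s'})(\sigma,\delta_s)^{-1}=(\sigma^{1+s-s'},\delta_{s'})\in N_{s'}$ needed to pass from $\mathcal{S}(G)\cap\mathcal{R}(G)$ to $\mathcal{Q}(G)$, a step the paper leaves implicit in the phrase ``As to $\mathcal{Q}=\mathcal{S}\cap\mathcal{R}$.''
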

What we find then is that, for $n\geq 3$, $|\mathcal{Q}(G)|=2^{[\frac{n}{2}]}$.\par 
\noindent As to the (possible) group structure on a $\pi(\mathcal{Q}(G))$ we can formulate an 'explicit' formulation of the generators of each $N\in\mathcal{Q}(C_{p^n})$. Going forward we shall study $\mathcal{Q}(C_{p^n})$ for $C_{p^n}$ embedded in the ambient symmetric group $S_{p^n}$ as the regular subgroup $\langle(1,\dots,p^n)\rangle$ and look at regular subgroups of $\operatorname{Hol}(C_{p^n})$ that are also cyclic of order $p^n$. We will be guided, initially, by the fact that we know how many groups reside in $\mathcal{Q}(C_{p^n})$ for $p=2$ and $p>2$ although there will be some differences to the formulation of $\pi(\mathcal{Q}(C_{p^n}))$ for $p>2$ versus $p=2$.
\begin{definition}
If $p$ is prime and $\sigma=(1,\dots,p^n)$ then $\sigma^{p^{n-m}}=\sigma_1\cdots\sigma_{p^{n-m}}$ where 
$$
\sigma_i=(i,i+p^{n-m},\dots,i+(p^m-1)p^{n-m})
$$
for $i=1,\dots,p^{n-m}$ and define two other elements
\begin{align*}
\ds\gamma&=\prod_{i=1}^{p^{n-m}}\sigma_i^{i-1}\\
\ds\beta&=\ds\prod_{i=1}^{p^{n-m}}\sigma_i^{t_{i-1}}\\
\end{align*}
where $t_{j}=\frac{j(j+1)}{2}$.
\end{definition}
The element $\gamma$ is the key to constructing elements of $\mathcal{Q}(C_{p^n})$, and we begin with a number of facts about how it acts on $\sigma$.
\begin{lemma}
\label{uaut}
For $\sigma$, $\gamma$ as defined above, we have that $\gamma\in \operatorname{Aut}(\langle\sigma\rangle)$ and $\gamma\sigma\gamma^{-1}=\sigma^{u}$ for $u=1+p^{n-m}\in U_{p^n}$, for any prime $p$.
\end{lemma}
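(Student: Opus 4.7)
The plan is to reduce both claims to direct modular arithmetic by identifying $\{1,\dots,p^n\}$ with $\mathbb{Z}/p^n\mathbb{Z}$, so that $\sigma(j)=j+1\pmod{p^n}$. Since the cycles $\sigma_i$ are pairwise disjoint and each $\sigma_i$ advances its orbit $O_i=\{j : j\equiv i\pmod{p^{n-m}}\}$ by $+p^{n-m}$, we obtain the closed form $\gamma(j)\equiv j+(i-1)p^{n-m}\pmod{p^n}$ whenever $j\in O_i$. In particular, $1\in O_1$ gives $\gamma(1)=1$, so once $\gamma$ is seen to normalize $\langle\sigma\rangle$, Hall's description of $\operatorname{Hol}(\langle\sigma\rangle)$ places $\gamma$ in $A(\langle\sigma\rangle)=\operatorname{Aut}(\langle\sigma\rangle)$.

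For the normalizing claim, I would compute $(\gamma\sigma\gamma^{-1})(j)=\gamma(\sigma(\gamma^{-1}(j)))$ for $j\in O_i$. The inverse step $\gamma^{-1}(j)\equiv j-(i-1)p^{n-m}\pmod{p^n}$ remains in $O_i$; then $\sigma$ adds $1$, sending the point into $O_{i+1}$ (with subscripts read cyclically modulo $p^{n-m}$); and finally $\gamma$ on the new orbit adds the corresponding offset $((i+1)-1)p^{n-m}=ip^{n-m}$. For $i<p^{n-m}$ the three contributions telescope immediately to $j+1+p^{n-m}=\sigma^u(j)$ with $u=1+p^{n-m}$.

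The one delicate case is the wrap-around $i=p^{n-m}$: here the intermediate point $\sigma(\gamma^{-1}(j))$ lies in $O_1$, on which $\gamma$ acts trivially, so we must verify that $\gamma^{-1}(j)+1$ still equals $j+1+p^{n-m}$ modulo $p^n$. This reduces to the identity $-(p^{n-m}-1)p^{n-m}\equiv p^{n-m}\pmod{p^n}$, i.e.\ to $p^{2(n-m)}\equiv 0\pmod{p^n}$, which in the range relevant to the construction is the one arithmetic fact driving the proof, and it is the main (minor) obstacle to clean bookkeeping. With this in hand, $\gamma\sigma\gamma^{-1}=\sigma^u$ holds on all of $\{1,\dots,p^n\}$; and since $\gcd(1+p^{n-m},p)=1$ we have $u\in U_{p^n}$, so the identity exhibits $\gamma$ as normalizing $\langle\sigma\rangle$ and inducing the unit-power automorphism $\sigma\mapsto\sigma^u$. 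Combined with $\gamma(1)=1$, this places $\gamma$ in $\operatorname{Aut}(\langle\sigma\rangle)$ and completes the proof.
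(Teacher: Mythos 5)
Your proof is correct and follows essentially the same route as the paper's: a direct pointwise computation on the residue classes $O_i$ modulo $p^{n-m}$ showing $\gamma\sigma\gamma^{-1}=\sigma^{1+p^{n-m}}$, followed by the observation that $\gamma(1)=1$ places $\gamma$ in the point stabilizer $A(\langle\sigma\rangle)=\operatorname{Aut}(\langle\sigma\rangle)$ of Hall's decomposition. If anything you are slightly more careful than the paper, which leaves the wrap-around case $i=p^{n-m}$ implicit; your isolation of the congruence $p^{2(n-m)}\equiv 0\pmod{p^n}$ (valid since $m=[\tfrac{n}{2}]$ gives $2(n-m)\geq n$) makes that step explicit.
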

\begin{proof}
By direct calculation 
\begin{align*}
\gamma\sigma(i+kp^{n-m})&=\gamma(i+1+kp^{n-m})\\
                        &=i+1+(k+i)p^{n-m})\\
\sigma^{u}\gamma(i+kp^{n-m})&=\sigma^u(i+(k+(i-1))p^{n-m})\\
                            &=i+u+(k+(i-1))p^{n-m}
\end{align*}
and so $\gamma\sigma(i+kp^{n-m})=\sigma^{u}\gamma(i+kp^{n-m})$ for all $i,k$ provided $1=u-p^{n-m}$, that is $u=1+p^{n-m}$. The presentation of $\gamma$ being an element of $B=Perm({1,\dots,p^n})$ where $\langle\sigma\rangle$ is patently a regular subgroup implies that $\gamma\in \operatorname{Aut}(\langle\sigma\rangle)$ where we view $\operatorname{Aut}(\langle\sigma\rangle)$ as a subgroup of $\operatorname{Norm}_B(\langle\sigma\rangle)$. We note also that $\gamma(1)=1$ since $\sigma_1^{1-1}=()$. 
\end{proof}
\noindent We shall use $\gamma$ to construct $\mathcal{S}\cap\mathcal{R}$ but first we need the following technical fact to establish two properties about the unit $u$ above, which are provable by a routine induction argument.
\begin{lemma}
\label{valuation}
For $p$ prime, and $u=1+p^{n-m}\in U_{p^n}$, if $v_{p}(x)$ denotes the $p$-valuation of an integer $x$ we have:\par
\noindent (a) $v_{p}(u^k-1)=\begin{cases} m+v_{p}(k) \ \text{if $n$ even} \\ 
                                m+1+v_{p}(k) \ \text{if $n$ odd}
                  \end{cases}$, and\par
\noindent (b) $v_{p}(1+u^e+\dots+u^{e(t-1)})=v_{p}(t)$ for any positive integers $e,t$.\par
\noindent (c) $|u|=2^m$.
\end{lemma}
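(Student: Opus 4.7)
The plan is to prove (a) by induction on $a := v_p(k)$, and then derive (b) and (c) as algebraic consequences. Write $k = p^a\ell$ with $\gcd(\ell,p)=1$; since $n - m$ equals $m$ for $n$ even and $m+1$ for $n$ odd, it suffices to show $v_p(u^{p^a\ell} - 1) = (n-m) + a$ for every such $a$ and $\ell$. For the base case $a = 0$, factor $u^\ell - 1 = (u-1)(1 + u + \cdots + u^{\ell-1})$; since $u \equiv 1 \pmod{p^{n-m}}$, each summand in the bracket is $\equiv 1 \pmod p$, so the sum is $\equiv \ell \not\equiv 0 \pmod p$ and $v_p(u^\ell - 1) = v_p(u-1) = n - m$.

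For the inductive step, setting $y := u^{p^a\ell}$ and writing $y^p - 1 = (y-1)(1 + y + \cdots + y^{p-1})$, the task reduces to showing the second factor has $p$-valuation exactly $1$. For $p$ odd, expand $y^i \equiv 1 + i(y-1) \pmod{(y-1)^2}$ to obtain
$$
\sum_{i=0}^{p-1} y^i \equiv p + \tfrac{p(p-1)}{2}(y-1) \pmod{(y-1)^2},
$$
where both correction terms carry $v_p \geq 2$ (using $v_p(y-1) = (n-m)+a \geq 1$) while the leading $p$ gives $v_p = 1$. The main delicate point is $p = 2$, where the standard lifting-the-exponent step can fail without a congruence hypothesis; here we factor $y^2 - 1 = (y-1)(y+1)$, and use that $v_2(y-1) \geq n - m \geq 2$ in the range of interest forces $y \equiv 1 \pmod 4$, whence $v_2(y+1) = 1$ exactly. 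Thus in both cases $v_p(u^{p^{a+1}\ell} - 1) = v_p(u^{p^a\ell}-1) + 1$, completing the induction and establishing (a).

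With (a) in hand, (b) follows from the identity
$$
1 + u^e + u^{2e} + \cdots + u^{e(t-1)} = \frac{u^{et} - 1}{u^e - 1},
$$
whose $p$-valuation is $[(n-m) + v_p(et)] - [(n-m) + v_p(e)] = v_p(t)$. For (c), the order of $u$ in $U_{p^n}$ is the least $k$ with $v_p(u^k - 1) \geq n$, which by (a) is equivalent to $v_p(k) \geq m$; the minimum is $k = p^m$, so $|u| = p^m$ (which reads as $2^m$ in the regime $p=2$ of the preceding discussion).
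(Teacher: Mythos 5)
Your proof is correct, and it is precisely the ``routine induction argument'' that the paper asserts without writing out: a lifting-the-exponent induction on $v_p(k)$ for part (a), with (b) and (c) then falling out of the telescoping quotient $\frac{u^{et}-1}{u^e-1}$ and the condition $v_p(u^k-1)\geq n$. You also correctly isolate the only genuinely delicate points, namely that the $p=2$ step needs $n-m\geq 2$ (which holds in the regime $n\geq 3$ actually used in the paper) and that the stated order $2^m$ in (c) should be read as $p^m$.
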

\noindent We can now determine the order of $\gamma^e\sigma$.
\begin{lemma}
For $p$ prime, and $e\in\mathbb{Z}_{p^m}$, one has $|\gamma^e\sigma|=p^n$.
\end{lemma}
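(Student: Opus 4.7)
The plan is to compute $|\gamma^e\sigma|$ by placing every power of $\gamma^e\sigma$ into a normal form inside $\operatorname{Hol}(\langle\sigma\rangle)$ and reading off when that normal form is trivial. The starting point is the twisted commutation relation from Lemma \ref{uaut}, $\gamma\sigma\gamma^{-1}=\sigma^u$, which iterates to $\gamma^j\sigma^c\gamma^{-j}=\sigma^{cu^j}$ and in particular $\gamma^e\sigma=\sigma^{u^e}\gamma^e$. A routine induction on $k$, whose inductive step uses $\gamma^{ke}\sigma^{u^e}=\sigma^{u^{(k+1)e}}\gamma^{ke}$ together with the algebraic identity $u^eS_k+u^{(k+1)e}=u^eS_{k+1}$, then yields
$$(\gamma^e\sigma)^k \;=\; \sigma^{u^e S_k}\,\gamma^{ke},\qquad S_k := 1+u^e+u^{2e}+\cdots+u^{(k-1)e}.$$

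Next I would note that since $\langle\sigma\rangle\cap\operatorname{Aut}(\langle\sigma\rangle)=\{1\}$ inside $\operatorname{Hol}(\langle\sigma\rangle)$, an element $\sigma^a\gamma^b$ is trivial if and only if $\sigma^a=1$ and $\gamma^b=1$ separately; this decouples the analysis into a $\sigma$-exponent problem and a $\gamma$-exponent problem. The case $e=0$ in $\mathbb{Z}_{p^m}$ is immediate since $\gamma^e\sigma=\sigma$ has order $p^n$, so assume $e\neq 0$ from here on. To verify $|\gamma^e\sigma|\mid p^n$, apply the formula with $k=p^n$: the $\gamma$-factor $\gamma^{p^n e}$ is trivial because $|\gamma|=|u|$ is a power of $p$ dividing $p^n$ (by Lemma \ref{valuation}(c)), and for the $\sigma$-factor, since $u^e$ is a unit modulo $p^n$, divisibility $p^n\mid u^e S_{p^n}$ reduces to $p^n\mid S_{p^n}$, which holds by Lemma \ref{valuation}(b) with $t=p^n$ since $v_p(S_{p^n})=v_p(p^n)=n$.

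Finally I would show $(\gamma^e\sigma)^{p^{n-1}}\neq 1$, which combined with the divisibility forces the equality $|\gamma^e\sigma|=p^n$. Applying Lemma \ref{valuation}(b) with $t=p^{n-1}$ gives $v_p(S_{p^{n-1}})=n-1<n$, so $p^n\nmid u^e S_{p^{n-1}}$ and the $\sigma$-component of the normal form is nontrivial; by the decoupling observation this alone suffices, regardless of what happens to the $\gamma$-component. The only genuine obstacle is the bookkeeping to establish the closed-form identity for $(\gamma^e\sigma)^k$; once that is in hand, the two applications of Lemma \ref{valuation}(b) at $t=p^n$ and $t=p^{n-1}$ close out both directions of the order equality.
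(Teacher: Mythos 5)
Your proposal is correct and follows essentially the same route as the paper: rewrite $(\gamma^e\sigma)^k$ in a normal form via the relation $\gamma\sigma\gamma^{-1}=\sigma^u$, use the trivial intersection of $\langle\sigma\rangle$ with the automorphism part of $\operatorname{Hol}(\langle\sigma\rangle)$ to decouple the two factors, and then invoke Lemma \ref{valuation}(b) to pin down when the $\sigma$-exponent vanishes mod $p^n$. Your version is in fact slightly more carefully stated than the paper's (which compresses the final valuation step and contains a typo, asserting the valuation is $n$ ``if and only if $t=n$'' where $t=p^n$ is meant), but the underlying argument is identical.
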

\begin{proof}
We begin by observing that, since $\gamma\sigma\gamma^{-1}=\sigma^u$ then $\gamma^e\sigma\gamma^{-e}=\sigma^{u^e}$ which allows one to show that
$$
(\gamma^e\sigma)^t=\gamma\sigma^{1+u^e+\dots+u^{e(t-1)}}\gamma^{t-1}
$$
and since $\gamma\sigma^x\gamma^t=I$ if and only if $\sigma^x=\gamma^{-t}$ then $(\gamma^e\sigma)^t$ is the identity if and only if $\sigma^{1+u^e+\dots+u^{e(t-1)}}$ is the identity. However, by \ref{valuation} the $p$-valuation of the exponent is $n$ if and only if $t=n$.
\end{proof}
\begin{proposition}
\label{SRpn}
If $p$ is prime and $G=C_{p^n}=\langle\sigma\rangle$ then $\mathcal{S}(G)\cap\mathcal{R}(G)=\{\langle\gamma^e\sigma\rangle\ |\ e\in\mathbb{Z}_{p^m}\}$.
\end{proposition}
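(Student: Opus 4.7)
The proof plan has three main steps: exhibit the $p^m$ groups $\langle \gamma^e \sigma \rangle$, for $e \in \mathbb{Z}_{p^m}$, as elements of $\mathcal{S}(G) \cap \mathcal{R}(G)$; verify these subgroups are pairwise distinct; and then match the known cardinality of $\mathcal{S}(G) \cap \mathcal{R}(G)$ to close off the equality.

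First, that each $\langle \gamma^e \sigma \rangle$ lies in $\mathcal{S}(G)$ is immediate: it is a subgroup of $\operatorname{Hol}(G)$ since both $\gamma$ and $\sigma$ belong, and the preceding lemma gives $|\gamma^e \sigma| = p^n$. Any element of $S_{p^n}$ of order $p^n$ is automatically a single $p^n$-cycle (its cycle lengths are $p$-powers summing to $p^n$ with LCM $p^n$, forcing a single cycle of maximal length), so the cyclic group of order $p^n$ acts transitively, hence regularly. For $\langle \gamma^e \sigma \rangle \in \mathcal{R}(G)$, one must check that $\langle \sigma \rangle$ normalizes it. Using $\gamma \sigma \gamma^{-1} = \sigma^u$ from \ref{uaut}, rewritten as $\sigma \gamma = \gamma \sigma^{u^{-1}}$, we compute
\[
\sigma (\gamma^e \sigma) \sigma^{-1} = \gamma^e \sigma^{u^{-e}} = (\gamma^e \sigma) \cdot \sigma^{u^{-e} - 1}.
\]
By \ref{valuation}(a), $v_p(u^{-e} - 1) \geq m$, so $\sigma^{u^{-e}-1} \in \langle \sigma^{p^m} \rangle$. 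Using $|\gamma| = p^m$ and the power formula
\[
(\gamma^e \sigma)^k = \gamma^{ke} \sigma^{1 + u^{-e} + \cdots + u^{-(k-1)e}},
\]
setting $k = p^m$ gives $(\gamma^e \sigma)^{p^m} = \sigma^c$ with $v_p(c) = m$ by \ref{valuation}(b). Hence $\langle \sigma^{p^m} \rangle \subseteq \langle \gamma^e \sigma \rangle$, so the conjugate lies in $\langle \gamma^e \sigma \rangle$, completing the verification that $\langle \gamma^e \sigma \rangle \in \mathcal{R}(G)$.

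For distinctness, suppose $\langle \gamma^e \sigma \rangle = \langle \gamma^{e'} \sigma \rangle$, so $\gamma^{e'} \sigma = (\gamma^e \sigma)^k$ for some $k$. Matching $\gamma$- and $\sigma$-components in the power formula forces $k e \equiv e' \pmod{p^m}$ and $1 + u^{-e} + \cdots + u^{-(k-1)e} \equiv 1 \pmod{p^n}$. Factoring out the unit $u^{-e}$ from the latter and applying \ref{valuation}(b) forces $v_p(k - 1) \geq n$, whence $k = 1$ and $e' = e$. This produces $p^m$ distinct subgroups inside $\mathcal{S}(G) \cap \mathcal{R}(G)$.

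To close the argument, the total count satisfies $|\mathcal{S}(G) \cap \mathcal{R}(G)| = p^m$: for $p = 2$ this is immediate from \ref{2Q} combined with \ref{cyc-mutual}, and for $p$ odd it follows from \ref{gen-cyc-SR} after accounting for the redundancy of the parameter $i$ in enumerating distinct cyclic subgroups among the listed generators. Matching cardinalities with the $p^m$ subgroups already exhibited yields the equality. The main obstacle lies in the normalization step: it requires both valuation estimates in \ref{valuation} to line up simultaneously, namely the bound $v_p(u^{-e} - 1) \geq m$ must match the $p$-valuation of the $\sigma$-component of $(\gamma^e \sigma)^{p^m}$ exactly, so that $\sigma^{p^m}$ is actually realized inside $\langle \gamma^e \sigma \rangle$ rather than merely in the ambient holomorph.
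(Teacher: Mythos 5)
Your proposal is correct and follows essentially the same route as the paper: both establish membership in $\mathcal{S}(G)$ via $\gamma^e\in\operatorname{Aut}(G)$, verify membership in $\mathcal{R}(G)$ by computing $\sigma(\gamma^e\sigma)\sigma^{-1}=\gamma^e\sigma^{u^{-e}}$ from \ref{uaut}, and prove distinctness by matching the $\gamma$- and $\sigma$-components of $(\gamma^e\sigma)^k$ and invoking \ref{valuation}(b). The only cosmetic differences are that the paper identifies the conjugate directly as the power $(\gamma^e\sigma)^{v^e}$ where you instead factor it as $(\gamma^e\sigma)\cdot\sigma^{u^{-e}-1}$ and absorb the tail into $\langle\sigma^{p^m}\rangle\leq\langle\gamma^e\sigma\rangle$, and that you make explicit the regularity check and the cardinality comparison with \ref{gen-cyc-SR} and \ref{2Q} needed for the reverse inclusion, which the paper's proof leaves implicit.
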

\begin{proof}
As seen above, $\gamma^e\in \operatorname{Aut}(G)$ and so $\gamma^e\sigma\in \operatorname{Hol}(G)$ and therefore $N_e\in\mathcal{S}(G)$. To show $N_e\in\mathcal{R}(G)$ we recall that $\gamma\sigma\gamma^{-1}=\sigma^u$ so that $\gamma^{-1}\sigma\gamma=\sigma^v$ where $uv=1$ (i.e. $v=1-p^{n-m}$) and so $\gamma^{-e}\sigma\gamma^{e}=\sigma^{v^e}$ which means
\begin{align*}
\sigma(\gamma^e\sigma)\sigma^{-1} &= \sigma\gamma^e\\
                                  &= \gamma^e\sigma^{v^e}\\
                                  &= (\gamma^e\sigma)^{v^e}
\end{align*}
and so we see that $N_e$ normalizes $G$. The other verification to make is that each $e\in\mathbb{Z}_{p^m}$ determines a distinct group. To this end, suppose that $\gamma^e\sigma=(\gamma^f\sigma)^v$ for $v\in U_{p^n}$. Since 
\begin{align*}
\gamma^e\sigma\sigma(\gamma^e\sigma)^{-1}&=\sigma^{u^e}\\
(\gamma^f\sigma)\sigma((\gamma^f\sigma)^v)^{-1}&=\sigma^{u^{fv}}\\
\end{align*}
then we must have $e\equiv fv(mod\ p^m)$. Now, we have that 
$$(\gamma^f\sigma)^v=\gamma\sigma^{1+u^f+\dots+u^{(v-1)f}}\gamma^{v-1}$$
so if $\gamma^{fv}\sigma=(\gamma^f\sigma)^v$ then we must have 
\begin{align*}
\gamma^{fv-1}\sigma &= \gamma\sigma^{1+u^f+\dots+u^{(v-1)f}}\gamma^{v-1}\\
                   &\downarrow\\
\sigma^{u^{fv-1}}\gamma^{fv-1}&=\sigma^{1+u^f+\dots+u^{(v-1)f}}\gamma^{v-1}\\
                   &\downarrow\\
\sigma^{u^{fv-1}}\gamma^{(f-1)v}&=\sigma^{1+u^f+\dots+u^{(v-1)f}}\\
                          &\downarrow\\
\gamma^{(f-1)v} &= \sigma^{1+u^f+\dots+u^{(v-1)f}-u^{fv-1}}\\                  
\end{align*}
where the last line implies that $f=1$. As such $1+u+\dots+u^{(v-2)}=0$. But by \ref{valuation} this implies that $v_p(v-1)=n$ which means $v-1=ap^n$ namely that $v=1$. 
\end{proof}
We now consider the structure of $\QHG$ where (unlike the facts established above) there is a difference between the parameter sets when $p=2$ versus $p>2$.
\begin{theorem} For $G=\langle \sigma\rangle$ for $p\geq 3$ and $n\geq 2$, or $p=2$ and $n=2m+1$ for $m\geq 1$, then $\mathcal{Q}(G)$ is parameterized by powers of $\beta$ defined above.
\end{theorem}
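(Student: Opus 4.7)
The plan is to establish the single identity $\beta^e \sigma \beta^{-e} = \gamma^e \sigma$ for each $e \in \mathbb{Z}_{p^m}$, from which everything else follows. By Propositions \ref{cyc-mutual} and \ref{SRpn} when $p$ is odd, and Proposition \ref{2Q} together with \ref{SRpn} when $p=2$, we already know that $\mathcal{Q}(G) = \{\langle \gamma^e \sigma\rangle : e \in \mathbb{Z}_{p^m}\}$ with $|\mathcal{Q}(G)| = p^m$. Once the identity is in hand, each $\beta^e$ conjugates $\lG$ to a distinct member of $\mathcal{Q}(G)$, the $p^m$ cosets $\beta^e \HG$ are therefore automatically distinct (otherwise two conjugates would coincide), and $\{\beta^e : 0 \leq e < p^m\}$ is a complete parameterization of $\mathcal{Q}(G)$.

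The identity is a direct computation. I would identify $\{1,\dots,p^n\}$ with $\mathbb{Z}/p^n$ so that $\sigma$ becomes $x \mapsto x + 1$, and write each $x$ as $x = a + b\,p^{n-m}$ with $a \in \{0,\dots,p^{n-m}-1\}$. Because $\sigma_1, \dots, \sigma_{p^{n-m}}$ are pairwise disjoint $p^m$-cycles that commute, one reads off $\beta^e(x) = x + e\,t_a\,p^{n-m} \pmod{p^n}$, where $a = x \bmod p^{n-m}$; conjugation of $\sigma$ then gives
\[
\beta^e \sigma \beta^{-e}(x) \;=\; x + 1 + e\,(t_{a'} - t_a)\,p^{n-m}
\]
with $a' = (x+1)\bmod p^{n-m}$. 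For the interior case $a < p^{n-m}-1$ we have $t_{a'} - t_a = a+1$, while on the other side $\gamma^e \sigma(x) = u^e(x+1) \equiv (1 + e\,p^{n-m})(x+1) \pmod{p^n}$, since the higher-order terms of $u^e = (1+p^{n-m})^e$ vanish modulo $p^n$ (because $m = [n/2]$ forces $2(n-m) \geq n$). Both sides collapse to $x + 1 + e\,((x+1) \bmod p^{n-m})\,p^{n-m}$, so they agree.

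The main obstacle, and the source of the hypothesis on $p$ and $n$, is the boundary case $a = p^{n-m}-1$. Here $a'=0$, so $t_{a'} - t_a = -t_{p^{n-m}-1} = -p^{n-m}(p^{n-m}-1)/2$, contributing a term $-e\,p^{2(n-m)}(p^{n-m}-1)/2$ to $\beta^e \sigma \beta^{-e}(x)$. On the $\gamma^e \sigma$ side, $(x+1)$ is divisible by $p^{n-m}$, so $\gamma^e\sigma(x) \equiv x+1 \pmod{p^n}$; matching the two requires the contribution above to vanish modulo $p^n$. For $p$ odd, $(p^{n-m}-1)/2 \in \mathbb{Z}$ and $v_p(p^{2(n-m)}) = 2(n-m) \geq n$, so it vanishes automatically. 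For $p=2$ the halving absorbs one factor of $p$, and one needs $2(n-m) - 1 \geq n$, i.e.\ $n \geq 2m+1$, which is exactly the hypothesis $n = 2m+1$ (since $m = [n/2]$). When $p=2$ and $n=2m$ this correction is genuinely nonzero, which is why the theorem excludes that case.
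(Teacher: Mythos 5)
Your proposal is correct and follows essentially the same route as the paper: both prove the single identity $\beta^e\sigma\beta^{-e}=\gamma^e\sigma$ by a pointwise computation that splits into the interior case (where it reduces to $t_{i-1}+i=t_i$) and the wraparound case (where it reduces to $e\,t_{p^{n-m}-1}\equiv 0 \pmod{p^m}$, forcing exactly the hypothesis on $p$ and $n$), and then invokes the earlier enumeration of $\mathcal{Q}(C_{p^n})$ to conclude. Your use of additive coordinates on $\mathbb{Z}/p^n$ and the multiplicative form $\gamma^e(x)=u^e x$ is only a cosmetic repackaging of the paper's direct manipulation of the points $i+kp^{n-m}$.
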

\begin{proof} Recall that
$$
\beta=\displaystyle \prod_{i=1}^{p^{n-m}} \sigma_i^{t_{i-1}} \text{ and }\gamma=\displaystyle \prod_{i=1}^{p^{n-m}} \sigma_i^{i-1}
$$
for $\sigma^{p^{n-m}}=\sigma_1\cdots\sigma_{p^{n-m}}$ where the support of each $p^m$-cycle consists of elements with the same remainder mod $p^{n-m}$ and $\sigma_i(i+kp^{n-m})=i+(k+1)p^{n-m}$. To show the parameterization asserted, we shall show that
$$
\beta^e\sigma\beta^{-e}=\gamma^e\sigma
$$
for each $e\in\mathbb{Z}_{p^m}$. The delicate part of this calculation is paying attention to the fact that applying $\beta$ or $\gamma$ to an element of $x\in\{1,\dots,p^n\}$ requires determining the unique $i$ such that $x\in Supp(\sigma_i)$. We begin by computing 
\begin{align*}
\gamma^e\sigma\beta^e(i+kp^{n-m})&=\gamma^e\sigma\sigma_i^{et_{i-1}}(i+kp^{n-m})\\
                                 &=\gamma^e\sigma(i+(k+et_{i-1})p^{n-m})\\
                                 &=\gamma^e(i+1+(k+et_{i-1})p^{n-m})\\
                                 &=\begin{cases} \sigma_{i+1}^{ei}(i+1+(k+et_{i-1})p^{n-m})\ i<p^{n-m} \\
                                                 \sigma_{1}^{e\cdot 0}(1+(1+k+et_{p^{n-m}-1})p^{n-m})\ i=p^{n-m} \\
                                   \end{cases}\\
                                 &=\begin{cases} i+1+(k+et_{i-1}+ei)p^{n-m})\ i<p^{n-m}\\
                                                 1+(1+k+et_{p^{n-m}-1})p^{n-m})\ i=p^{n-m} \\
                                   \end{cases}\\
                                 &\text{ as compared with }\\
\beta^e\sigma(i+kp^{n-m}) &= \beta^e(i+1+kp^{n-m})\\
                          &= \begin{cases} \sigma_{i+1}^{et_i}(i+1+kp^{n-m})\ i<p^{n-m}\\
                                           \sigma_{1}^{et_0}(1+(k+1)p^{n-m})\ i=p^{n-m}\\
                             \end{cases}\\
                          &= \begin{cases} i+1+(k+et_i)p^{n-m}\ \ i<p^{n-m}\\
                                           1+(k+1)p^{n-m}\ \ \ \ \ \ \ \ \ \ i=p^{n-m}\\
                             \end{cases}\\
\end{align*}
so we must check whether
\begin{align*}
i+1+(k+et_{i-1}+ei)p^{n-m})&\equiv i+1+(k+et_i)p^{n-m}(mod\ p^n)\ \ i<p^{n-m}\\
1+(1+k+et_{p^{n-m}-1})p^{n-m})&\equiv  1+(k+1)p^{n-m}(mod\ p^n)\  i=p^{n-m}\\
\end{align*}
for $i\in\{1,\dots,p^{n-m}\}$ and $e,k\in\mathbb{Z}_{p^m}$. For $i<p^{n-m}$ 
\begin{align*}
i+1+(k+et_{i-1}+ei)p^{n-m})&\equiv i+1+(k+et_i)p^{n-m}(mod\ p^n)\\
                           &\updownarrow\\
(et_{i-1}+ei)p^{n-m})&\equiv (et_i)p^{n-m}(mod\ p^n)\\
                           &\updownarrow\\
(et_{i-1}+ei)&\equiv (et_i)(mod\ p^m)\\
\end{align*}
where the last congruence holds since $t_{i-1}+i=t_i$. The case for $i=p^{n-m}$ is where we see the effect of the prime $p$ and $m=[\frac{n}{2}]$. Specifically
\begin{align*}
1+(1+k+et_{p^{n-m}-1})p^{n-m})&\equiv  1+(k+1)p^{n-m}(mod\ p^n)\\
                           &\updownarrow\\
(et_{p^{n-m}-1})p^{n-m})&\equiv  0(mod\ p^n)\\
     &\updownarrow\\
et_{p^{n-m}-1}&\equiv  0(mod\ p^m)\\
\end{align*}
where now, if $p>2$ then $t_{p^{n-m}-1}=\frac{(p^{n-m}-1)p^{n-m}}{2}=\frac{(p^{m}-1)p^{m}}{2}\equiv 0(mod\ p^m)$, regardless of $m$. If $p=2$ then 
\begin{align*}
t_{2^{n-m}-1}&=\frac{(2^{n-m}-1)2^{n-m}}{2}\\
             &=(2^{n-m}-1)2^{n-m-1}\\
             &=\begin{cases}
               (2^{m}-1)2^{m-1}\ \ \ n=2m\\
               (2^{m+1}-1)2^{m}\ \ \ n=2m+1
               \end{cases}
\end{align*}
and so $t_{2^{n-m}-1}\equiv 0(mod\ 2^{m})$ only if $n=2m+1$. We note that if $e$ is even, then $et_{p^{n-m}-1}\equiv 0(mod\ 2^m)$ regardless of $n$, so $\langle\beta^2\rangle$ parameterizes those $N_e\in\mathcal{Q}(C_{2^n})$ where $e$ is even. (The question is whether there exists an element of order $2^m$ which generates a $\pi(\mathcal{Q}(2^n))$.) 
As we established earlier, each group $\langle\gamma^e\sigma\rangle$ is distinct. We've just shown that for $p$ odd, and $n\geq 3$ or, $p=2$ and $n=2m+1$ for $m\geq 1$ that $\beta^e$ conjugates $\sigma$ to $\gamma^e\sigma$ which means $\langle\beta\rangle$ is a $\pi(\mathcal{Q}(G))$. 
\end{proof}
We have therefore shown the following about $\QHG$.
\begin{corollary}
\label{Q2}
For $p=2$ and $n=2m+1$, for $m\geq 1$, or $p>2$ prime (for any $n$) if $G=C_{p^n}$, the group $\QHG$ is a Zappa-Sz\'ep extension of $\operatorname{Hol}(G)$, namely $\pi(\mathcal{Q}(G))\operatorname{Hol}(G)$ where $\pi(\mathcal{Q}(G))=\langle\beta\rangle$.
\end{corollary}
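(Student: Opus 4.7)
The proof is essentially a bookkeeping exercise on top of the preceding theorem, which is really the substantive content. That theorem exhibits $\langle\beta\rangle$ as the image of the parameter map, in the sense that $\beta^{e}\sigma\beta^{-e}=\gamma^{e}\sigma$ for every $e\in\mathbb{Z}_{p^m}$; combined with \ref{SRpn} (and \ref{cyc-mutual} or \ref{2Q}, depending on the parity of $p$), this identifies $\mathcal{Q}(G)=\{\langle\gamma^{e}\sigma\rangle\ |\ e\in\mathbb{Z}_{p^m}\}$ and shows that $\{\beta^{e}\ |\ e=0,1,\dots,p^{m}-1\}$ serves as a $\pi(\mathcal{Q}(G))$. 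The corollary then reduces to verifying that this parameter set is genuinely the underlying set of a cyclic subgroup and intersects $\HG$ trivially.

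The first step is to compute $|\beta|$. Since $\beta=\prod_{i=1}^{p^{n-m}}\sigma_{i}^{t_{i-1}}$ is a product of pairwise disjoint $p^{m}$-cycles, its order divides $p^{m}$. On the other hand, for the index $i=2$ one has $t_{1}=1$, so the factor $\sigma_{2}^{t_{1}}=\sigma_{2}$ appears as an untwisted $p^{m}$-cycle in the disjoint cycle decomposition of $\beta$. This forces $|\beta|=p^{m}$, hence $|\langle\beta\rangle|=p^{m}$.

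The second step is to observe that the map $e\mapsto \beta^{e}\HG$ from $\mathbb{Z}_{p^m}$ into the set of cosets of $\HG$ in $\QHG$ is injective. Indeed, by the preceding theorem the coset $\beta^{e}\HG$ is the unique coset conjugating $\lG$ to $\langle\gamma^{e}\sigma\rangle$, and by \ref{SRpn} the groups $\langle\gamma^{e}\sigma\rangle$ are distinct for distinct $e\in\mathbb{Z}_{p^{m}}$. Comparing cardinalities, $\langle\beta\rangle$ hits $p^{m}$ distinct cosets of $\HG$ and has itself order $p^{m}$, so $\langle\beta\rangle\cap\HG=\{id_{B}\}$.

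Finally, the preceding theorem also shows that $\pi(\mathcal{Q}(G))=\langle\beta\rangle$ is conj-closed, so by \ref{ScapRloop} the union $\QHG=\bigcup_{e=0}^{p^{m}-1}\beta^{e}\HG$ is a group, which we may now write as $\langle\beta\rangle\HG$ with $\langle\beta\rangle$ and $\HG$ subgroups having trivial intersection. This is precisely the internal product presentation of $\QHG$ as a Zappa-Sz\'ep extension of $\HG$ by $\langle\beta\rangle$. The only nontrivial piece in the whole argument is the order computation for $\beta$; everything else is an immediate unpacking of the previous theorem and \ref{ScapRloop}.
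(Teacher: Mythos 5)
Your proposal is correct and follows essentially the same route as the paper, which presents this corollary as an immediate consequence of the preceding theorem ($\beta^e\sigma\beta^{-e}=\gamma^e\sigma$ for all $e\in\mathbb{Z}_{p^m}$) without a separate argument. Your additional bookkeeping --- that $|\beta|=p^m$ via the untwisted factor $\sigma_2^{t_1}=\sigma_2$, that the cosets $\beta^e\HG$ are distinct because the groups $\langle\gamma^e\sigma\rangle$ are distinct by \ref{SRpn}, hence $\langle\beta\rangle\cap\HG=\{id_B\}$, and that a parameter set which is a subgroup is automatically conj-closed so \ref{ScapRloop} applies --- is exactly the right way to fill in the details the paper leaves implicit.
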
 
The $n=2m$ case for $p=2$ in the above proof is certainly rather curious in that $\mathcal{Q}(G)=\{N_e\ |\ e\in\mathbb{Z}_{2^m}\}$ but that for the '$\beta$' above, we find that $\beta G\beta^{-1}\not\in\mathcal{Q}(G)$ but that $Orb_{\langle\beta^2\rangle}(G)=\{N_e\ |\ e\in 2\mathbb{Z}_{2^m}\}$. The question then is what elements parameterize $N_e$ for $e\in 1+2\mathbb{Z}_{2^m}$? And can we infer the existence of a parameter set for $\pi(\mathcal{Q}(G))$ which is a group? We can give a partial answer to these questions by means of the following modest observation.
\begin{proposition} 
$\beta^{2k}\gamma\sigma\beta^{-2k}=\gamma^{2k+1}\sigma$
\end{proposition}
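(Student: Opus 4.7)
The plan is to reduce this to a short manipulation built on two observations: first, that $\beta$ and $\gamma$ commute, and second, that the even exponent case of the previous theorem still gives us $\beta^{2k}\sigma\beta^{-2k}=\gamma^{2k}\sigma$ regardless of the parity of $n$.

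For the first observation, I note that both $\beta$ and $\gamma$ are written as products of powers of the disjoint $p^m$-cycles $\sigma_1,\dots,\sigma_{p^{n-m}}$, so they both lie in the abelian subgroup $\langle\sigma_1\rangle\times\cdots\times\langle\sigma_{p^{n-m}}\rangle$ of $B$. In particular $\beta^{2k}$ commutes with $\gamma$, so $\beta^{2k}\gamma\beta^{-2k}=\gamma$.

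For the second observation, I go back to the congruence analysis at the end of the proof of the preceding theorem. The only place where the parity of $n$ mattered was in showing $et_{p^{n-m}-1}\equiv 0\pmod{p^m}$ at the wrap-around index $i=p^{n-m}$, and this is automatic whenever $e$ is even. Thus for every integer $k$, the identity $\beta^{2k}\sigma\beta^{-2k}=\gamma^{2k}\sigma$ holds for all primes $p$ and all $n\geq 2$, including the $p=2$, $n=2m$ case.

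Putting these two facts together, I compute
\begin{align*}
\beta^{2k}\gamma\sigma\beta^{-2k}
&=(\beta^{2k}\gamma\beta^{-2k})(\beta^{2k}\sigma\beta^{-2k})\\
&=\gamma\cdot \gamma^{2k}\sigma\\
&=\gamma^{2k+1}\sigma,
\end{align*}
which is the desired equality. There is no real obstacle here; the substantive content was already extracted in the preceding theorem, and this proposition merely records the consequence that for the delicate $p=2$, $n=2m$ case the odd-indexed groups $N_{2k+1}\in\mathcal{Q}(G)$ are obtained as conjugates of $N_1=\langle\gamma\sigma\rangle$ by even powers of $\beta$, suggesting that an enlarged parameter set combining $\langle\beta^2\rangle$ with $\gamma$ (rather than $\langle\beta\rangle$ alone) is the correct candidate for $\pi(\mathcal{Q}(G))$.
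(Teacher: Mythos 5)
Your proof is correct and follows essentially the same route as the paper: reduce to $\beta^{2k}\gamma\beta^{-2k}=\gamma$ via the known identity $\beta^{2k}\sigma\beta^{-2k}=\gamma^{2k}\sigma$ for even exponents, and observe that $\beta$ and $\gamma$ commute because both are products of powers of the same disjoint cycles $\sigma_i$. Your write-up is in fact slightly more explicit than the paper's, which simply asserts the commutation ``by virtue of how $\beta$ and $\gamma$ are defined.''
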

\begin{proof}
We already know that $\beta^{2k}\sigma\beta^{-2k}=\gamma^{2k}\sigma$ so $\beta^{2k}\gamma\sigma\beta^{-2k}=\gamma^{2k+1}\sigma$ if and only if $\beta^{2k}\gamma\beta^{-2k}\gamma^{2k}\sigma=\gamma^{2k+1}\sigma$. But this is equivalent to $\beta^{2k}\gamma\beta^{-2k}=\gamma$, but this is true by virtue of how $\beta$ and $\gamma$ are defined previously.
\end{proof}
This results in a kind of parameterization of those $N_e\in\mathcal{Q}(G)$ for $e$ odd.
\begin{corollary}
For $N_1=\langle\gamma\sigma\rangle$ one has that $Orb_{\langle\beta^2\rangle}(N_1)=\{N_e\ |\ e\in 1+2\mathbb{Z}_{2m}\}$
\end{corollary}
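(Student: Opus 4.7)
The plan is to deduce the corollary directly from the preceding proposition. That result supplies, for every integer $k$, the identity $\beta^{2k}(\gamma\sigma)\beta^{-2k}=\gamma^{2k+1}\sigma$. Since conjugation is a group homomorphism on the ambient permutation group, it sends a cyclic group to the cyclic group generated by the conjugated generator, so this upgrades to
$$
\beta^{2k}N_1\beta^{-2k}=\langle\gamma^{2k+1}\sigma\rangle=N_{2k+1}
$$
for every $k\in\mathbb{Z}$. This already exhibits every element of the orbit of $N_1$ under $\langle\beta^2\rangle$ as an $N_e$ with $e$ an odd residue modulo $2^m$.

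Next I would argue that, as $k$ varies, $2k+1$ traverses \emph{exactly} the odd residues mod $2^m$, and that no two distinct odd residues give the same group. The latter point is precisely the injectivity of the assignment $e\mapsto N_e$ on $\mathbb{Z}_{2^m}$, which is part of the statement of \ref{SRpn}. Consequently, $N_{2k+1}=N_{2k'+1}$ if and only if $2k+1\equiv 2k'+1\pmod{2^m}$, equivalently $k\equiv k'\pmod{2^{m-1}}$. As $k$ runs through a complete set of residues mod $2^{m-1}$, the parameter $2k+1$ runs through every element of $1+2\mathbb{Z}_{2^m}$, giving the orbit description
$$
\mathrm{Orb}_{\langle\beta^2\rangle}(N_1)=\{N_e : e\in 1+2\mathbb{Z}_{2^m}\}.
$$

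There is essentially no obstacle here: the combinatorial content was already extracted in the preceding proposition, and the corollary is a matter of bookkeeping together with the injectivity noted above. The only sanity check worth making explicit is that $\mathrm{Orb}_{\langle\beta^2\rangle}(N_1)$ is computed over all integer powers of $\beta^2$, but since the map $k\mapsto N_{2k+1}$ factors through $\mathbb{Z}/2^{m-1}\mathbb{Z}$, this causes no overcounting.
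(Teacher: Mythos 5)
Your proof is correct and matches the paper's (implicit) argument: the paper states this corollary without proof as an immediate consequence of the preceding proposition, and your write-up simply supplies the routine bookkeeping — conjugation carries $\langle\gamma\sigma\rangle$ to $\langle\gamma^{2k+1}\sigma\rangle=N_{2k+1}$, and the injectivity of $e\mapsto N_e$ from \ref{SRpn} identifies the orbit with the odd residues mod $2^m$.
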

So for $p=2$ and $n=2m$ we have subgroups of $\QHG$ which parameterize the two 'halves' of $\mathcal{Q}(G)$. One way to infer the existence of a subgroup $\pi(\mathcal{Q}(G))$ would be to find an element $\alpha\in\QHG$ such that $\alpha^2=\beta^2$ such that $\alpha\sigma\alpha^{-1}=\gamma\sigma$. For $n$ odd, this is exactly $\beta$ itself, of course, but it hints at the fact that, for $n$ even, such an $\alpha$ would be a 'square root' of $\beta^2$. As such, for even $n$, given the cycle structure of $\beta^2$ it would become a computational question to find such an $\alpha$. From calculations done in GAP, which we shall explore more fully later, $\QHG$ does appear to be a Zappa-Sz\'ep extension for these cyclic $2$-groups, and that there exists $\pi(\mathcal{Q}(G))$ which are cyclic.\par
\section{Cyclic groups in general}
For general cyclic groups $C_n$ where  $n=p_1^{e_1}p_2^{e_2}\dots p_r^{e_r}$ for distinct primes $p_i$ then $\operatorname{Hol}(C_n)\cong \operatorname{Hol}(C_{p_1^{e_1}})\times\cdots\times \operatorname{Hol}(C_{p_r^{e_r}})$. The question is what is the relationship between $\mathcal{Q}(C_n)$ and $\mathcal{Q}(C_{p_i^{e_i}})$, and for the purpose of enumeration, between $|\mathcal{Q}(C_n)|$ and $|\mathcal{Q}(C_{{p_i}^e_i})|$.  We note that Miller in \cite{Miller1908} observed that $N\!\operatorname{Hol}(C_n)$ and concordantly $T(C_n)$ also splits along the relatively prime components. As such $|\mathcal{H}(G)|=|\mathcal{H}(C_{p_1^{e_1}})|\cdots|\mathcal{H}(C_{p_r^{e_r}})|$ where, of course, $|\mathcal{H}(C_{p_i^{e_i}})|=1$ if $p>2$.\par
We can show a considerably stronger result than just a statement about relatively prime order cyclic groups. 
\begin{proposition}
\label{QC}
If $|G_1|=n_1$ and $|G_2|=n_2$ where $gcd(n_1,n_2)=1$ then $|\mathcal{Q}(G_1\times G_2)|=|\mathcal{Q}(G_1)|\cdot|\mathcal{Q}(G_2)|$.
\end{proposition}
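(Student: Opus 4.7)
The plan is to construct an explicit bijection
$$
\Psi: \mathcal{Q}(G_1) \times \mathcal{Q}(G_2) \longrightarrow \mathcal{Q}(G_1 \times G_2),\qquad (N_1, N_2) \mapsto N_1 \times N_2,
$$
where $N_1 \times N_2$ is viewed as a subgroup of $B = \operatorname{Perm}(G_1 \times G_2)$ acting coordinatewise. Since $\gcd(n_1, n_2)=1$, each factor $G_i$ is characteristic in $G_1 \times G_2$, so $\operatorname{Aut}(G_1 \times G_2) = \operatorname{Aut}(G_1) \times \operatorname{Aut}(G_2)$ and consequently
$$
\operatorname{Hol}(G_1 \times G_2) = \operatorname{Hol}(G_1) \times \operatorname{Hol}(G_2)
$$
as subgroups of $B$, with $\lambda(G_1 \times G_2) = \lambda(G_1) \times \lambda(G_2)$ under this identification.

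The technical core is a structure theorem: every $N \in \mathcal{S}(G_1 \times G_2) \cap \mathcal{R}(G_1 \times G_2)$ decomposes as $N = N_1 \times N_2$ with $N_i \in \mathcal{S}(G_i) \cap \mathcal{R}(G_i)$. Since $N \cong G_1 \times G_2$ has characteristic Hall subgroups $N_1, N_2$ of orders $n_1, n_2$, one obtains $N = N_1 \times N_2$ as an internal direct product. The delicate point is showing $N_1 \leq \operatorname{Hol}(G_1) \times \{1\}$ (and symmetrically for $N_2$). For this I would study the $N_1$-orbits on $G_1 \times G_2$: being normal in the regular group $N$, $N_1$ acts semi-regularly with $n_2$ orbits of size $n_1$, and these are permuted transitively by $\lambda(G_1 \times G_2) \leq \operatorname{Norm}_B(N)$. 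The stabilizer of the orbit containing $(1,1)$ is a subgroup of order $n_1$ in $\lambda(G_1 \times G_2) \cong G_1 \times G_2$, and by uniqueness of the Hall subgroup of that order it must be $\lambda(G_1) \times \{1\}$. Since $\lambda(G_1) \times \{1\}$ is already transitive on $G_1 \times \{1\}$, the orbit equals $G_1 \times \{1\}$, and translation by $\{1\} \times \lambda(G_2)$ identifies the remaining $N_1$-orbits as the sets $G_1 \times \{g_2\}$. Hence $(h_1, h_2) \in N_1$ forces $h_2(g_2) = g_2$ for every $g_2$, so the projection $p_2(N_1)$ in $\operatorname{Hol}(G_2)$ is trivial, giving $N_1 \leq \operatorname{Hol}(G_1) \times \{1\}$.

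Once the decomposition is in hand, $N_i \in \mathcal{S}(G_i) \cap \mathcal{R}(G_i)$ follows immediately: $N_1$ is regular on $G_1 \cong G_1 \times \{1\}$ (an orbit of $N_1$ on which it is semi-regular), and $\lambda(G_1)$ normalizes $N_1$ because $\lambda(G_1) \times \{1\}$ normalizes the characteristic subgroup $N_1 \trianglelefteq N$. Conversely, $\Psi$ is well-defined because the product of two regular actions is regular, $N_1 \times N_2 \cong G_1 \times G_2$, $N_1 \times N_2 \leq \operatorname{Hol}(G_1) \times \operatorname{Hol}(G_2) = \operatorname{Hol}(G_1 \times G_2)$, and $\lambda(G_1 \times G_2) = \lambda(G_1) \times \lambda(G_2)$ normalizes it componentwise. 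Injectivity of $\Psi$ is clear, and surjectivity onto $\mathcal{S}(G_1 \times G_2) \cap \mathcal{R}(G_1 \times G_2)$ is precisely the structure theorem.

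To restrict $\Psi$ to the $\mathcal{Q}$-subsets, observe that for any $N = N_1 \times N_2$ and $M = M_1 \times M_2$ decomposing as above, mutual normalization of $N$ and $M$ in $B$ is equivalent to mutual normalization of $N_i$ and $M_i$ in $B_i = \operatorname{Perm}(G_i)$ for each $i$. Combined with the structure theorem, this gives $N_1 \times N_2 \in \mathcal{Q}(G_1 \times G_2)$ iff $N_i \in \mathcal{Q}(G_i)$ for $i = 1, 2$: the forward direction is obtained by testing against $M_1 \times \lambda(G_2)$ (and $\lambda(G_1) \times M_2$) as $M_i$ ranges over $\mathcal{S}(G_i) \cap \mathcal{R}(G_i)$, and the reverse is the componentwise characterization applied to the decomposition of an arbitrary $M \in \mathcal{S}(G_1 \times G_2) \cap \mathcal{R}(G_1 \times G_2)$. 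The hard part is the structure theorem, and within it specifically the step ruling out nontrivial $p_2(N_1)$; the block-and-orbit argument driven by the characteristic Hall structure is what forces the coprime-order hypothesis to do its work.
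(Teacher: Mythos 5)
Your proof is correct, and its overall architecture coincides with the paper's: decompose each $N\in\mathcal{S}(G_1\times G_2)\cap\mathcal{R}(G_1\times G_2)$ into its two characteristic Hall subgroups $N_1,N_2$, show each lives entirely on one coordinate, and check that the resulting bijection $(N_1,N_2)\mapsto N_1\times N_2$ respects the mutual-normalization condition defining $\mathcal{Q}$. Where you differ is in the mechanism for the key step, and your version is essentially the mirror image of the paper's. The paper states that it will \emph{not} invoke the product structure of $\operatorname{Hol}(G_1\times G_2)$; instead it uses the hypothesis that $N$ normalizes $\lambda(G)$ (so elements of $N_i$ carry orbits of the characteristic subgroup $G_i\leq\lambda(G)$ to themselves) together with a coprime-order commutator argument (the element $g_2'g_2^{-1}=\eta_1^{-1}\eta_1'$ lies in both $G_2$ and $N_1$, hence is trivial) to pin down the cycle supports of the $N_i$ and conclude that the two coordinate actions are independent. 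You instead use the complementary hypothesis that $\lambda(G)$ normalizes $N$, so that the $N_1$-orbits form a block system permuted transitively by $\lambda(G_1\times G_2)$, identify the block stabilizer with the unique subgroup of order $n_1$ of $G_1\times G_2$, and then read off triviality of the second projection of $N_1$ directly from the coordinatewise identification $\operatorname{Hol}(G_1\times G_2)=\operatorname{Hol}(G_1)\times\operatorname{Hol}(G_2)$. Both arguments draw on exactly the same two hypotheses ($N\in\mathcal{S}\cap\mathcal{R}$ and coprimality of $n_1,n_2$); yours buys a shorter and, in places, more rigorously stated verification that $N_1\leq\operatorname{Hol}(G_1)\times\{1\}$, while the paper's support-and-block phrasing is the form it reuses in the subsequent dihedral analysis. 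Your reduction of the $\mathcal{Q}$-condition to the componentwise one by testing against $M_1\times\lambda(G_2)$ and $\lambda(G_1)\times M_2$ is also a clean way to finish and matches what the paper does implicitly.
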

\begin{proof}
If $G=G_1\times G_2$ then $\lG\leq B=Perm(G)\cong S_{n_1n_2}$. Moreover, $\operatorname{Aut}(G)\cong \operatorname{Aut}(G_1)\times \operatorname{Aut}(G_2)$ and therefore $\operatorname{Hol}(G)\cong \operatorname{Hol}(G_1)\times \operatorname{Hol}(G_2)$. However, our argument will not explicitly require a consideration of the structure of $\operatorname{Hol}(G)$, but rather is focused on the mutual normalization property. By a slight abuse of notation, we may represent $\lG$ as an internal direct product $G_1G_2$ where $G_1$ and $G_2$ are semi-regular subgroups. \par
If now $N\in\mathcal{Q}(G)$ then similarly we may write $N=N_1N_2$ where $N_i\cong G_i$. Since $N\in\mathcal{Q}(G)$ then $N_1N_2$ normalizes $G_1G_2$ and is normalized by $G_1G_2$. What we can show is a relationship between the supports of the cycles that make up the elements of $G_i$ and those of $N_i$.\par
For $x\in G$ let $A_i=Orb_{G_i}(x)$ and $B_i=Orb_{N_i}(x)$ for $i=1,2$. Since each $G_i$ is characteristic in $G$ then for $\eta\in N_i$ we have $\eta G_i\eta^{-1}=G_i$. Thus, since $Orb_{\eta G_i \eta^{-1}}(x)=\eta(A_i)$ and $\eta G_i\eta^{-1}=G_i$, we have that $\eta(A_i)=A_i$ and since $x\in A_i$ then $\eta(x)\in A_i$ for {\it all} $\eta\in N_i$. But this means $B_i=Orb_{N_i}(x)\subseteq A_i$ and since $|A_i|=|G_i|=|N_i|=|B_i|$ we conclude that $A_i=B_i$. Furthermore, if $g=g_1g_2$ where $g_i=g_{i1}g_{i2}\cdots g_{i(n_1n_2/n_i)}\in G_i$ and $\eta=\eta_1\eta_2$ where $\eta_i=\eta_{i1}\eta_{i2}\cdots\eta_{i(n_1n_2/n_i)}\in N_i$ then we may arrange the $g_{ij}$ and $\eta_{ij}$ so that $Supp(g_{ij})=Supp(\eta_{ij})$ and $Supp(g_{ij})\cap Supp(\eta_{ik})=\emptyset$ for $j\neq k$. 
If $g\eta_1\eta_2 g^{-1}=\eta'_1\eta'_2$ where $\eta'_i=\eta'_{i1}\eta'_{i2}\cdots\eta'_{i(n_1n_2/n_i)}$ we may assume therefore that $g_{ij}\eta_{ij} g_{ij}^{-1}=\eta'_{ij}$ for $j=1\dots n/n_i$ and for $j\neq k$ that $g_{ij}\eta_{ik} g_{ij}^{-1}=\eta'_{ik}$. Since, $G$, whence the $G_i$ are fixed at the outset, it follows that this holds true for all $N\in\mathcal{Q}(G)$.\par
\noindent Suppose also that $\eta_1g_2\eta_1^{-1}=g_2'\in G_2$ and $g_2\eta_1^{-1}g_2^{-1}=\eta'_1\in N_1$ then 
\begin{align*}
\eta_1g_2\eta_1^{-1}g_2^{-1}&=g_2'g_2^{-1}\in G_2\\
\eta_1 g_2\eta_1^{-1}g_2^{-1}&=\eta_1^{-1}\eta'_1\in N_1\\
\end{align*}
which implies that $g'_2g_2^{-1}=\eta_1^{-1}\eta'_1$. But since $gcd(|G_2|,|N_1|)=gcd(n_2,n_1)=1$ then we must have that $g'_2g_2^{-1}=\eta_1^{-1}\eta'_1=id$ which implies that $g_2$ centralizes $\eta_1$, and a symmetric argument implies that $g_1$ centralizes $\eta_2$. What all these facts imply is that $\lG$ establishes a set of supports $S_{11},S_{12},\dots,S_{1n_2}$ for the cycles that make up $g_1$ and similarly supports $S_{21},S_{22},\dots,S_{2n_1}$ for the cycle structure of $g_2$. Note also that each $S_{1j}$ intersects $S_{2k}$ in exactly one point for each $j\in\{1,\dots,n_1\}$ and $k\in\{1,\dots,n_2\}$. Moreover, these same supports are the same for the cycles that make up $\eta_1$ and $\eta_2$. As such if $g=g_1g_2$ then the action of $\lG$ can be viewed as the action of $g_1$ simultaneously on each $S_{1i}$ and $g_2$ on each $S_{2i}$ where the action on the $S_{2i}$ permutes the $\{S_{2i}\}$ amongst each other in blocks, and these two actions are independent of each other. As such, if $N_1$ is any regular subgroup of $Perm(G_1)$ that normalizes $G_1$ and is normalized by $G_1$, where both are viewed as diagonally embedded in $Perm(S_{11}\times\cdots\times S_{1n_1})$ and if similarly we have $N_2$ a regular subgroup of $Perm(G_2)$ that normalizes $G_2$ and is normalized by $G_2$ then both can be embedded diagonally in $Perm(S_{21}\times\cdots\times S_{2n_2})$. As such $N_1\cdot N_2$ can be embedded as $N\leq Perm(G)$ where 
$$G=S_{11}\cup\dots\cup S_{1n_2}=S_{21}\cup\dots\cup S_{2n_1}$$
and is normalized by and normalizes $\lG=G_1G_2$. More generally, this holds for {\it any} pair of regular subgroups which mutually normalize each other, in particular the members of $\mathcal{Q}(G)$. Therefore the correspondence 
$$\mathcal{Q}(G_1)\times \mathcal{Q}(G_2)\ni(N_1,N_2)\mapsto N\in\mathcal{Q}(G_1\times G_2)$$
is bijective and establishes the count stated in the proposition.
\end{proof}
As to $\QHG$ for $G=G_1\times G_2$ a product of relatively prime order groups, if $\operatorname{QHol}(G_1)$, and $\operatorname{QHol}(G_2)$ are groups the argument used to show the bijection above, in particular the independence of the actions on the blocks versus the supports of blocks implies that $\QHG\cong \operatorname{QHol}(G_1)\times \operatorname{QHol}(G_2)$, which parallels the decomposability of $\HG$, $\NHG$ and $T(G)$ into direct products.\par
\section{Dihedral Groups}
Our first class of examples beyond cyclic groups will be the dihedral groups $D_n$ of order $2n$ for $n\geq 3$. The case of $n=3$ is relatively simple in that, since $D_3=S_3$ which is a complete group,  $\operatorname{Hol}(D_3)=\lambda(D_3)\times \rho(D_3)$ which easily yields that $\mathcal{S}=\mathcal{R}=\{\lambda(D_3),\rho(D_3)\}$. As such $\mathcal{Q}(D_3)=\mathcal{H}(D_3)$, and that $\NHG=\QHG\cong (S_3\times S_3)\rtimes C_2$ where the $C_2$ component is that element conjugating $\lambda(D_3)$ to $\rho(D_3)$, so that $\pi(\mathcal{Q}(G))$ is also this same subgroup of order $2$. Indeed, if we embed $D_3$ into $S_6$ then we have
\begin{align*}
\lambda(D_3)&=\langle (1,3)(2,5)(4,6), (1,4,5)(2,6,3) \rangle\\
\rho(D_3)&=\langle (1,2)(3,5)(4,6), (1,4,5)(2,3,6) \rangle\\
Q\!\operatorname{Hol}(D_3)&=\operatorname{Hol}(S_3)\langle\tau\rangle\\
&\text{ for any $\tau\in\{(1,4),(1,5),(2,3),(2,6),(3,6),(4,5)\}$}\\
\end{align*}
so that all $\pi(\mathcal{Q}(D_3))$ are isomorphic, which is unsurprising given that $\mathcal{Q}=\mathcal{H}$ and $Q\!\operatorname{Hol}(D_3)=N\!\operatorname{Hol}(S_3)$ so that any $\pi(\mathcal{Q}(G))$ would be isomorphic to $T(D_3)$. %The point is that $T(G)$ is an invariant, but not all $\pi(\mathcal{Q}(G))$ need be isomorphic.\par
For $D_n=\langle x,t\ |\ x^n=1,t^2=1,xt=tx^{-1}\rangle$ in general, in \cite{Kohl-Dihedral-2020}, we have a complete enumeration of $\mathcal{R}(D_n)=R(D_n,[D_n])$.
\begin{theorem}\cite[4.4 4.7]{Kohl-Dihedral-2020}
\label{Kohl-Dihedral}
For the dihedral groups $D_n$ of order $2n$
$$|R(D_n,[D_n])| = \begin{cases}
                       (\frac{n}{2}+2)|\Upsilon_n| \ \ \text{ if }8|n\\
                       (\frac{n}{2}+1)|\Upsilon_n| \ \ \text{ if }4|n \text{ but }8\nmid n\\
                       (n+1)|\Upsilon_n| \ \ \text{ if }2|n\text{ but }4\nmid n\\
                       |\Upsilon_n| \ \ \ \ \ \ \ \ \ \ \ \text{ if }n\text{ odd }\\
\end{cases}$$
where $\Upsilon_n=\{u\in U_n\ |\ u^2=1\}$.
\end{theorem}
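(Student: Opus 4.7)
The plan is to apply the reflection principle (\ref{reflection}) with $G_i=G_j=\lambda(D_n)$. Since $|\operatorname{Aut}(D_n)|$ then appears identically on both sides of the reflection identity, one obtains $|R(D_n,[D_n])|=|S(D_n,[D_n])|$, reducing the problem to enumerating regular subgroups of $\operatorname{Hol}(D_n)$ that are abstractly isomorphic to $D_n$. This is far more tractable than working directly inside $B=\operatorname{Perm}(D_n)$, since $\operatorname{Hol}(D_n)$ has the concrete description $\rho(D_n)\operatorname{Aut}(D_n)$ with the automorphisms already parametrized in the excerpt as $\phi_{(i,j)}(t^ax^b)=t^ax^{ia+jb}$ for $j\in U_n$ and $i\in\mathbb{Z}_n$.

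To enumerate $S(D_n,[D_n])$, I would write a typical element of $\operatorname{Hol}(D_n)$ as $\rho(t^ax^b)\phi_{(i,j)}$ and note that a regular subgroup $N\cong D_n$ is determined by a pair of generators $\xi,\eta$ satisfying $|\xi|=n$, $|\eta|=2$, and $\eta\xi\eta=\xi^{-1}$, with both acting without fixed points on $D_n$. The approach is to solve these relations explicitly in the $\rho(D_n)\operatorname{Aut}(D_n)$ coordinates, then account for when distinct generator pairs produce the same subgroup. The order and fixed-point conditions force $\xi$ to have the shape $\rho(x)\phi_{(c,u)}$ for some $c\in\mathbb{Z}_n$ and an involutory $u\in\Upsilon_n$, which at the outset pulls out the uniform factor $|\Upsilon_n|$ visible in every clause of the theorem.

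Once $\xi$ is fixed, compatible involutions $\eta$ take the form $\rho(tx^{e})\phi_{(f,-1)}$, with the relation $\eta\xi\eta=\xi^{-1}$ descending to a congruence in $\mathbb{Z}_n$ whose solution count depends on how deeply $2$ divides $n$. For odd $n$ the congruence admits only the trivial twist, yielding exactly $|\Upsilon_n|$ subgroups; for $2\|n$ a full family of $c$ becomes admissible and one obtains $(n+1)|\Upsilon_n|$; when $4|n$ the constraint tightens and eliminates roughly half of these, giving $(n/2+1)|\Upsilon_n|$; and when $8|n$ two additional involutions that would have been obstructed for smaller $v_2(n)$ now survive, producing the final $(n/2+2)|\Upsilon_n|$.

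The principal obstacle will be executing this case analysis cleanly rather than by brute force: the distinction between $(n/2+1)$ and $(n/2+2)$ must be pinned on a specific divisibility condition that is solvable precisely when $8\mid n$, and one must simultaneously verify that distinct admissible quadruples $(c,e,f,u)$ produce genuinely distinct subgroups of $\operatorname{Hol}(D_n)$. This amounts to quotienting the parameter space by the conjugation action of $\xi$ and by the freedom to replace $\xi$ by another generator of $\langle\xi\rangle$, which interacts nontrivially with $\Upsilon_n$ and with the 2-part of $n$; keeping the bookkeeping transparent is the delicate point of the argument.
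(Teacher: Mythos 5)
A point of order first: the paper does not actually prove this statement. Theorem \ref{Kohl-Dihedral} is imported from \cite{Kohl-Dihedral-2020} (its results 4.4 and 4.7), and the method used there --- visible in this paper's later discussion around \ref{QX0} and \ref{QDn} --- is a direct analysis of $R(D_n,[D_n])$ inside $\operatorname{Perm}(D_n)$, stratified by which block system $\{X_i,Y_i\}$ (equivalently, which index-$2$ subgroup of $D_n$) carries the characteristic cyclic subgroup of $N$, with generators written as explicit products of $n$-cycles $k_Xk_Y$ subject to difference equations in the exponents. Your opening move is genuinely different and perfectly legitimate: the Corollary to \ref{reflection} does give $|R(D_n,[D_n])|=|S(D_n,[D_n])|$, so it would suffice to count regular copies of $D_n$ inside $\operatorname{Hol}(D_n)=\rho(D_n)\operatorname{Aut}(D_n)$, a smaller and more structured ambient group than $\operatorname{Perm}(D_n)$.

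Past that reduction, however, there is a genuine gap: everything that actually produces the four-way case split is asserted rather than derived, and the asserted normal forms are partly wrong. That the $j$-component of the automorphism part of $\xi$ lies in $\Upsilon_n$ does follow (project the relation $\eta\xi\eta^{-1}=\xi^{-1}$ to the abelian quotient $U_n$ of $\operatorname{Aut}(D_n)$), but you give no argument that the $\rho$-part of $\xi$ can be normalized to $\rho(x)$, and the claim that the compatible involutions must have the form $\rho(tx^{e})\phi_{(f,-1)}$ is too restrictive: already in $\mathcal{H}(D_n)$ the relevant involutions are $\rho(t)\phi_{(0,u)}$ with $u$ ranging over all of $\Upsilon_n$, not just $u=-1$. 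Likewise, the assertion that the uniform factor $|\Upsilon_n|$ is ``pulled out at the outset'' by the $j$-slot of $\xi$ conflicts with the known description of $\mathcal{H}(D_n)=\{\langle\rho(x)\phi_{(u+1,1)},\rho(t)\phi_{(0,u)}\rangle\}$, where that factor enters through the $i$-slot and through $\eta$. Finally, the four counts $|\Upsilon_n|$, $(n+1)|\Upsilon_n|$, $(\tfrac n2+1)|\Upsilon_n|$ and $(\tfrac n2+2)|\Upsilon_n|$ are simply announced as the outcome of congruences that are never written down, and the identification of generator quadruples yielding the same subgroup --- which you correctly flag as the delicate point --- is left entirely open. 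As it stands this is a plausible plan of attack, not a proof.
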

As such, we also, in principle, have an enumeration of $\mathcal{S}(D_n)\cap\mathcal{R}(D_n)$ by filtering those $N\in R(D_n,[D_n])$ which normalize $\lambda(D_n)$. In fact, we can determine $|\mathcal{Q}(D_n)|$ but we have to separate the analysis between $n$ being even vs. odd.\par
\noindent For $n$ odd, we can apply \cite[2.11]{Kohl-Multiple-2014} mentioned earlier, namely that $|\mathcal{H}(D_n)|=|\Upsilon_n|$. 
\begin{theorem}
For $n$ odd, $\mathcal{Q}(D_n)=\mathcal{H}(D_n)$.
\end{theorem}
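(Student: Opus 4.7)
The plan is to prove the equality by a direct cardinality comparison, squeezing $\mathcal{Q}(D_n)$ between $\mathcal{H}(D_n)$ and $\mathcal{S}(D_n)\cap\mathcal{R}(D_n)$ and showing these extremes already coincide in the odd case. We know from Lemma \ref{SRcontainsH} and Lemma \ref{HinQ} that the chain
\[
\mathcal{H}(D_n)\;\subseteq\;\mathcal{Q}(D_n)\;\subseteq\;\mathcal{S}(D_n)\cap\mathcal{R}(D_n)
\]
always holds, so it suffices to show $|\mathcal{H}(D_n)|=|\mathcal{S}(D_n)\cap\mathcal{R}(D_n)|$ when $n$ is odd.

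First I would invoke the quoted Theorem \cite[2.11]{Kohl-Multiple-2014}, which gives $|\mathcal{H}(D_n)|=|T(D_n)|=|\Upsilon_n|$ for all $n$, in particular for $n$ odd. Next, I would apply Theorem \ref{Kohl-Dihedral} in the odd case, which yields $|\mathcal{R}(D_n)|=|R(D_n,[D_n])|=|\Upsilon_n|$. Since $\mathcal{S}(D_n)\cap\mathcal{R}(D_n)\subseteq\mathcal{R}(D_n)$, we immediately get
\[
|\mathcal{H}(D_n)|\;\leq\;|\mathcal{Q}(D_n)|\;\leq\;|\mathcal{S}(D_n)\cap\mathcal{R}(D_n)|\;\leq\;|\mathcal{R}(D_n)|\;=\;|\Upsilon_n|\;=\;|\mathcal{H}(D_n)|,
\]
forcing equality throughout. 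In particular $\mathcal{Q}(D_n)=\mathcal{H}(D_n)$, as desired.

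There is no genuine obstacle here provided the two external enumerations are in hand; the entire argument is a sandwich inequality. The one point worth remarking on in the write-up is the identification $\mathcal{R}(D_n)=R(\lambda(D_n),[\lambda(D_n)])=R(D_n,[D_n])$, which is the passage from the notation of Section 3 (for the external regular subgroup $\lambda(D_n)\leq\operatorname{Perm}(D_n)$) to the notation of \cite{Kohl-Dihedral-2020} (where $D_n$ is regarded as the abstract dihedral group). Once that identification is noted, the proof reduces to the one-line chain of inequalities above.
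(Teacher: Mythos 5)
Your proof is correct and is essentially identical to the paper's: both squeeze $\mathcal{Q}(D_n)$ in the chain $\mathcal{H}(D_n)\subseteq\mathcal{Q}(D_n)\subseteq\mathcal{S}(D_n)\cap\mathcal{R}(D_n)\subseteq\mathcal{R}(D_n)$ and then observe that for $n$ odd the two outer sets have the same cardinality $|\Upsilon_n|$ by the cited enumerations. Your write-up is actually slightly more explicit than the paper's about the notational identification $\mathcal{R}(D_n)=R(D_n,[D_n])$, but the argument is the same.
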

\begin{proof}
For any group $G$ we have 
$$\mathcal{H}(G)\subseteq\mathcal{Q}(G)\subseteq \mathcal{S}(G)\cap\mathcal{R}(G)\subseteq\mathcal{R}(G)$$ 
so for $D_n$ (when $n$ is odd) we note that $|R(D_n,[D_n])|=|\Upsilon_n|=|\mathcal{H}(D_n)|$.
\end{proof}
For $n$ even, we see above that $|R(D_n,[D_n])|$ is dependent on what power of $2$ divides $n$. For $n=4$ we have $\Upsilon_4=\{\pm 1\}$ so that $\mathcal{H}(D_4)=\{\lambda(D_4),\rho(D_4)\}$ and from the above, that $|\mathcal{R}(D_4)|=6$ overall. It turns out to be the case that $\mathcal{S}(D_4)=\mathcal{R}(D_4)=\mathcal{Q}(D_4)$. Specifically, we have
\begin{align*}
\mathcal{Q}(D_4)&=\{\\
\langle (1,x,x^2,x^3)&(t,tx^3,tx^2,tx), (1,t)(x,tx)(x^2,tx^2)(x^3,tx^3) \rangle \leftarrow\lambda(D_4)\\
\langle (1,x,x^2,x^3)&(t,tx,tx^2,tx^3), (1,t)(x,tx^3)(x^2,tx^2)(tx,x^3) \rangle \leftarrow\rho(D_4)\\
\langle (1,tx,x^2,tx^3)&(t,x^3,tx^2,x), (1,t)(x,tx)(x^2,tx^2)(x^3,tx^3) \rangle\\
\langle (1,tx,x^2,tx^3)&(t,x,tx^2,x^3), (1,t)(x,tx^3)(x^2,tx^2)(tx,x^3) \rangle\\
\langle (1,t,x^2,tx^2)&(x,tx,x^3,tx^3), (1,x)(t,tx^3)(x^2,x^3)(tx,tx^2) \rangle\\
\langle (1,t,x^2,tx^2)&(x,tx^3,x^3,tx), (1,x)(t,tx)(x^2,x^3)(tx^2,tx^3) \rangle\}\\
\end{align*}
and one can see fairly easily that one choice of $\pi(\mathcal{Q}(D_4))$ is 
$$\langle(x,tx)(x^3,tx^3), (t,tx,x)(tx^2,tx^3,x^3)\rangle$$
and that $\pi(\mathcal{Q}(G))\cong S_3$. Moreover, one may show that 
$$\langle ( 1,t,x ) ( x^2,tx^2,x^3 ) ( tx,tx^3 )\rangle$$
also parameterizes $\mathcal{Q}$ but that here $\pi(\mathcal{Q}(G))\cong C_6$! It turns out that, in fact, there are 64 choices of subgroups of $\QHG$ that parameterize $\mathcal{Q}$, and that the isomorphism classes of these are evenly divided between $S_3$ and $C_6$.\par
We also note that, unlike cyclic groups, and the $D_4$ example above, we cannot expect $\mathcal{Q}(G)=\mathcal{S}(G)\cap\mathcal{R}(G)$. Indeed, for larger (even) $n$, we see that (generally) $|\mathcal{Q}(G)|<|\mathcal{S}(G)\cap\mathcal{R}(G)|$ and we shall examine these in more detail in the next section. %Later on we shall also consider examples where {\it no} $\pi(\mathcal{Q}(G))$ forms a group, that is $\QHG$ is not a Zappa-Sz\'ep extension of $\operatorname{Hol}(G)$.\par
To look at the $D_n$ for $n$ even case in a bit more detail as we need to dig deeper into the enumeration of $\mathcal{R}(D_n)$ in \cite{Kohl-Dihedral-2020} mentioned above. To avoid excess detail we highlight a few of the essential components of this enumeration.\par
For $D_n=\langle x,t\ |\ x^n=1,t^2=1,xt=tx^{-1}\rangle$ the left regular representation is generated by
\begin{align*}
\lx&=(1,x,x^2,\dots,x^{n-1})(t,\ tx^{n-1},\ \dots, tx)\\
\lt&=(1,\ t)(x,\ tx)\cdots (x^{n-1},\ tx^{n-1})\\
\end{align*}
and, similar to the analysis done in \ref{QC}, the generators $\lx$ and $\lt$ gives rise to 'blocks' that are preserved. And for odd $n$ one has 
\begin{align*}
X_0&=\{1,x,\dots,x^{n-1}\}\\
Y_0&=\{t,tx,\dots,tx^{n-1}\}\\
\end{align*}
while for even $n$ there are two others
\begin{align*}
X_1&=\{1,x^2,\dots,x^{n-2},t,tx^2,\dots,tx^{n-2}\}\\
Y_1&=\{x,x^3,\dots,x^{n-1},tx,tx^3,\dots,tx^{n-1}\}
\end{align*}
and
\begin{align*}
X_2&=\{1,x^2,\dots,x^{n-2},tx,tx^3,\dots,tx^{n-1}\}\\
Y_2&=\{x,x^3,\dots,x^{n-1},t,tx^2,\dots,tx^{n-2}\}
\end{align*}
which are preserved, and this is due to the correspondence between these sets and the number of index 2 subgroups of $D_n$. For $n$ odd, there is just one such subgroup, but for $n$ even there are three, but only one of which is characteristic.\par
And in general, any regular subgroup isomorphic to $D_n$ yields similar block structures with respect to the action of its two generators. What turns out to be the case though is that for any $N$ in $\mathcal{S}(D_n)$ or $\mathcal{R}(D_n)$, $N$ must also give rise to the {\it same} collection of blocks as does $\lambda(D_n)$. Moreover, the unique characteristic subgroup of index 2 of any such $N$ corresponds to exactly one of the possible blocks, $\{X_0,Y_0\}$, $\{X_1,Y_1\}$, or $\{X_2,Y_2\}$. And so the enumeration of $\mathcal{R}(D_n)$ is stratified according to which of these three blocks, corresponds to $N$'s unique characteristic subgroup.\par
When $n$ is odd, the only block structure is $\{X_0,Y_0\}$ and, as seen above, $\mathcal{R}(D_n)=\mathcal{S}(D_n)=\mathcal{H}(D_n)=\mathcal{Q}(D_n)$. But for $n$ even, by \cite[4.7]{Kohl-Dihedral-2020} the $N\in\mathcal{R}(D_n)$ are distributed between different block types, where the exact distribution is based on whether $2$, $4$, or $8$ divide $n$. For $n=6$ for example, $|\mathcal{R}(D_6)|=14$ overall, where $2$ correspond to $\{X_0,Y_0\}$ which constitute $\mathcal{H}(D_6)$ actually, and $6$ corresponding to $\{X_1,Y_1\}$ and $6$ corresponding to $\{X_2,Y_2\}$.\par
To show which of the containments $\mathcal{H}(D_n)\subseteq \mathcal{Q}(D_n)\subseteq\mathcal{S}(D_n)\cap\mathcal{R}(D_n)$ are proper for $D_n$ when $n$ is even, we start by recalling \ref{HinQ}, namely that $\mathcal{H}(G)\subseteq\mathcal{Q}(G)$ and so, in particular $\rho(G)\in\mathcal{Q}(G)$. And so, by the very definition of $\mathcal{Q}(G)$, if a given $N\in\mathcal{S}(G)\cap\mathcal{R}(G)$ is {\it not} normalized by $\rho(G)$ then $N\not\in\mathcal{Q}(G)$. If a given $N$ is such that its characteristic subgroup of order $n$ corresponds to the block structure $\{X_i,Y_i\}$ then we say $N\in R(G,[D_n];W(X_i,Y_i))$. 
\begin{proposition}
\label{QX0}
If $n>4$ is even, and $N\in R(G,[D_n];W(X_i,Y_i))$ for $i=1,2$ then $N$ is not normalized by $\rho(D_n)$.
\end{proposition}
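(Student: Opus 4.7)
\textbf{Proof proposal for \ref{QX0}.}

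The plan is by contradiction: suppose $\rho(D_n)$ normalizes some $N\in R(G,[D_n];W(X_i,Y_i))$ with $i\in\{1,2\}$. Combined with $\lambda(D_n)\leq\operatorname{Norm}_B(N)$ (which holds since $N\in\mathcal{R}(D_n)$), this places $\langle\lambda(D_n),\rho(D_n)\rangle$ inside $\operatorname{Norm}_B(N)$, and the goal is to extract an explicit incompatibility from this double normalization.

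The first step is to invoke the classification from \cite[Section 4]{Kohl-Dihedral-2020}: any such $N\cong D_n$ contains a unique cyclic subgroup $C_N=\langle y\rangle$ of order $n$. This $C_N$ is characteristic in $N$ for every $n\geq 3$ (since $D_n$ has a unique cyclic subgroup of index $2$), and by \cite[4.7]{Kohl-Dihedral-2020} its two orbits on $G$ are precisely $X_i$ and $Y_i$. Because $C_N$ is characteristic in $N$, the assumed normalization of $N$ by $\rho(D_n)$ forces $\rho(g)C_N\rho(g)^{-1}=C_N$ for every $g\in D_n$.

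Next I would examine the action of $\rho(D_n)$ on the block system $\{X_i,Y_i\}$. A direct computation shows that $\rho(x)(1)=x^{n-1}$, which has odd exponent and therefore lies in $Y_i$ for both $i=1,2$, whereas $1\in X_i$; hence $\rho(x)$ swaps the two blocks while $\rho(x^2)$ preserves each block. The stabilizer of a block in $\rho(D_n)$ is thus $\rho(H_i)$, which for $n>4$ is isomorphic to $D_{n/2}$, while the $C_N$-action on each block is by an $n$-cycle. Under the assumed normalization, the relation $\rho(x)y\rho(x)^{-1}=y^k$ for some $k\in U_n$ must hold, and since $\rho(x)$ interchanges $X_i$ and $Y_i$, this equation encodes a combinatorial intertwining of the cycle $y|_{X_i}$ with $y|_{Y_i}$ via the bijection induced by $\rho(x)$.

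The crucial step is then to substitute the explicit generators of $N$ from \cite[4.7]{Kohl-Dihedral-2020}, which parameterize $y$ by a ``twist'' that distinguishes the block type $W(X_i,Y_i)$ ($i=1,2$) from $W(X_0,Y_0)$ together with an element of $\Upsilon_n$, into the intertwining relation $\rho(x)y\rho(x)^{-1}=y^k$ and compare with the analogous relation for $\lambda(x)$ (which preserves each $X_i,Y_i$). Matching exponents modulo $n$ yields a system of congruences that has no solution when $n>4$, because the twist separating $i\in\{1,2\}$ from $i=0$ introduces a phase which $\rho(x)$ cannot absorb into any valid $k\in U_n$. This produces the contradiction.

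The main obstacle will be executing this exponent-matching cleanly: one has to keep careful bookkeeping of how $\rho(x)$ permutes the cycle supports of $y$ inside $X_i$ and $Y_i$, and verify that the resulting equation in $U_n$ has no simultaneous solution. The hypothesis $n>4$ enters precisely here: for $n=4$ the subgroup $H_i$ is Klein four rather than dihedral, and its additional abelian symmetry makes the intertwining constraint consistent, which is already consistent with the explicit enumeration showing $\mathcal{Q}(D_4)=\mathcal{S}(D_4)\cap\mathcal{R}(D_4)$ exhibited above. A secondary subtlety is ensuring that the argument does not already collapse at the level of $\rho(x^2)$ (which preserves blocks), which is why the block-swapping element $\rho(x)$ is the right generator to test against.
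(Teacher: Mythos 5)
Your strategy is exactly the paper's: identify the characteristic cyclic subgroup of order $n$ in $N$, observe that $\rho(x)$ interchanges the blocks $X_i$ and $Y_i$ while $\lambda(x)$ preserves them, write down the intertwining relation forced by normalization, and substitute the explicit cycle parameterization from \cite[4.7]{Kohl-Dihedral-2020} to reach an arithmetic contradiction for $n>4$. So the approach is sound and would succeed. The problem is that the entire mathematical content of the proposition lives in the step you defer: ``matching exponents modulo $n$ yields a system of congruences that has no solution when $n>4$'' is asserted, not derived, and your stated reason for why $n=4$ is exceptional (that the block stabilizer is Klein four rather than dihedral) is a heuristic that plays no role in the actual obstruction.

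Concretely, what is missing is the following chain. Writing $k_X=(t^{a_0}x^{b_0},\dots,t^{a_{n-1}}x^{b_{n-1}})$ and $k_Y=(t^{c_0}x^{d_0},\dots,t^{c_{n-1}}x^{d_{n-1}})$ with the difference equations of \cite[4.7]{Kohl-Dihedral-2020} (all $b_k$ even, all $d_k$ odd, parameterized by $r\in\mathbb{Z}_n-\langle 2\rangle$ and $v\in\Upsilon_n$), normalization by $\rho(x)$ forces $\rho(x)k_Xk_Y\rho(x)^{-1}=k_Y^{q}k_X^{q}$ with $q\in\Upsilon_n$, and aligning the cycle entries gives $d_k-1=b_{qk}$ for all $k$. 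Evaluating at $k=2$ yields $2v=2q$; evaluating at $k=(r-2)v$ yields $b_{rvq-2}=0$, and since only $b_0$ and $b_r$ vanish, one is forced to $r(qv-1)=2$ (the alternative $rvq=2$ being impossible as $r,v,q$ are odd). The pair of conditions $r(qv-1)=2$ and $2q=2v$ has solutions precisely when $n=4$ and none for even $n>4$; that is the contradiction, and it is this computation, not any structural feature of the block stabilizer, that makes $n>4$ necessary. Finally, you treat $i=1$ and $i=2$ uniformly, but the explicit difference equations are stated for $W(X_1,Y_1)$; the case $i=2$ should be reduced to $i=1$ via the automorphism of $D_n$ carrying one block system to the other, as the paper does.
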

\begin{proof}
If $N\in R(D_n,[D_n];W(X_1,Y_1))$ then its characteristic subgroup of order $n$ is generated by a product of disjoint $n$-cycles, $k_Xk_Y$ where $Supp(k_X)=X_i$ and $Supp(k_Y)=Y_i$. In particular, by \cite[4.7]{Kohl-Dihedral-2020} one has that
\begin{align*}
k_X&=(t^{a_0}x^{b_0},\dots,t^{a_{n-1}}x^{b_{n-1}})\\
k_Y&=(t^{c_0}x^{d_0},\dots,t^{c_{n-1}}x^{d_{n-1}})\\
\end{align*}
where $(a_0,b_0)=(0,0)$, $(c_0,d_0)=(0,1)$, $(a_r,b_r)=(1,0)$, $(c_0,d_0)=(0,1)$, $(c_s,d_s)=(1,1)$ for some $r,s\in\mathbb{Z}_n-\langle 2\rangle$ where $s=(r-2)v$ for $v\in\Upsilon_n$. Moreover the following difference equations hold: 
\begin{align*}
a_{2e}&=0& a_{2e+1}&=1\\
c_{2e}&=0& c_{2e+1}&=1\\
b_{2e}&=2e& d_{2e}&=2ev+1\\
b_{r+2e}&=-2e& d_{(r-2)v+2e}&=-2ev+1\\    % r-sv=2 <-> rv-s=2v <-> s=rv-2v = (r-2)v
\end{align*}
for $e\in\mathbb{Z}_{n/2}$ and we observe that all $b_k$ are even and all $d_k$ are odd.\par
As $\rho(x)=(1,x^{n-1},\dots,x)(t,tx^{n-1},\dots,tx)$ then one has that $\rho(x)(X_i)=Y_i$ and $\rho(x)(Y_i)=X_i$ for $i=1,2$ since $\rho(x)(x^b)=x^{b-1}$ and $\rho(x)(tx^b)=tx^{b-1}$. As such, if $\rho(x)$ normalizes $N$ then 
$$\rho(x)k_Xk_Y\rho(x)^{-1}=(\rho(x)k_X\rho(x)^{-1})(\rho(x)k_Y\rho(x)^{-1})=k_Y^{q}k_X^{q}$$
for some $q\in U_n$. By a similar argument to that in \cite[4.7]{Kohl-Dihedral-2020} one has that $q\in\Upsilon_n$, therefore 
\begin{align*}
(t^{a_0}x^{b_0-1},\dots,t^{a_{n-1}}x^{b_{n-1}-1})&=(t^{c_0}x^{d_0},t^{c_q}x^{d_q},\dots,t^{c_{(n-1)q}}x^{d_{(n-1)q}})\\
(t^{c_0}x^{d_0-1},\dots,t^{c_{n-1}}x^{d_{n-1}-1})&=(t^{a_0}x^{b_0},t^{a_q}x^{b_q}\dots,t^{a_{(n-1)q}}x^{b_{(n-1)q}})
\end{align*}
and since $(c_0,d_0-1)=(0,0)=(a_0,b_0)$ then $d_k-1=b_{qk}$ for each $k\in\{0,\dots,n-1\}$. So, for $k=2$ we have $d_2-1=b_{2q}$ where $d_2=2v+1$ and $b_{2q}=2q$ which means $2v=2q$. Similarly $d_{(r-2)v}-1=b_{(r-2)vq}$ where $d_{(r-2)v}=1$ and so $b_{(r-2)vq}=0$. However, we've just seen that $2v=2q$ so $(r-2)vq=rvq-2vq=rvq-2$ which means $b_{(rvq-2)}=0$. However, only $b_0$ and $b_{r}$ are zero which means either $rvq-2=0$ which is impossible since $r$, $v$, and $q$ are odd, or $rqv-2=r$ which means $r(qv-1)=2$. For $n=4$ this permits $(r,v,q)\in\{(1,1,3),(1,3,1),(3,1,3),(3,3,1)\}$, but for $n=4$ the parameters $(r,v)$ give rise to the same $k_Xk_Y$ for $(r,v)=(1,1)$ and $(3,1)$ and similarly one gets equal $k_Xk_Y$ for $(r,v)=(1,3)$ and $(3,3)$, which are exactly the two $N\in R(D_4,[D_4];W(X_1,Y_1))$. For {\it larger} even $n$ there are no $(r,v,q)$ that satisfy the conditions $r(qv-1)=2$ and $2q=2v$. And since each $M\in R(D_n,[D_n];W(X_2,Y_2))$ is the image of an $N\in R(D_n,[D_n];W(X_1,Y_1))$ under an automorphism of $D_n$ then $\rho(x)$ does not normalize any $M\in R(D_n,[D_n];W(X_2,Y_2))$ for $n>4$, although there are two in $R(D_4,[D_4];W(X_2,Y_2))$ which are the images under this automorphism of the two in $R(D_4,[D_4];W(X_1,Y_1))$.
\end{proof}
The other determination to make is what $N$ actually lie in $\mathcal{Q}(D_n)$.
\begin{proposition}
\label{QDn}
For $n>4$ even or odd, $\mathcal{Q}(D_n)=R(D_n,[D_n];W(X_0,Y_0))$.
\end{proposition}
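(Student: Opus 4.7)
The plan is to sandwich $\mathcal{Q}(D_n)$ between $\mathcal{H}(D_n)$ and $R(D_n,[D_n];W(X_0,Y_0))$, and then to argue that the two outer sets already coincide. The bottom inclusion $\mathcal{H}(D_n) \subseteq \mathcal{Q}(D_n)$ is Lemma \ref{HinQ}. In particular $\rho(D_n)\in\mathcal{Q}(D_n)$, which is the lever that drives the upper inclusion.

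For the upper inclusion I would invoke Proposition \ref{QX0}: for $n>4$ even, any $N\in R(D_n,[D_n];W(X_i,Y_i))$ with $i\in\{1,2\}$ fails to be normalized by $\rho(D_n)$, so cannot lie in $\mathcal{Q}(D_n)$. Since the assignment of each $N$ to the block system of its unique characteristic cyclic subgroup of order $n$ partitions $\mathcal{R}(D_n)$, the only stratum that can contribute to $\mathcal{Q}(D_n)$ is $W(X_0,Y_0)$, giving $\mathcal{Q}(D_n)\subseteq R(D_n,[D_n];W(X_0,Y_0))$. For $n$ odd this inclusion is automatic, as $W(X_0,Y_0)$ is the only block system present.

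To close up, I would show $\mathcal{H}(D_n) = R(D_n,[D_n];W(X_0,Y_0))$ by a short cardinality calculation. First, $\mathcal{H}(D_n)\subseteq R(D_n,[D_n];W(X_0,Y_0))$ follows from the explicit description in \cite[Thm.~2.11]{Kohl-Multiple-2014}: the cyclic generator $\rho(x)\phi_{u+1,1}$ sends $t^a x^b$ to $t^a x^{(u+1)a+b-1}$, so preserves both $X_0=\{x^b\}$ and $Y_0=\{tx^b\}$, and hence the characteristic cyclic subgroup of any $N\in\mathcal{H}(D_n)$ has exactly those two orbits. Second, $|\mathcal{H}(D_n)|=|\Upsilon_n|$ by the same theorem. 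Third, extracting from the case analysis of \cite[4.7]{Kohl-Dihedral-2020} the contribution to $|\mathcal{R}(D_n)|$ coming from the $W(X_0,Y_0)$ stratum gives $|R(D_n,[D_n];W(X_0,Y_0))|=|\Upsilon_n|$ in each divisibility regime (the extra factors $\tfrac{n}{2}+2$, $\tfrac{n}{2}+1$, or $n+1$ in Theorem \ref{Kohl-Dihedral} coming entirely from the $W(X_1,Y_1)$ and $W(X_2,Y_2)$ strata; one sanity-checks against the $D_6$ tally recorded in the paragraph before the proposition). Inclusion plus equality of cardinalities yields $\mathcal{H}(D_n)=R(D_n,[D_n];W(X_0,Y_0))$, and the sandwich collapses to the claimed equality.

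The main obstacle is the combinatorial bookkeeping in this final cardinality step, namely confirming that the common factor $|\Upsilon_n|$ visible in Theorem \ref{Kohl-Dihedral} really is the $W(X_0,Y_0)$ contribution in every divisibility case, rather than being smeared across the strata. Once that is read off cleanly from the classification in \cite{Kohl-Dihedral-2020}, the proof reduces to an application of Lemma \ref{HinQ}, Proposition \ref{QX0}, and a size comparison, with no further computation in $\operatorname{Hol}(D_n)$ required.
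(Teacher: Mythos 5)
Your upper bound is fine and matches the paper: Lemma \ref{HinQ} puts $\rho(D_n)$ in $\mathcal{Q}(D_n)$, and Proposition \ref{QX0} then kills the $W(X_1,Y_1)$ and $W(X_2,Y_2)$ strata, so $\mathcal{Q}(D_n)\subseteq R(D_n,[D_n];W(X_0,Y_0))$. The gap is in your closing step. You claim $|R(D_n,[D_n];W(X_0,Y_0))|=|\Upsilon_n|$ in every divisibility regime, so that the sandwich collapses onto $\mathcal{H}(D_n)$. That is false when $8\mid n$: in that case the $W(X_0,Y_0)$ stratum contains $2|\Upsilon_n|$ groups, namely the $|\Upsilon_n|$ members of $\mathcal{H}(D_n)$ (characteristic subgroups generated by $k_Xk_Y$) together with an additional family of $|\Upsilon_n|$ groups whose characteristic subgroups are generated by the $\tilde k_X\tilde k_Y$ attached to the parameter $v=\frac{n}{2}+1$. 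The jump from the factor $\frac{n}{2}+1$ (when $4\mid n$, $8\nmid n$) to $\frac{n}{2}+2$ (when $8\mid n$) in Theorem \ref{Kohl-Dihedral} is exactly this extra $|\Upsilon_n|$ sitting in the $W(X_0,Y_0)$ stratum, not in the other two; and the paper's summary table records $|\mathcal{H}(D_n)|<|\mathcal{Q}(D_n)|$ precisely when $8\mid n$. So $\mathcal{H}(D_n)\ne R(D_n,[D_n];W(X_0,Y_0))$ in that case and your cardinality argument cannot close the sandwich.

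What is actually needed, and what the paper does, is a direct proof of the reverse inclusion $R(D_n,[D_n];W(X_0,Y_0))\subseteq\mathcal{Q}(D_n)$: write down the order-$n$ characteristic subgroups explicitly ($k_X=(1,x,\dots,x^{n-1})$, $k_Y=(t,tx^u,\dots,tx^{(n-1)u})$ for $u\in\Upsilon_n$, plus the $\tilde k_X,\tilde k_Y$ when $8\mid n$) and verify mutual normalization by computation: the $k_Xk_Y$ centralize one another, the $\tilde k_X\tilde k_Y$ centralize one another, and conjugating across the two families raises to the $v$-th power, $k_Xk_Y(\tilde k_X\tilde k_Y)(k_Xk_Y)^{-1}=(\tilde k_X\tilde k_Y)^v$. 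Your argument as written proves the proposition only when $8\nmid n$; to repair it for $8\mid n$ you must replace the size comparison with this explicit normalization check (or some other argument that the $\tilde k$ family lies in $\mathcal{Q}(D_n)$).
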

\begin{proof}
As we saw above, $\mathcal{Q}(D_n)=\mathcal{H}(D_n)=R(D_n,[D_n])$ for $n$ odd, so it must be that $\mathcal{H}(D_n)= R(D_n,[D_n];W(X_0,Y_0))$. For $n$ even, we consider the nature of the order $n$ characteristic subgroup of any $N\in R(D_n,[D_n];W(X_0,Y_0))$. Any such $N$ has a characteristic subgroup of order $n$, generated by a product of disjoint $n$-cycles, $k_X$ and $k_Y$, where $Supp(k_X)=X$ and $Supp(k_Y)=Y$. For the case of $\{X,Y\}=\{X_0,Y_0\}$ we have, from \cite[4.4]{Kohl-Dihedral-2020}, that for any $n$ 
\begin{align*}
k_X&=(1,x,\dots,x^{n-1})\\
k_Y&=(t,tx^u,\dots,tx^{(n-1)u})
\end{align*}
for $u\in\Upsilon_n$, and, if $8|n$ an additional set, 
\begin{align*}
\tilde k_X&=(1,x^{i_{2v}-1},x^{i_{3v}-1}\dots,x^{i_{0}-1})\\
\tilde k_Y&=(t,tx^{i_{(1+u)v}-1},tx^{i_{(1+2u)v}-1},\dots,tx^{i_{(1+(n-1)u)v}-1})
\end{align*}
where $i_0=0$, $i_v=1$, and $i_{(1+e)v}-i_e=1$ for $e\in\mathbb{Z}_n$ where $v=\frac{n}{2}+1$, $u\in \Upsilon_n$. A bit of calculation further reveals that, in fact,  $i_{b+av}=a+b$ for $a=b$ or $a=b+1$ for $a,b\in\{0,\dots,\frac{n}{2}-1\}$. What one shows then is that the $k_Xk_Y$ all centralize each other, as do the $\tilde k_X\tilde k_Y$ mutually centralize each other. Additionally, one can show that $k_Xk_Y(\tilde k_X\tilde k_Y)(k_Xk_Y)^{-1}=(\tilde k_X\tilde k_Y)^v$ and vice versa.
\end{proof}
We note that the $k_Xk_Y$ in the above proof correspond exactly to $\mathcal{H}(D_n)$ which consists of $|\Upsilon_n|$ groups, and that for $8|n$ we have the additional $|\Upsilon_n|$ groups $\tilde k_X\tilde k_Y$ corresponding to the parameter $v=\frac{n}{2}+1$, as $u$ varies over $\Upsilon_n$.
\begin{corollary}
For the group $D_n$, for $n>4$ we have 
$$|\mathcal{Q}(D_n)|=\begin{cases} |\Upsilon_n|\ \text{if $8\ndiv n$}\\ 2|\Upsilon_n|\ \text{if $8|n$}\end{cases}$$
\end{corollary}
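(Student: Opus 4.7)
The plan is to read off the count directly from the preceding proposition, together with the explicit parameterization of elements of $R(D_n,[D_n];W(X_0,Y_0))$ that was introduced in its proof.

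First I would invoke Proposition \ref{QDn} to replace $\mathcal{Q}(D_n)$ by $R(D_n,[D_n];W(X_0,Y_0))$, thereby reducing the problem to enumerating the regular subgroups of $B$ whose characteristic cyclic subgroup of order $n$ has cycle supports equal to $X_0$ and $Y_0$. The proof of \ref{QDn} already exhibits two families of such generating pairs: the standard family $k_X = (1, x, \dots, x^{n-1})$ and $k_Y = (t, tx^u, \dots, tx^{(n-1)u})$, indexed by $u \in \Upsilon_n$, and (only when $8 \mid n$) the additional family $\tilde k_X, \tilde k_Y$ determined by the sequence $i_{b+av} = a+b$ with $v = \frac{n}{2}+1$ and $u \in \Upsilon_n$.

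Next I would observe that each family contributes exactly $|\Upsilon_n|$ distinct subgroups: within the first family, distinct values of $u$ yield distinct $k_Y$ and hence distinct subgroups $\langle k_X k_Y, \cdot \rangle$, and likewise for the second family. This matches the appearance of $|\Upsilon_n|$ as the universal factor in Theorem \ref{Kohl-Dihedral}. I would also note that the first family is precisely $\mathcal{H}(D_n)$, which gives a sanity check against the known cardinality $|\mathcal{H}(D_n)| = |\Upsilon_n|$ from \cite[2.11]{Kohl-Multiple-2014}.

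Finally I would verify that when $8 \mid n$ the two families are disjoint: the second family is characterized by the parameter $v = \frac{n}{2}+1 \neq 1$, which produces a $k_X$ whose increments are not constant equal to $1$, so the corresponding subgroup cannot coincide with any member of the first family (whose $k_X$ is always the standard $n$-cycle $(1, x, \dots, x^{n-1})$). Summing the contributions then gives $|\Upsilon_n|$ when $8 \nmid n$ and $2|\Upsilon_n|$ when $8 \mid n$, as claimed. The main obstacle is really in the bookkeeping of the second family: making sure that distinctness across $u$ and disjointness from the first family both hold, and that no other $v \in \Upsilon_n$ beyond $v=1$ and $v = \frac{n}{2}+1$ produces valid generators in $R(D_n,[D_n];W(X_0,Y_0))$ — but this last point is exactly the content of the classification in \cite[4.4]{Kohl-Dihedral-2020} that \ref{QDn} appeals to, so no new calculation is required.
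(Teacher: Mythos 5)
Your proposal is correct and follows essentially the same route as the paper: the corollary is read off from Proposition \ref{QDn} together with the note immediately preceding it, which records that the $k_Xk_Y$ family is exactly $\mathcal{H}(D_n)$ (contributing $|\Upsilon_n|$ groups) and that the $\tilde k_X\tilde k_Y$ family contributes an additional disjoint $|\Upsilon_n|$ groups precisely when $8\div n$. Your added attention to distinctness within each family and disjointness between them is a reasonable filling-in of bookkeeping the paper leaves implicit, but it is not a different argument.
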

The other determination to make is whether there exists a $\pi(\mathcal{Q}(D_n))$ that is a group.
\begin{proposition}
For all $n$, there exists a $\pi(\mathcal{Q}(D_n))$ that is an elementary abelian $2$-group.
\end{proposition}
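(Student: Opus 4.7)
The plan is to construct, for each $n$, explicit involutions in $B=\operatorname{Perm}(D_n)$ that pairwise commute and whose conjugation action on $\lambda(D_n)$ sweeps out all of $\mathcal{Q}(D_n)$. By the preceding corollary, for $n>4$ we have $|\mathcal{Q}(D_n)|=|\Upsilon_n|$ if $8\nmid n$ and $|\mathcal{Q}(D_n)|=2|\Upsilon_n|$ if $8\mid n$; in either case this is a power of $2$, which is consistent with the claimed elementary abelian structure. (The case $n=4$, with $|\mathcal{Q}(D_4)|=6$, is anomalous and must be understood as excluded, since no $6$-element $\pi(\mathcal{Q}(D_4))$ can be a $2$-group; the example in Section 7 exhibits $S_3$ and $C_6$ as the only isomorphism classes that appear.)

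For $n$ odd and for $n$ even with $8\nmid n$, I use the description of $\mathcal{Q}(D_n)$ from \ref{QDn}: every $N\in\mathcal{Q}(D_n)$ has its unique characteristic order-$n$ subgroup generated by $k_Xk_Y$ with $k_X=(1,x,\dots,x^{n-1})$ and $k_Y=(t,tx^u,\dots,tx^{(n-1)u})$ for some $u\in\Upsilon_n$. For each such $u$, define $\tau_u\in B$ to fix the block $X_0=\{1,x,\dots,x^{n-1}\}$ pointwise and to act on $Y_0=\{t,tx,\dots,tx^{n-1}\}$ by $\tau_u(tx^i)=tx^{iu}$. Since $u^2\equiv 1 \pmod n$, each $\tau_u$ is an involution, and $\tau_u\tau_v=\tau_{uv}$ is immediate on $Y_0$ and trivial on $X_0$, so $\{\tau_u:u\in\Upsilon_n\}$ is a subgroup of $B$ isomorphic to $\Upsilon_n$, hence elementary abelian. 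A direct conjugation check, using $\tau_u\lambda(x)\tau_u^{-1}$ and $\tau_u\lambda(t)\tau_u^{-1}$, then shows that $\tau_u$ carries $\lambda(D_n)$ to the member of $\mathcal{Q}(D_n)$ with parameter $u$, so $\{\tau_u\}$ parameterizes $\mathcal{Q}(D_n)$.

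For $8\mid n$, there is an additional family $\langle\tilde k_X\tilde k_Y\rangle$ indexed by $v=\tfrac{n}{2}+1$, and one needs a single extra involution $\sigma$ that realizes the transition from $\langle k_Xk_Y\rangle$ to $\langle\tilde k_X\tilde k_Y\rangle$. The idea is to read off $\sigma$ from the reindexing $i_{b+av}=a+b$ ($a=b$ or $a=b+1$) used in the proof of \ref{QDn}: let $\sigma$ relabel the elements $x^i$ and $tx^i$ according to this formula on each of $X_0$ and $Y_0$. That $\sigma^2=1$ follows from $v^2\equiv 1\pmod n$, and $\sigma\tau_u=\tau_u\sigma$ for every $u\in\Upsilon_n$ reduces to the commutativity of multiplication by $u$ and by $v$ in $U_n$, once one observes that $\sigma$ acts only on the second coordinate of the decomposition $i=b+av$ while $\tau_u$ multiplies the whole index by $u$. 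Then $\langle\sigma\rangle\times\{\tau_u:u\in\Upsilon_n\}\cong C_2\times\Upsilon_n$ is an elementary abelian $2$-group of order $2|\Upsilon_n|$ parameterizing $\mathcal{Q}(D_n)$.

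The main obstacle is writing $\sigma$ down explicitly in cycle notation and verifying both that it conjugates $k_Xk_Y$ to $\tilde k_X\tilde k_Y$ and that it commutes with each $\tau_u$. The key technical input is the identity $(k_Xk_Y)(\tilde k_X\tilde k_Y)(k_Xk_Y)^{-1}=(\tilde k_X\tilde k_Y)^v$ from the proof of \ref{QDn}, which, combined with $v\in\Upsilon_n$, is what ensures that the natural candidate for $\sigma$ has order $2$ rather than some larger $2$-power; any slip in the bookkeeping of the reindexing $i\mapsto i_{b+av}$ produces either a non-involution or an element that fails to commute with some $\tau_u$, so this is the step that requires the most care.
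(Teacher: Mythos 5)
Your proposal takes essentially the same route as the paper: for $8\nmid n$ the parameter set is the elementary abelian group $M_n=\{\tau_u\mid u\in\Upsilon_n\}$ with $\tau_u(x^i)=x^i$, $\tau_u(tx^i)=tx^{ui}$ (which the paper imports from the splitting of $N\!\operatorname{Hol}(D_n)$ in \cite{Kohl-Multiple-2014}), and for $8\mid n$ one adjoins a single commuting involution built from the reindexing $i_{b+av}$ of \ref{QDn}, exactly as in the paper's argument. Your observation that $n=4$ must be excluded (since $|\mathcal{Q}(D_4)|=6$ cannot be parameterized by a $2$-group) is correct and is a point the paper's own proof silently sidesteps by relying on \ref{QDn}, which is stated only for $n>4$.
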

\begin{proof}
 As we observed above, when $8\ndiv n$ we have that $\mathcal{Q}(D_n)=\mathcal{H}(D_n)$. And for all $n$, by \cite{Kohl-Multiple-2014}, $N\!Hol(D_n)$ is a split extension of $Hol(D_n)$ by the group $M_n=\{\tau_u\ |\ u\in\Upsilon_n\}$ where $\tau_u(x^i)=x^i$ and $\tau_u(tx^i)=tx^{ui}$. So for $8\ndiv n$, we have that $Q\!Hol(D_n)=N\!Hol(D_n)$ where $M_n$ is a $\pi(\mathcal{Q}(D_n))$ and is an elementary abelian $2$-group.\par
When $8\div n$ we saw in \ref{QDn} that $\mathcal{Q}(D_n)$ consists of the $|\Upsilon_n|$ members of $\mathcal{H}(D_n)$, determined by their order $n$ characteristic subgroups:
\begin{align*}
k_X&=(1,x,\dots,x^{n-1})\\
k_Y&=(t,tx^u,\dots,tx^{(n-1)u})
\end{align*}
for each $u\in\Upsilon_n$, together with an addition $|\Upsilon_n|$ members with characteristic subgroups with the following generators:
\begin{align*}
\tilde k_X&=(1,x^{i_{2v}-1},x^{i_{3v}-1}\dots,x^{i_{0}-1})\\
\tilde k_Y&=(t,tx^{i_{(1+u)v}-1},tx^{i_{(1+2u)v}-1},\dots,tx^{i_{(1+(n-1)u)v}-1})
\end{align*}
where $v=\frac{n}{2}+1$ with the subscripts $i$ having been described above. What one can show is that the mapping
\begin{align*}
x^c&\mapsto x^{i_{(c+1)v}-1}\\
tx^{du}&\mapsto tx^{i_{(1+du)v}-1}\\
\end{align*}
conjugates each $k_X$ to $\tilde k_X$ and $k_Y$ to $\tilde k_Y$ and has order 2. And the union of these with $M_n$ described above yield a $\pi(\mathcal{Q}(D_n))$ which is a group, and indeed an elementary abelian $2$-group of order exactly twice that of $M_n$.
\end{proof}
The above illustrates when $\mathcal{H}(D_n)$ is a proper subset of $\mathcal{Q}(D_n)$. As to the containment $\mathcal{Q}(D_n)\subseteq \mathcal{S}(D_n)\cap\mathcal{R}(D_n)$, we observe that for $D_n$ ($n$ even) this containment is {\it always} proper. This is established by the following.
\begin{proposition}
For $n>4$, there exist $N\in R(D_n,[D_n];W(X_1,Y_1)$ that do not normalize $\lambda(D_n)$, that is $N\not\leq \operatorname{Hol}(D_n)$.
\end{proposition}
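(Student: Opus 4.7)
My plan is to exhibit a single explicit $N\in R(D_n,[D_n];W(X_1,Y_1))$ whose characteristic cyclic generator $k_Xk_Y$ already fails to lie in $\operatorname{Hol}(D_n)$; this suffices, because $N\not\leq \operatorname{Hol}(D_n)$ as soon as one element of $N$ escapes $\operatorname{Hol}(D_n)$, and so no separate analysis of the involution generator of $N$ is required.

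First I would specialize the parameterization of $R(D_n,[D_n];W(X_1,Y_1))$ recalled in the proof of \ref{QX0} by taking $r=1$ and $v=n-1$. This is legitimate: $(n-1)^2\equiv 1\pmod n$ places $v$ in $\Upsilon_n$, while $r=1$ is odd and so lies in $\mathbb{Z}_n-\langle 2\rangle$. Substituting these values into the difference equations for $a,b,c,d$ from \ref{QX0} and reading off the first few entries of the cycles yields, after careful but routine indexing, $k_Xk_Y(1)=t$, $k_Xk_Y(x)=tx$, and $k_Xk_Y(x^2)=tx^{-2}$.

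Second I would test membership of $k_Xk_Y$ in $\operatorname{Hol}(D_n)=\rho(D_n)\operatorname{Aut}(D_n)$ by supposing $k_Xk_Y=\rho(g)\alpha$. Evaluating at $1$ forces $g=t$, so $k_Xk_Y(h)=\alpha(h)t$ for every $h$. Evaluating at $x$ then forces $\alpha(x)=tx\cdot t^{-1}=x^{-1}$, after which the consistency check at $x^2$ demands $\alpha(x^2)t=x^{-2}t=tx^{2}$; but the actual value is $tx^{-2}$, so consistency fails precisely when $x^4\neq 1$ in $D_n$, i.e., exactly when $n>4$. Hence no $\alpha\in\operatorname{Aut}(D_n)$ can realize $k_Xk_Y$, and $k_Xk_Y\notin\operatorname{Hol}(D_n)$.

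The only delicate part is the index bookkeeping needed to extract those three values of $k_Xk_Y$ from \ref{QX0}, since $k_X$ and $k_Y$ are indexed through different odd positions $r$ and $s=(r-2)v$ and all arithmetic is modulo $n$; once those values are in hand, the entire incompatibility collapses to the single congruence $2\not\equiv -2\pmod n$, which is exactly the hypothesis $n>4$. The sharpness at $n=4$ is also reassuring, since a direct check with the same $(r,v)$ shows the forced $\alpha$ is consistent there, matching the fact that $\mathcal{Q}(D_4)=\mathcal{S}(D_4)\cap\mathcal{R}(D_4)$ seen earlier.
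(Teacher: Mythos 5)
Your proof is correct, and it takes a genuinely different route from the paper's. The paper argues at the level of the full two-parameter family $N_{r,v}$: it determines which $v\in\Upsilon_n$ permit $N_{r,v}$ to normalize the characteristic subgroup $\langle\lambda(x)\rangle$ (namely $v=1$, together with $v=\frac{n}{2}+1$ when $4\mid n$), and then observes that $\Upsilon_n$ contains other units once $n>4$. You instead fix the single witness $(r,v)=(1,n-1)$ and test membership of the generator $k_Xk_Y$ of its characteristic cyclic subgroup in $\operatorname{Hol}(D_n)=\rho(D_n)\operatorname{Aut}(D_n)$ directly: evaluation at $1$ pins down $\rho(g)$ with $g=t$, evaluation at $x$ forces $\alpha(x)=x^{-1}$, and evaluation at $x^2$ produces the requirement $x^{2}=x^{-2}$, which fails exactly for $n>4$. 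I checked your index bookkeeping against the difference equations of \ref{QX0}: with $r=1$ and $v\equiv -1$ one gets $s=(r-2)v=1$, $b_1=0$, $b_2=2$, $b_3=-2$, $d_0=d_1=1$, so the three claimed values $k_Xk_Y(1)=t$, $k_Xk_Y(x)=tx$, $k_Xk_Y(x^2)=tx^{-2}$ are right; moreover $n-1\in\Upsilon_n$ always, and $n-1\notin\{1,\frac{n}{2}+1\}$ for $n>4$, so your witness is consistent with the paper's classification. The trade-off is that the paper's argument yields the sharper information of which $N_{r,v}$ do lie in $\operatorname{Hol}(D_n)$, which feeds into the surrounding discussion of $\mathcal{S}(D_n)\cap\mathcal{R}(D_n)$, whereas yours is more self-contained and elementary, needs only one element of one group, and makes the boundary role of $n=4$ completely explicit.
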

\begin{proof}
We refer back to the enumeration of the characteristic order $n$ subgroups $\langle k_Xk_Y\rangle$ of $N\in R(D_n,[D_n];W(X_1,Y_1)$ as described in \ref{QX0}. In particular $k_X=(t^{a_0}x^{b_0},\dots,t^{a_{n-1}}x^{b_{n-1}})$ and $k_Y=(t^{c_0}x^{d_0},\dots,t^{c_{n-1}}x^{d_{n-1}})$ where the exponents are subject to the difference equations also given \ref{QX0} parameterized by $r\in\mathbb{Z}_n-\langle 2\rangle$ and $v\in\Upsilon_n$. So suppose $N=N_{r,v}\in R(D_n,[D_n];W(X_0,Y_0)$ corresponds to a pair $(r,v)\in \mathbb{Z}_n-\langle 2\rangle\times \Upsilon_n$. If $4\ndiv n$ then $N_{r,v}$ normalizes $\langle\lambda(x)\rangle$ only if $v=1$. And if $4|n$ then $N_{r,v}$ normalizes $\langle\lambda(x)\rangle$ if $v=1,\frac{n}{2}+1$. And since $\Upsilon_n$ contains other units besides these (for $n>4$) in either case, the result follows.
\end{proof}
%\end{comment}
\section{ $|\mathcal{H}(G)|\leq |\mathcal{Q}(G)|\leq |\mathcal{S}(G)\cap\mathcal{R}(G)|$}
The two containments $\mathcal{H}(G)\subseteq \mathcal{Q}(G)\subseteq \mathcal{S}(G)\cap\mathcal{R}(G)$ may or may not be proper for a given group $G$. We have demonstrated the following:
\begin{align*}
|\mathcal{H}(C_{p^n})|&<|\mathcal{Q}(C_{p^n})|=|\mathcal{S}(C_{p^n})\cap\mathcal{R}(C_{p^n})|\text{\ \ \ $p$ odd}\\
|\mathcal{H}(C_{2^n})|&<|\mathcal{Q}(C_{2^n})|=|\mathcal{S}(C_{2^n})\cap\mathcal{R}(C_{2^n})|\text{  $1\leq n\leq 4$}\\
|\mathcal{H}(C_{2^n})|&<|\mathcal{Q}(C_{2^n})|<|\mathcal{S}(C_{2^n})\cap\mathcal{R}(C_{2^n})|\text{  $n\geq 5$}\\
|\mathcal{H}(D_n)|&=|\mathcal{Q}(D_n)|=|\mathcal{S}(D_n)\cap\mathcal{R}(D_n)|\text{ $n$ odd}\\
|\mathcal{H}(D_n)|&=|\mathcal{Q}(D_n)|<|\mathcal{S}(D_n)\cap\mathcal{R}(D_n)|\text{ $n$ even, $8\ndiv n$ }\\
|\mathcal{H}(D_n)|&<|\mathcal{Q}(D_n)|<|\mathcal{S}(D_n)\cap\mathcal{R}(D_n)|\text{ $n$ even, $8|n$ }\\
\end{align*}
As the $D_4$ example highlights, $Q\!\operatorname{Hol}(D_4)$ is a Zappa-Sz\'ep extension of $\operatorname{Hol}(D_4)$ with respect to complements $\pi(\mathcal{Q}(D_4))$ of different isomorphism classes, $C_6$ and $S_3$.\par
What we wish to do in this section is to give tables illustrating not only the disparity in sizes between $|\mathcal{H}(G)|$, $|\mathcal{Q}(G)|$, and $|\mathcal{S}(G)\cap\mathcal{R}(G)|$, but also indicate the different isomorphism classes of parameter groups $\pi(\mathcal{Q}(G))$ that may arise, and also to illustrate cases where $\QHG$ is not a Zappa-Sz\'ep extension of $\operatorname{Hol}(G)$. We include all groups of order at most 40, except for $n=32$, excluding $D_n$ for $n$ odd, and most cyclic groups. But we do include the cyclic groups $C_{4}$, $C_{16}$ and also $C_{64}$ so as to address the 'opening' left by \ref{Q2} regarding whether $\operatorname{QHol}(C_{2^n})$ is a group for $n$ even. We also include $D_n$ for $n$ even as we did not work out the size of $\mathcal{S}(D_n)\cap\mathcal{R}(D_n)$ in this case. We highlight (in gray) those cases where each of the two containments are proper. \par
We shall utilize GAP to compute these tables. Indeed, the examples of the two groups mentioned earlier of order 40 where $\NHG$ is not a split extension were also computed using GAP. And since we have used GAP to generate the following tables, we adopt the notational conventions they use, in particular writing $D_{2n}$ for the $n$-th dihedral group for example.\par
\hskip-1.2in\begin{tabular}{|c|c|c|c|c|}\hline
$G$ &  $|\mathcal{S}\cap\mathcal{R}|$ & $|\mathcal{Q}|$ & $|\mathcal{H}|$ & $\pi(\mathcal{Q})$ \\ \hline\hline
$C_{4}$ & $1$ & $1$ & $1$ & $1$ \\ \hline
$C_{2} \times C_{2}$ & $1$ & $1$ & $1$ & $1$ \\ \hline
$S_{3}$ & $2$ & $2$ & $2$ & $C_{2}$ \\ \hline
$C_{4} \times C_{2}$ & $8$ & $2$ & $2$ & $C_{2}$ \\ \hline
$D_{8}$ & $6$ & $6$ & $2$ & $S_{3}$, $C_{6}$ \\ \hline
$Q_{8}$ & $2$ & $2$ & $2$ & $C_{2}$ \\ \hline
$C_{2} \times C_{2} \times C_{2}$ & $8$ & $1$ & $1$ & $1$ \\ \hline
$C_{3} \times C_{3}$ & $9$ & $1$ & $1$ & $1$ \\ \hline
$C_{3} \rtimes C_{4}$ & $2$ & $2$ & $2$ & $C_{2}$ \\ \hline
$A_{4}$ & $6$ & $2$ & $2$ & $C_{2}$ \\ \hline
$D_{12}$ & $8$ & $2$ & $2$ & $C_{2}$ \\ \hline
$C_{6} \times C_{2}$ & $1$ & $1$ & $1$ & $1$ \\ \hline
$C_{16}$  & $4$ & $4$ & $2$ & $C_{4}$,$C_2\times C_2$\\ \hline
$C_{4} \times C_{4}$ & $24$ & $24$ & $1$ & $C_{4} \times S_{3}$, $(C_{6} \times C_{2}) \rtimes C_{2}$, $C_{3} \times D_{8}$, $S_{4}$, $C_{2} \times A_{4}$, $C_{2} \times C_{2} \times S_{3}$ \\ \hline
$(C_{4} \times C_{2}) \rtimes C_{2}$ & $76$ & $4$ & $4$ & $C_{2} \times C_{2}$ \\ \hline
$C_{4} \rtimes C_{4}$ & $72$ & $72$ & $8$ & $C_{3} \times S_{4}$, $(C_{3} \times A_{4}) \rtimes C_{2}$ \\ \hline
$C_{8} \times C_{2}$ & $10$ & $4$ & $4$ & $C_{2} \times C_{2}$ \\ \hline
$C_{8} \rtimes C_{2}$ & $10$ & $4$ & $4$ & $C_{2} \times C_{2}$ \\ \hline
$D_{16}$ & $16$ &\cg $8$ & $4$ & $C_{4} \times C_{2}$, $D_{8}$, $C_{2} \times C_{2} \times C_{2}$ \\ \hline
$QD_{16}$ & $32$ & $16$ & $16$ & $C_{2} \times D_{8}$ \\ \hline
$Q_{16}$ & $16$ &\cg $8$ & $4$ & $C_{4} \times C_{2}$, $D_{8}$, $C_{2} \times C_{2} \times C_{2}$ \\ \hline
$C_{4} \times C_{2} \times C_{2}$ & $146$ & $1$ & $1$ & $1$ \\ \hline
$C_{2} \times D_{8}$ & $198$ &\cg $6$ & $2$ & $S_{3}$, $C_{6}$ \\ \hline
$C_{2} \times Q_{8}$ & $66$ & $2$ & $2$ & $C_{2}$ \\ \hline
$(C_{4} \times C_{2}) \rtimes C_{2}$ & $224$ & $224$ & $2$ & $\QHG$ is not a Zappa-Sz\'ep Extension of $\operatorname{Hol}(G)$ \\ \hline
$C_{2} \times C_{2} \times C_{2} \times C_{2}$ & $106$ & $1$ & $1$ & $1$ \\ \hline
\end{tabular}
\par
\hskip-0.66in\begin{tabular}{|c|c|c|c|c|}\hline
$G$ &  $|\mathcal{S}\cap\mathcal{R}|$ & $|\mathcal{Q}|$ & $|\mathcal{H}|$ & $\pi(\mathcal{Q})$ \\ \hline\hline

$C_{3} \times S_{3}$ & $7$ & $2$ & $2$ & $C_{2}$ \\ \hline
$(C_{3} \times C_{3}) \rtimes C_{2}$ & $38$ & $2$ & $2$ & $C_{2}$ \\ \hline
$C_{6} \times C_{3}$ & $9$ & $1$ & $1$ & $1$ \\ \hline
$C_{5} \rtimes C_{4}$ & $2$ & $2$ & $2$ & $C_{2}$ \\ \hline
$C_{5} \rtimes C_{4}$ & $7$ & $2$ & $2$ & $C_{2}$ \\ \hline
$D_{20}$ & $12$ & $2$ & $2$ & $C_{2}$ \\ \hline
$C_{10} \times C_{2}$ & $1$ & $1$ & $1$ & $1$ \\ \hline
$C_{7} \rtimes C_{3}$ & $9$ & $2$ & $2$ & $C_{2}$ \\ \hline
$C_{3} \rtimes C_{8}$ & $4$ & $4$ & $4$ & $C_{2} \times C_{2}$ \\ \hline
$SL(2,3)$ & $6$ & $2$ & $2$ & $C_{2}$ \\ \hline
$C_{3} \rtimes Q_{8}$ & $16$ & $4$ & $4$ & $C_{2} \times C_{2}$ \\ \hline
$C_{4} \times S_{3}$ & $32$ & $8$ & $8$ & $C_{2} \times C_{2} \times C_{2}$ \\ \hline
$D_{24}$ & $16$ & $4$ & $4$ & $C_{2} \times C_{2}$ \\ \hline
$C_{2} \times (C_{3} \rtimes C_{4})$ & $20$ & $4$ & $4$ & $C_{2} \times C_{2}$ \\ \hline
$(C_{6} \times C_{2}) \rtimes C_{2}$ & $32$ & $8$ & $8$ & $C_{2} \times C_{2} \times C_{2}$ \\ \hline
$C_{12} \times C_{2}$ & $8$ & $2$ & $2$ & $C_{2}$ \\ \hline
$C_{3} \times D_{8}$ & $6$ & $6$ & $2$ & $S_{3}$, $C_{6}$ \\ \hline
$C_{3} \times Q_{8}$ & $2$ & $2$ & $2$ & $C_{2}$ \\ \hline
$S_{4}$ & $5$ & $2$ & $2$ & $C_{2}$ \\ \hline
$C_{2} \times A_{4}$ & $9$ & $2$ & $2$ & $C_{2}$ \\ \hline
$C_{2} \times C_{2} \times S_{3}$ & $44$ & $2$ & $2$ & $C_{2}$ \\ \hline
$C_{6} \times C_{2} \times C_{2}$ & $8$ & $1$ & $1$ & $1$ \\ \hline
$C_{5} \times C_{5}$ & $25$ & $1$ & $1$ & $1$ \\ \hline
$C_{9} \times C_{3}$ & $33$ &\cg $3$ & $1$ & $C_{3}$ \\ \hline
$(C_{3} \times C_{3}) \rtimes C_{3}$ & $78$ & $2$ & $2$ & $C_{2}$ \\ \hline
$C_{9} \rtimes C_{3}$ & $63$ &\cg $6$ & $2$ & $S_{3}$, $C_{6}$ \\ \hline
$C_{3} \times C_{3}\times C_{3}$ & $339$ & $1$ & $1$ & $1$ \\ \hline
$C_{7} \rtimes C_{4}$ & $2$ & $2$ & $2$ & $C_{2}$ \\ \hline
$D_{28}$ & $16$ & $2$ & $2$ & $C_{2}$ \\ \hline
$C_{14} \times C_{2}$ & $1$ & $1$ & $1$ & $1$ \\ \hline
$C_{5} \times S_{3}$ & $2$ & $2$ & $2$ & $C_{2}$ \\ \hline
$C_{3} \times D_{10}$ & $2$ & $2$ & $2$ & $C_{2}$ \\ \hline
\end{tabular}
\par%\newpage
\hskip-0.86in\begin{tabular}{|c|c|c|c|c|}\hline
$G$ &  $|\mathcal{S}\cap\mathcal{R}|$ & $|\mathcal{Q}|$ & $|\mathcal{H}|$ & $\pi(\mathcal{Q})$ \\ \hline\hline
$C_{9} \rtimes C_{4}$ & $2$ & $2$ & $2$ & $C_{2}$ \\ \hline
$(C_{2} \times C_{2}) \rtimes C_{9}$ & $18$ &\cg $6$ & $2$ & $S_{3}$, $C_{6}$ \\ \hline
$D_{36}$ & $20$ & $2$ & $2$ & $C_{2}$ \\ \hline
$C_{18} \times C_{2}$ & $3$ & $3$ & $1$ & $C_{3}$ \\ \hline
$C_{3} \times (C_{3} \rtimes C_{4})$ & $7$ & $2$ & $2$ & $C_{2}$ \\ \hline
$(C_{3} \times C_{3}) \rtimes C_{4}$ & $38$ & $2$ & $2$ & $C_{2}$ \\ \hline
$C_{12} \times C_{3}$ & $9$ & $1$ & $1$ & $1$ \\ \hline
$(C_{3} \times C_{3}) \rtimes C_{4}$ & $11$ & $2$ & $2$ & $C_{2}$ \\ \hline
$S_{3} \times S_{3}$ & $55$ &\cg $4$ & $2$ & $C_{4}$, $C_{2} \times C_{2}$ \\ \hline
$C_{3} \times A_{4}$ & $42$ &\cg $6$ & $2$ & $S_{3}$, $C_{6}$ \\ \hline
$C_{6} \times S_{3}$ & $19$ & $2$ & $2$ & $C_{2}$ \\ \hline
$C_{2} \times ((C_{3} \times C_{3}) \rtimes C_{2})$ & $56$ & $2$ & $2$ & $C_{2}$ \\ \hline
$C_{6} \times C_{6}$ & $9$ & $1$ & $1$ & $1$ \\ \hline
$C_{13} \rtimes C_{3}$ & $15$ & $2$ & $2$ & $C_{2}$ \\ \hline
$C_{5} \rtimes C_{8}$ & $4$ & $4$ & $4$ & $C_{2} \times C_{2}$ \\ \hline
$C_{5} \rtimes C_{8}$ & $14$ & $4$ & $4$ & $\QHG$ is not a Zappa-Sz\'ep Extension of $\operatorname{Hol}(G)$ \\ \hline
$C_{5} \rtimes Q_{8}$ & $24$ & $4$ & $4$ & $C_{2} \times C_{2}$ \\ \hline
$C_{4} \times D_{10}$ & $48$ & $8$ & $8$ & $C_{2} \times C_{2} \times C_{2}$ \\ \hline
$D_{40}$ & $24$ & $4$ & $4$ & $C_{2} \times C_{2}$ \\ \hline
$C_{2} \times (C_{5} \rtimes C_{4})$ & $28$ & $4$ & $4$ & $C_{2} \times C_{2}$ \\ \hline
$(C_{10} \times C_{2}) \rtimes C_{2}$ & $48$ & $8$ & $8$ & $C_{2} \times C_{2} \times C_{2}$ \\ \hline
$C_{20} \times C_{2}$ & $8$ & $2$ & $2$ & $C_{2}$ \\ \hline
$C_{5} \times D_{8}$ & $6$ & $6$ & $2$ & $S_{3}$, $C_{6}$ \\ \hline
$C_{5} \times Q_{8}$ & $2$ & $2$ & $2$ & $C_{2}$ \\ \hline
$C_{2} \times (C_{5} \rtimes C_{4})$ & $24$ & $4$ & $4$ & $\QHG$ is not a Zappa-Sz\'ep Extension of $\operatorname{Hol}(G)$ \\ \hline
$C_{2} \times C_{2} \times D_{10}$ & $68$ & $2$ & $2$ & $C_{2}$ \\ \hline
$C_{10} \times C_{2} \times C_{2}$ & $8$ & $1$ & $1$ & $1$ \\ \hline
$C_{64}$   & $8$ & $8$ & $2$ & $C_{8}$, $Q_8$ \\ \hline 
\end{tabular}\par
We observe many examples in the above tables where $\mathcal{Q}(G)$ is a proper subset of $\mathcal{S}(G)\cap\mathcal{R}(G)$ and that it is frequently (but not always) the case that $\mathcal{Q}(G)=\mathcal{H}(G)$. We also note the intriguing entry for the group $(C_4\times C_2)\rtimes C_2$, of order $16$ (number 13 in the AllSmallGroups(16) list) for which $\QHG$ is not a Zappa-Sz\'ep product, even though $\NHG$ {\it is} a Zappa-Sz\'ep product, and, in fact, a semi-direct product. What is also unexpected is the size, which is much larger than $|\mathcal{Q}(G)|$ for other low order groups, including those of the same order. We note, of course, that there are instances where $\QHG=\NHG$ where $\NHG$ is not a Zappa-Sz\'ep extension, so, in fact, not a split extension of $\operatorname{Hol}(G)$.
\section{Further Questions}
We close with some questions which are yet unanswered, some of which are prompted by the data in the above tables. The primary question, as mentioned earlier, is that when $\mathcal{Q}(G)$ is a proper subset of $\mathcal{S}(G)\cap\mathcal{R}(G)$, does there always exist a $\pi(\mathcal{Q}(G))$ which is conj-closed, i.e. Is $\QHG$ always a group. As posited earlier, it is our conjecture that this is indeed always true. Beyond this, we pose the following questions.\par
\begin{itemize}
\item For abelian groups $G$, when is $\mathcal{Q}(G)=\{\lG\}$?
\item In particular, for elementary abelian groups $G$, is $\mathcal{Q}(G)=\{\lG\}$?
\item When is $\mathcal{Q}(G)=\mathcal{S}(G)\cap\mathcal{R}(G)$ ?
\item When is $\mathcal{Q}(G)$ properly larger vs. equal to $\mathcal{H}(G)$?
\item What are the conditions which make $\QHG$ a Zappa-Sz\'ep product?
\item When $\QHG$ is a Zappa-Sz\'ep product, what are the possible isomorphism classes of $\pi(\mathcal{Q}(G))$?
\item As $\mathcal{Q}(G)\subseteq R(\lambda(G),[G])$ then what are the implications for Hopf-Galois extensions arising from $N\in\mathcal{Q}(G)$ and also the braces $B$ with additive and circle groups both isomorphic to $G$? 
\end{itemize}
\bibliography{quasi}
\bibliographystyle{plain}
\end{document}